\documentclass[reqno]{amsart}
\usepackage{graphicx}
\usepackage{amsmath, amsthm, amssymb, dsfont}
\usepackage{thmtools}
\usepackage{mathtools}
\usepackage{centernot}
\usepackage{amsfonts}
\usepackage{geometry}
\usepackage{cancel}
\usepackage{placeins}
\usepackage{tikz}
\usepackage{pgfplots}
\pgfplotsset{compat=1.18}
\usepackage{MnSymbol}
\usepackage[parfill]{parskip}
\usepackage{comment}
\usepackage{hyperref}
\usepackage{cleveref} 
\usepackage{mathrsfs}
\usepackage{thm-restate}
\usepackage{esint}
\usepackage{enumitem}
\usepackage{tcolorbox}
\usepackage{float}
\usepackage{caption}
\usepackage{framed}
\usepackage{empheq}
\newtheorem{theorem}{Theorem}[section]
\newtheorem*{theorem*}{Theorem}
\newtheorem{lemma}[theorem]{Lemma}

\newtheorem{proposition}[theorem]{Proposition}
\newtheorem*{proposition*}{Proposition}

\newtheorem*{claim*}{Claim}
\newtheorem{corollary}[theorem]{Corollary}

\newtheorem{example}[theorem]{Example}
\theoremstyle{definition}
\newtheorem{definition}[theorem]{Definition}
\newtheorem{remark}[theorem]{Remark}

\newcommand{\sgn}{\operatorname{sgn}}

\newcommand{\thmendspace}{\vspace{-\belowdisplayskip}}
\newcommand{\norm}[1]{\left \lVert #1 \right \rVert}
\newcommand{\R}{\mathbb{R}}

\newcommand{\Q}{\mathbb{Q}}
\newcommand{\N}{\mathbb{N}}

\DeclareMathOperator\supp{supp}
\setlist[itemize,enumerate]{nosep}

\makeatletter
\renewcommand*\env@matrix[1][*\c@MaxMatrixCols c]{%
  \hskip -\arraycolsep
  \let\@ifnextchar\new@ifnextchar
  \array{#1}}
\makeatother

\begin{document}

\title[Existence for non-semiconvex interaction potentials]{On one-dimensional pressureless Euler--Poisson systems: existence for non-semiconvex interaction potentials}

\author[Xie]{Andrew Xie}
\address{Andrew Xie\\Rutgers University -- New Brunswick}
\email{ax52@scarletmail.rutgers.edu}
\author[Alayande]{Emmanuel Alayande}
\address{Emmanuel Alayande\\University of Florida}
\email{e.alayande@ufl.edu}
\author[Chun]{Christopher Chun}
\address{Christopher Chun\\The University of Texas at Austin}
\email{chrischun@utexas.edu}
\author[Li]{Yusen Li}
\address{Yusen Li\\Stony Brook University}
\email{yusen.li@stonybrook.edu}
\author[Mawalkar]{Rohan Mawalkar}
\address{Rohan Mawalkar\\The Ohio State University}
\email{mawalkar.5@osu.edu}
\author[Tudorascu]{Adrian Tudorascu}
\address{Adrian Tudorascu\\West Virginia University}
\email{adtudorascu@mail.wvu.edu}
\author[Van]{Adam Van}
\address{Adam Van\\Duke University}
\email{adam.van@duke.edu}

\subjclass[2020]{Primary 35B40; Secondary 35F50, 35Q70, 35R06, 35Q35}

\keywords{Pressureless Euler-Poisson, Weak solutions, Sticky Particle solutions}

\begin{abstract}
One-dimensional Euler-Poisson-type systems with interaction potentials that are repulsive at short range and attractive at long range are useful simplified models that describe biological aggregation. In the current work we employ new techniques to treat compactly supported solutions in the case where the potential is $C^1$ and satisfies certain coercivity growth conditions although it is not necessarily semiconvex, as required in previous work. Central to this approach are uniform bounds on the positions and velocities of discrete sticky-particle solutions to such a system, which requires the potential be only $C^{0, 1}_{\mathrm{loc}}$. In the semiconvex case, these bounds also yield convergence of time-shifted trajectories to an eternal solution. The tradeoff, just as in the case of repulsive Euler-Poisson, is that we cannot claim that the emerging solutions for non-semiconvex potentials are sticky.
\end{abstract}

\maketitle

\section{Introduction}
\subsection{Overview} 
One-dimensional Euler-Poisson-type systems with interaction potentials that are repulsive at short range and attractive at long range are useful simplified models that describe biological aggregation. Over the last eight years or so there has been significant progress on existence of sticky particles solutions for systems with interaction potentials that are $C^1$ and $c$-convex, a feature which allows for constructing solutions which retain the ``stickiness" property from their discrete approximations. Here, $c$-convexity of a potential $W$ means that there exists a $c > 0$ such that $x \mapsto W(x) + \frac{c}{2} x^2$ is convex; we will use this interchangeably with semiconvex. The attractive Euler-Poisson system was also studied by approximating its potential uniformly with convex $C^1$ potentials. In the repulsive case such an approximation is unavailable, yet distributional solutions were obtained about two decades ago by Nguyen and Tudorascu via the Scalar Conservation Laws approach due to Brenier and Grenier. However, these solutions are not necessarily sticky solutions. Additionally, such techniques require an interaction potential whose force reduces to a function of the mass label alone and does not depend on the absolute distance between particles, whereas here we allow for potentials that depend on such distances. 

In the current work we employ new techniques to treat compactly supported solutions in the case where the potential is $C^1$ and satisfies certain coercivity growth conditions although it is not necessarily $c$-convex. Central to this approach are uniform bounds on the positions and velocities of discrete sticky-particle solutions to such a system. Notably, confinement requires less regularity of the potential than existence (only $C^{0, 1}_{\mathrm{loc}}$), and does not always require the attractive force to be stronger than the local repulsive force, see \Cref{rem:boundedforceconfinement}. These confinement results also yields additional tools in the semiconvex case, as developed in the Appendix. The tradeoff, just as in the case of repulsive Euler-Poisson, is that we cannot claim that the emerging solutions for non-semiconvex potentials are sticky.

This paper concerns the one-dimensional pressureless Euler--Poisson system with non-local interactions
\begin{subequations}\label{eq:PEP}
\begin{empheq}[left={\empheqlbrace\;\;}]{align}
\partial_t\rho+\partial_x(\rho v)&=0,\label{eq:continuity}\\
\partial_t(\rho v)+\partial_x(\rho v^2)&=-(W'*\rho)\,\rho.\label{eq:momentum}
\end{empheq}
\end{subequations}
Here \([0,T]\ni t\rightarrow \rho_t\) is a time-dependent family of Borel probability measures on \(\R\), and \([0,T]\ni t\rightarrow v_t\) is a velocity field defined \(\rho_t\)-almost everywhere. From the interaction  potential $W:\mathbb{R}\rightarrow\mathbb{R}$ we choose $W'(x)$ as a particular value in the sub or superdifferential set at $x\in\mathbb{R}$.  If the initial probability distribution is a convex combination of point masses, this system governs the dynamics of a collection of particles in which each particle interacts pairwise according to the potential $W$. If there is no interaction ($W=0$), the system is known as the pressureless Euler system. For $W(x)=\pm |x|$ we are looking at the attractive and repulsive Euler-Poisson systems, respectively. We are looking for a special type of solutions, called Sticky Particles solutions (SPS); what this means is that when particles collide, they undergo perfectly inelastic collisions so they stick together to form a new particle whose mass equals the sum of all the particles involved in the collision. Conservation of momentum through collisions is also imposed, i.e. the newly formed particle starts off at the time of collision with a momentum equal to the sum of the momenta of all the participating particles right before the collision. This model was first introduced by Zeldovich \cite{Zeldovich} as a cosmological model to study the formation of large structures in the universe. It is described as follows:  if $m_i$, $i=1,...,n$ is a discrete system of masses initially located at $-\infty < y^0_1 < ... < y^0_n < +\infty$ and moving with initial velocities $v^0_i$, $i=1,...,n$, then one makes the assumption that the acceleration of the $i^{th}$ particle is given by
$$y_i''(t)=-\sum_{j=1}^nm_jW'(y_i(t)-y_j(t))$$ while this particle undergoes no collision.  At the collision of a group of particles, the particles stick together and the initial velocity of the newly formed particle is given by the conservation of momentum. The evolution of the empirical distribution and its associated velocity
$$\rho_t:=\sum_{i=1}^nm_i\delta_{y_i(t)},\ v_t(y_i(t))=y_i'(t)$$ is described by \eqref{eq:continuity} and \eqref{eq:momentum}. In most of the works on existence of solutions the initial distribution is approximated by convex combinations of Dirac masses and the ensuing Sticky Particles system is used to approximate solutions to the system. 
Pressureless Euler (PE) has been studied by different techniques in \cite{Brenier1998}, \cite{Hynd2019}, \cite{Natile_2009}, \cite{Nguyen2008}, \cite{Tudorascu2015}, \cite{weinan1996}.  These techniques include the Sticky Particles model, description of the problem by an alternative scalar conservation law problem, and a semigroup approach. We would also like to acknowledge the contribution in \cite{Natile_2009}, where the solution to the pressureless Euler system is constructed by a Sticky Particles semigroup approach and a similar Lagrangian representation of the solution is obtained, albeit in a slightly weaker sense.

In \cite{SuderTudor} it was shown that Lagrangian solutions for pressureless Euler as in \cite{Hynd2019} satisfy the strong initial continuity of the energy, which, together with the Oleinik condition, is a necessary ingredient for the uniqueness result. The latter was obtained by means of explicitly linking Hynd's Lagrangian solutions \cite{Hynd2019} and the Scalar Conservation Laws solutions (SCL solutions; see \cite{Brenier1998} and \cite{Nguyen2008}) to the distributional solutions to a related problem whose well-posedness was shown in \cite{HuangWang}. More precisely, in \cite{SuderTudor} it is shown that if $v_0$ is right-continuous and bounded on the real line, then any Lagrangian solution must coincide with the (unique) SCL solution. Besides the strong initial continuity of the energy condition (SICE), a stronger version of the Oleinik condition was necessary for this connection (with the solutions in \cite{HuangWang}) to work: the velocity $v$ admits a Borel representative $\hat v$ (i.e. $\hat v(t,\cdot)=v(t,\cdot)$, $\rho(t,\cdot)$-a.e. for all $t$) which satisfies a stronger version of the Oleinik condition, namely {\it everywhere} Oleinik, in the sense that the pertinent inequality is required to hold {\it everywhere} (and not just $\rho(t,\cdot)$-a.e.). This class of solutions is stable and contains the discrete Sticky Particles solutions \cite{SuderTudor}, so we call such solutions SPS (Sticky Particles Solutions). Outside the pressureless Euler case (i.e. $W\equiv 0$), few results on uniqueness of SP solutions exist in the literature. 
 
As far as the asymptotic behavior of solutions is concerned, in \cite{TudorSPS}, the author employed SPS and a reflection principle to solve the pressureless Euler problem in the case where $\mathcal{C}$ is an arbitrary closed subset of the real line. Of course, boundary conditions need be imposed when $\partial\mathcal{C}$ is nonempty (that is, $\mathcal{C}\neq\R$) and in \cite{TudorSPS} it is successfully argued that among the two natural choices, i.e. reflective vs sticky boundary, only the latter leads to a well-posed problem. Soon after, in \cite{hynd2023asymptoticsstickyparticlesevolution}, it was shown that if $\mathcal{C}$ is compact, then the SPS to PE converges asymptotically to an equilibrium solution. Tudorascu and Wassink \cite{tudorascu-wassink} showed that there is no universal rate associated with this asymptotic decay, i.e. for any positive convex function $\varphi$ on $[0,\infty)$ which converges asymptotically to zero, there exists an initial configuration for which the Wasserstein distance between $\rho_t$ and the equilibrium $\rho_\infty$ exceeds  
$\varphi$.
In \cite{Tudorascu2024REU} the authors studied the asymptotic behavior of SPS to the Cauchy problem associated with the pressureless Euler-Poisson (PEP) with attractive interaction between particles, namely $W(x)=|x|$. It turns out that the solution $\rho$ converges to some limiting probability distribution $\rho_\infty$ as $t\rightarrow\infty$ under far fewer restrictions than in the PE case. At the heart of the approach lies the Lagrangian representation of solutions in terms of the projection onto the convex cone of essentially nondecreasing functions in the Hilbert space $L^2(0,1)$, denoted by $\mathcal{K}$. More specifically, for the PE system, Natile and Savar\'{e} \cite{Natile_2009} proved that the optimal map $N_t$ that pushes the Lebesgue measure restricted to the interval $(0,1)$ forward to $\rho_t$ is given by 
$$N_t=\mathrm{proj}_\mathcal{K}\big(N_0+tV_0)\mbox{ for all }t\geq0,$$
where $V_0:=v_0\circ N_0$. The initial conditions are to be understood in the sense of distributions:
\begin{equation}\label{initcondexp}
\lim_{t\rightarrow 0^+}\int_\R\zeta(y)\rho(t,dy)=\int_\R\zeta(y)\rho_0(dy),\ \ \lim_{t\rightarrow 0^+}\int_\R v(t,y)\zeta(y)\rho(t,dy)=\int_\R v_0(y)\zeta(y)\rho_0(dy)
\end{equation}
for all $\zeta\in C_c^\infty(\R)$. 
In \cite{Tudorascu2024REU} the above projection formula was extended to the PEP case, for which it turns out we have:
$$N_t=\mathrm{proj}_\mathcal{K}\bigg(N_0+tV_0+\frac{1}{2}t^2A_0\bigg)\mbox{ for all }t\geq0,$$
where $A_0:=1-2\,\mathrm{Id}$ on $(0,1)$. This is
the term that accounts for the attractive interaction between particles in the PEP case. In \cite{TudorascuREU2025} the authors provide sufficient conditions for the solution to decay to equilibrium in the repulsive case, i.e. $W(x)=-|x|$. Here the SPS solution does not enjoy an explicit projection representation as in the attractive case and, in fact, the SPS do not make up a stable class of solutions even in the discrete case. That is, it is easy to construct a sequence of discrete SPS that converges to a discrete distributional solution that is not sticky.  In a companion to this paper we analyze stationary solutions and the asymptotic behavior of the above system with the repulsive-attractive interaction potential
\begin{equation}
W(x)\coloneqq \lvert \lvert x\rvert-1\rvert\label{eq:unshifted-potential}
\end{equation}
and some approximations of it.  
For such a potential we define the derivative $W'$ to be zero at the points where $W$ is not differentiable, i.e. $W'(-1)=W'(0)=W'(1)=0$. 

The main challenge in the case of the potential \eqref{eq:unshifted-potential}, unresolved at this point, is proving existence of solutions.  In \cite{Hynd2019}, \cite{Hynd2020} we find the most general (in regards to $W$) result available in the literature on existence of solutions for the initial value problem associated with \eqref{eq:continuity}, \eqref{eq:momentum}.  
Since this potential is not $c$-convex for any $c\in\mathbb{R}$, the methods of \cite{Hynd2019}, \cite{Hynd2020} do not apply here to prove convergence for the discrete solutions starting at approximations of the initial data to the distributional solution originating at said data; the author makes the point that the $c$-convexity assumption prevents the interaction from being ``too repulsive". Existence of distributional solutions in the purely repulsive case was obtained in \cite{Nguyen2008} by the SCL method but the solutions obtained as limits of discrete SPS are not necessarily sticky. In fact, Hynd \cite{Hynd2020} works with $c$-convex potentials that are also $C^1$, with the exception of $W(x)=|x|$ (attractive Euler-Poisson) which he handles by a limiting procedure. As we show in the companion paper, the singularities of \eqref{eq:unshifted-potential} at $\pm1$ can be handled in a similar manner. The $c$-convexity condition is violated by the concave singularity at $0$. To this end, we propose a different approach to obtaining general existence of solutions as limits of discrete SPS; this new approach requires a $C^1$ potential, so it does not apply to \eqref{eq:unshifted-potential}. However, it does not require the potential to be $c$-convex; the tradeoff is that the solutions we obtained are not necessarily sticky. In the next section we show that, under certain growth conditions on $W'$, discrete solutions starting in a compact interval of diameter $2M$ and with speeds bounded by $M$ stay in a compact interval for all times and the diameter of said interval depends only on $M$. That leads to a general existence result for distributional solutions that start at compactly supported probability measures with continuous velocities.  

\subsection{Notation and definitions}
Throughout this manuscript we use $\mathcal{P}(\mathbb{R})$ to denote the set of all Borel probabilities on the real line. More particularly, we use $\mathcal{P}_p(\mathbb{R})$ for all the elements of $\mathcal{P}(\mathbb{R})$ with finite $p$-moment, where $1\leq p<\infty$. The $p$-Wasserstein distance between two measures $\mu,\ \nu\in \mathcal{P}_p(\mathbb{R})$ is defined as
$$W_p(\mu,\nu):=\inf_{\gamma\in \Gamma(\mu,\nu)}\Big[\int\!\!\int|x-y|^p\gamma(dx,dy)\Big]^{1/p},$$
where $\Gamma(\mu,\nu)$ is the set of all joint Borel probability measures with marginals $\mu$ and $\nu$, respectively, i.e. 
$$\int\!\!\int\zeta(x)\gamma(dx,dy)=\int\zeta(x)\mu(dx)\mbox{ and }\int\!\!\int\zeta(y)\gamma(dx,dy)=\int\zeta(y)\nu(dy)$$
for any $\zeta\in C_b(\mathbb{R})$.
Since we work with Borel probabilities on the real line, we also have that 
$$W_p(\mu,\nu)=\|N^\mu-N^\nu\|_{L^p(0,1)},$$
where $N^\mu$ is the right-continuous optimal map pushing the Lebesgue measure restricted to the unit interval $(0,1)$, denoted by $\chi$, to $\mu$. It can be obtained as the generalized inverse of the right-continuous cumulative distribution function $M^\mu(y):=\mu((-\infty,y])$, i.e. $N^\mu(x)=\inf
\{y\in\mathbb{R}\,:\,M^\mu(y)>x\}$ for all $x\in (0,1)$.

The solution $(\rho,v)$ is an absolutely continuous curve in the Wasserstein space $\mathcal{P}_2(\mathbb{R})$ (see \cite{AGSBook}), in the sense that $v_t\in L^2(\rho_t)$ for a.e. $t\in[0,T]$, and the function $\beta(t):=\|v_t\|_{L^2(\rho_t)}$ belongs to $L^2(0,T)$. Furthermore, we have 
$$W_2(\rho_s,\rho_t)\leq \int_s^t\beta(\tau)d\tau\mbox{ for all }0\leq s\leq t.$$

The total energy (Hamiltonian) of a solution $(\rho,v)$ at time $t$ is
\begin{equation}\label{eq:hamiltonian}
\mathcal{H}(t):=\frac{1}{2}\int_{\mathbb{R}}v_t(x)^2\,\rho_t(dx)+\frac{1}{2}\iint_{\mathbb{R}\times\mathbb{R}}W(x-y)\,\rho_t(dx)\,\rho_t(dy),
\end{equation}
the sum of the kinetic energy and the interaction energy. Along sticky-particle solutions $\mathcal{H}$ is non-increasing since sticky collisions dissipate kinetic energy; see \eqref{eq:nonincresing-energy}.

\subsection{Organization of the paper}
This paper is organized as follows: Section 2 identifies classes of potentials with the confinement property, i.e. for any $M>0$ there exists $C(M)>0$ such that the SPS originating at an initial discrete probability measure supported in $[-M,M]$ and starting off with a speed of at most $M$ remains confined to the interval $[-C(M),C(M)]$ along with its velocity for all time. In Section 3 we collect some results on the Lagrangian representation of discrete SPS $(\rho,v)$ and rewrite them in terms of the optimal maps pushing the uniform probability measure on the unit interval forward to $\rho_t$. This will allow us to pass to the limit in Section 4 and obtain existence of general distributional solutions for compactly supported initial distributions with continuous (on the support) velocities. Since the potentials analyzed in this paper are attractive at long range, we dedicate Section 5 to comparing the behavior of our SPS to the SPS for the attractive case $W(x)=|x|$. The latter case was shown earlier \cite{Tudorascu2024REU} to decay at an explicit rate to a single Dirac delta; in fact, the collapse into the equilibrium happens in finite time for solutions originating in a compact set. In the Appendix, we apply the confinement results shown in Section 2 to the case of a semiconvex potential (\cite{Hynd2020}), where we show time-shifted trajectories of SPS converge to constant-energy eternal solutions of a $L^2$-valued ODE, which allows for a short proof of an asymptotic collapse result.  

    \section{Confinement for discrete SP solutions} \label{sec:uniformconfinement}

    In this section, we establish uniform-in-time confinement estimates for discrete sticky-particle solutions under interaction potentials with controlled growth at infinity. We combine moment estimates and estimate a certain quantity that is coercive on support diameter to show confinement. The results provide the compactness needed in the subsequent sections to pass from discrete sticky-particle approximations to distributional solutions for general compactly supported initial data.

        \textit{We require once and for all that $W \geq 0$. Any admissible potential can be made to satisfy this by adding a finite constant. This does not affect $W'$, thus the pressureless Euler equation remains unchanged.}

    \begin{definition} \label{defn:admissiblepotential}
        Let $W \in W^{1, \infty}_{\mathrm{loc}}(\mathbb{R})$ be an even function. For any $p \in [0, 1)$, we say $W$ is an \emph{admissible potential} for exponent $p$ if
        $$0 < \liminf_{x \rightarrow \infty} \frac{W'(x)}{|x|^p} \leq \limsup_{x \rightarrow \infty} \frac{W'(x)}{|x|^p} < \infty,$$
        and for the endpoint case of $p=1$ we only require that
        \[\liminf_{x \rightarrow \infty} \frac{W'(x)}{|x|} > 0.\]
        Note that the endpoint case includes all such potentials where $W'$ grows super-linearly, and hence we do not need to consider $p > 1$.
    \end{definition}
    \begin{remark} \label{rem:boundedforceconfinement}
        For $p=0$ (the case of \emph{bounded-force potentials}, i.e. $\norm{W'} < \infty$), uniform confinement (\Cref{thm:supportbound}) holds even when the short-range repulsion dominates the long-range attraction, i.e. when $$\norm{\max\{W',0\}}_{L^\infty(0,\infty)} < \norm{\min\{W',0\}}_{L^\infty(0,\infty)}.$$
\end{remark}
    \begin{remark}        
        For ease of exposition, the proofs in the subsequent subsections will nevertheless assume both an upper and lower bound on the growth of $W'$: that is, they apply for all admissible potentials with exponent $p \in [0, 1)$ as defined, and additionally for $p=1$ if an upper bound on growth is assumed as well. For full generality (i.e. removing the upper bound on growth for $p=1$), an analogous but more delicate argument is needed, which is developed in \Cref{subsect:comtracking}.

        Indeed, the argument for confinement of discrete SP solutions can be proven under the slightly weaker assumption that $x^p \lesssim W'(x) \lesssim x^q$ for $p, q \geq 0$ and $q < p + 1$ (where the upper bound is again superfluous for $p=1$). For notational clarity we will not pursue this.
    \end{remark}        
    Note that although functions in $W^{1, \infty}_{\mathrm{loc}}(\mathbb{R})$ only have a.e.-defined weak derivative, we require that the growth condition on $W'$ hold \emph{everywhere}: that is, we fix an odd  pointwise representative of $W'$. This is consistent with the sticky-particle formulation of one-dimensional pressureless Euler, as defining the convolution $(W' * \rho)$ for arbitrary $\rho$ requires fixing a representative of $W'$ as well.
    
    Also, note that $W^{1, \infty}_{\mathrm{loc}}(\mathbb{R}) = C^{0, 1}_{\mathrm{loc}}(\mathbb{R})$, which will be of use later.  
    
    The goal is to show the following:
    \begin{restatable}[Support Diameter Bound]{theorem}{restatesupportbound} \label{thm:supportbound}
        Let $W$ be an admissible potential. For any finite weighted sum of Dirac deltas $\rho_0$ and $v_0\in C(\mathbb{R})$ such that
        \begin{equation}\nonumber
        \int y \rho_0(dy) = \int v_0(y) \rho_0(dy) = 0, 
        \end{equation}
        let $(\rho, v)$ denote the SPS corresponding to the initial data $(\rho_0, v_0)$. 
        Then for any $0<M<\infty$ there exists $0<C(M)<\infty$ such that for any initial $(\rho_0, v_0)$ satisfying the previous hypothesis and additionally $\rho_0$ supported in $[-M,M]$ and any $v_0\in C(\mathbb{R})$ with $|v_0|\leq M$ we have $\mathrm{spt} (\rho_t) \subset [-C(M), C(M)]$ for all time $t$.
    \end{restatable}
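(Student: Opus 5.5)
The plan has three steps: bound the energy, extract moment bounds from it, and then use a coercive quantity that controls the support diameter. Throughout, the conditions $\int y\,\rho_0=\int v_0\,\rho_0=0$ let us place the centre of mass at the origin for all time.

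\emph{Step 1: conserved quantities.} Along discrete SPS, total momentum is conserved, both between collisions (by oddness of $W'$) and through collisions (by construction). So the centre of mass stays at $0$ for all $t$. The Hamiltonian \eqref{eq:hamiltonian} is non-increasing, so
\[
\mathcal{H}(t)\le \mathcal{H}(0)\le \tfrac12 M^2+\tfrac12\max_{[-2M,2M]}W=:E(M),
\]
and this bound is independent of the number of particles. Since $W\ge0$, we get $\int v_t^2\rho_t\le 2E(M)$. We also get $\iint W(x-y)\rho_t\rho_t\le 2E(M)$.

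\emph{Step 2: moment bounds.} The growth condition on $W'$ gives $W(x)\ge c|x|^{p+1}-C$ for all $x$. This yields a uniform bound on $\iint|x-y|^{p+1}\rho_t\rho_t$. Combined with the centre of mass being $0$, Jensen's inequality gives a uniform $(p+1)$-moment bound on $\rho_t$.

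These moment bounds do not by themselves stop a tiny mass from escaping far away, so a pointwise bound needs a further argument.

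\emph{Step 3: the extreme particle.} Let $y_n(t)$ be the rightmost particle, with velocity $u=y_n'$; it may be a cluster, and it is piecewise $C^1$ with only inelastic merges. I will show $y_n$ cannot exceed some $R(M)$. Its acceleration is $-\sum_j m_jW'(y_n-y_j)$.

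Split the masses into those within distance $L$ of the origin and the far ones. By the moment bound, the near mass is at least $1-CL^{-(p+1)}$. When $y_n\ge 2L$, the near masses pull it back with force at least $c(1-CL^{-(p+1)})(y_n/2)^p$. The far masses lie to its left, at distance in $[0,y_n]$, and their repulsion is at most $\sup_{[0,y_n]}(W')^-$ times a mass of order $L^{-(p+1)}$. The upper growth bound on $W'$ (local boundedness of $W'$ near $0$, plus $W'\lesssim|x|^p$) keeps this repulsion dominated once $L$ is large.

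Hence for $y_n$ large, $u'\le -c\,y_n^p<0$. Consider the quantity $\tfrac12u_+^2+\int^{y_n}c\,s^p\,ds$. It is non-increasing whenever $y_n\ge R_0$: the ODE handles the smooth parts, and at merges the velocity of the rightmost cluster can only decrease. Therefore, once $y_n\ge R_0$, the subsequent excursion is bounded by $\tfrac12 u_+^2$ at the entry time, measured in units of $s^p$.

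It remains to bound $u_+$ at the entry time uniformly. To do this, I would bound velocities uniformly: the acceleration is bounded by $\sup_{|x|\le 2y_n}|W'|$, and kinetic energy bounds do not control a single particle. So I expect to bootstrap: first use the energy on $[0,R_0]$ plus the monotone Lyapunov-type quantity computed from time $0$ (where $|u|\le M$), arguing that $y_n$ can only exceed $R_0$ after having been pushed outward, and the outward push is only possible by repulsion of magnitude $\le\sup_{[0,2R_0]}|W'|$.

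For the Remark \ref{rem:boundedforceconfinement} case, the near/far mass splitting is what absorbs a strong local repulsion, since the far mass is small. The same argument applies symmetrically to the leftmost particle, giving $C(M)$.

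The main obstacle is Step 3's uniform control of the outermost particle's speed. Mass can be vanishingly small while its velocity is large, so energy bounds do not control it. I would handle this with the extremal Lyapunov functional together with the fact that collisions only decrease the rightmost velocity.
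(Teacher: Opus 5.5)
Your Steps 1--2 coincide with \Cref{lem:momentbounds}. Your force estimate for the rightmost particle is also fine; it needs no upper growth bound, since no mass lies to its right. The argument breaks in Step 3 at the claim that ``at merges the velocity of the rightmost cluster can only decrease.'' This is backwards. The rightmost cluster can only collide with a particle coming from its left, and that particle's velocity at the collision time is at least $u$, so the merged velocity (a mass-weighted average) is at least $u$. This momentum transfer from behind is exactly the obstruction the paper points out. It means $\tfrac12 u_+^2+\int^{y_n}cs^p\,ds$ can jump up at every such collision, so the excursion beyond $R_0$ is not controlled by the rightmost particle's energy at the entry time.

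The repair is to take the maximum of $\tfrac12 (v_j)_+^2+\Phi(x_j)$ over \emph{all} particles currently in $[R,\infty)$. Colliding particles share a position, and convexity of $v\mapsto\tfrac12 v_+^2$ shows the merged particle's value does not exceed the largest incoming one, so the maximum cannot increase at collisions. This, however, requires the restoring bound $F_t(x)\le\Lambda<0$ for every $x\ge R$, not just at the rightmost particle. That is where the upper growth bound on $W'$ is genuinely used (\Cref{lem:tailforcebounds}, via H\"older on the mass to the right of $x$).

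The second gap is the one you flag yourself and do not close: a uniform bound on $(v_i)_+$ at the moment any particle enters $[R,\infty)$. Your bootstrap (``the outward push is only possible by repulsion of magnitude $\le\sup|W'|$'') ignores that a particle's velocity at time $T$ is inherited through collisions. Its ancestors may have visited the far left, where the force toward the origin is not bounded above (e.g.\ for superlinear $W'$).

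The paper handles this in \Cref{lem:tailvelocitybound}. It traces backward the highest-velocity ancestor $X(t)$, whose velocity only drops at collisions, and splits into two cases.
\begin{enumerate}
\item If $X\ge -R$ throughout, then $\ddot X\le\Lambda_R$ by \Cref{lem:globalforcebounds}, and $\ddot X\le\Lambda<0$ on $[R,\infty)$. A work--energy computation then gives $(\dot X(T))_+^2\le M^2+4\Lambda_R R$.
\item If $X(t^*)\le -R$ for some $t^*$, then order preservation and Markov's inequality force the cluster at time $T$ to carry mass $m_j\ge 1-2\Delta_1/R$. The bound $\tfrac12 m_j v_j(T)^2\le\mathcal{H}(0)$ then controls its velocity.
\end{enumerate}
This uniform mass lower bound is precisely what defeats the ``tiny mass, large velocity'' scenario you identify, and nothing in your proposal plays its role.
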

    As we will see, this immediately yields the following:
    \begin{restatable}[Velocity Bound]{corollary}{restatevelocitybound} \label{corr:velocitybound}
        Under the same assumptions as \Cref{thm:supportbound}, there exists some $C'(M) > 0$ such that $\norm{v_t}_{L^\infty(\rho_t)} \leq C'$ for all $t$.
    \end{restatable}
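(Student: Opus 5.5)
The velocity bound will follow from the support bound of \Cref{thm:supportbound} together with energy dissipation, or more directly from bounded forces on bounded sets. I propose the force-bound route, since it does not need the Hamiltonian to control $L^\infty$ norms.

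First, by \Cref{thm:supportbound}, for all $t\ge 0$ every particle satisfies $|y_i(t)|\le C(M)$. Hence all pairwise distances satisfy $|y_i(t)-y_j(t)|\le 2C(M)$. Because $W\in W^{1,\infty}_{\mathrm{loc}}$ and we fixed a pointwise representative of $W'$ obeying the growth conditions everywhere, the quantity
\[
L(M):=\sup_{|z|\le 2C(M)}|W'(z)|
\]
is finite. Since $\sum_j m_j=1$, each particle, while not colliding, has acceleration bounded by $|y_i''(t)|\le L(M)$. Taking a local-in-time sup of the representative is harmless here because the particle configuration is finite.

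A pure acceleration bound only gives $|y_i'(t)|\le M+L(M)t$, which grows in time. So the key step is to convert the position bound into a uniform velocity bound, using the fact that a particle confined to $[-C,C]$ with bounded acceleration cannot be fast. Fix a particle (or cluster) at time $t$ with velocity $u=y_i'(t)>0$; the case $u<0$ is symmetric. On a time interval $[t,t+\tau]$ with no collision involving this cluster, we have $y_i(t+\tau)\ge y_i(t)+u\tau-\tfrac12 L\tau^2$. Choosing $\tau=u/L$ gives a displacement of at least $u^2/(2L)$. Confinement then forces $u^2/(2L)\le 2C(M)$, i.e.
\[
u\le 2\sqrt{L(M)\,C(M)}.
\]

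The obstacle is that collisions may occur during $[t,t+\tau]$. To handle this, I track the maximal velocity in the system instead of a single particle. At a collision, the new velocity is a mass-weighted average of the incoming velocities, so $\max_i y_i'$ cannot increase across collisions. Between collisions, $\max_i y_i'$ decreases at rate at most $L$. Now take the particle achieving the maximal velocity $u$ at time $t$ and follow the trajectory of the cluster containing it. Its velocity is still bounded below by $u-L(s-t)$ after any mergers: merging can lower the velocity, but only by averaging with particles that are behind it. Those particles sit to its left, so since positions are ordered and it is moving right, such collisions come from faster particles from behind, or slower ones ahead.

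This step is delicate. A cleaner way to handle it is to follow the rightmost particle position $y_n(t)$, which is always the top of an ordered system. Its velocity at a collision can only increase or be replaced by an average involving slower particles ahead, and there are none ahead. So for $y_n$ the lower bound $y_n'(s)\ge y_n'(t)-L(s-t)$ holds across collisions. A cluster containing the fastest particle eventually either reaches the right end or hits slower particles; in the latter case momentum conservation bounds the cluster velocity only from below by averages. To avoid this, I would instead bound velocities via the fact that $\sum_i m_i y_i'=0$ and apply the displacement argument to the extremal particles $y_1,y_n$. I would then propagate the bound inward using the Oleinik-type ordering of SPS, namely that discrete SPS velocities at any time are bounded by those obtained from the extremal clusters together with the acceleration bound. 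The main obstacle is precisely this inward propagation through collisions.

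As a fallback, one can use energy dissipation \eqref{eq:nonincresing-energy}: $\tfrac12\sum_i m_i y_i'^2\le \mathcal H(0)\le \tfrac12 M^2+\sup_{|z|\le 2M}W(z)$. This bounds each velocity by $\sqrt{2\mathcal H(0)/m_{\min}}$, which is uniform in $t$ but not in the masses. Because of that mass dependence, the displacement argument above is the intended route.
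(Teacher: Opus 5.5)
Your force bound $L(M)=\sup_{|z|\le 2C(M)}|W'(z)|$ is what the paper uses. However, the proposal has a genuine gap at the step you flag yourself: nothing in it controls the velocity through collisions, so no uniform bound is actually proved.

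Run forward from time $t$, the displacement argument fails because the fast particle can catch slower particles ahead of it, and its velocity is then averaged down arbitrarily. Your sentence saying that merging only averages with particles behind it has this backwards: particles catching up from behind are faster. None of the proposed repairs closes the gap.
\begin{itemize}
\item Following $y_n$ forward only yields an upper bound on $y_n'$ (symmetrically, a lower bound on $y_1'$).
\item The ``Oleinik-type ordering'' used to propagate bounds inward is not established. No Oleinik or quantitative-stickiness estimate is available for non-semiconvex $W$, which is exactly the paper's setting.
\item The energy fallback $\sqrt{2\mathcal H(0)/m_{\min}}$ is not admissible, because $C'$ must depend only on $M$. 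This uniformity is what \Cref{thm:existence} needs, and there the minimal mass of the approximating discrete data tends to zero in general.
\end{itemize}

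The missing idea is to argue backward in time along a single trajectory, as the paper does by reusing the tracking trick from \Cref{lem:tailvelocitybound}. Fix $T$, choose $j^*$ maximizing $v_j(T)$, and trace its ancestry back to $t=0$, selecting at each collision the incoming particle of largest velocity. This produces a continuous curve $X:[0,T]\to[-C,C]$, which is in fact the trajectory of one initial particle. It satisfies $|\ddot X|\le L$ away from finitely many collision times, and $\dot X(s-)\ge \dot X(s+)$ at each of them, because the post-collision velocity is a convex combination of the incoming velocities and the tracked one was the largest.

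Hence $\dot X(t)\ge \dot X(T)-L(T-t)$ for all $t\le T$, which is exactly the lower bound your displacement estimate requires. Set $u=\dot X(T)$.
\begin{itemize}
\item If $T\ge u/L$, then $u^2/(2L)\le X(T)-X(T-u/L)\le 2C$.
\item If $T<u/L$, then $u\le M+LT$ by $|v_0|\le M$, and one still gets $u\le \max(2M,2\sqrt{2LC})$.
\end{itemize}
You already observed that the maximal velocity never jumps up at collisions. The point is to exploit this looking backward from $T$, along one path, rather than forward.

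The paper packages the same mechanism as a work-energy inequality on the last interval $(s,T]$ where $\dot X>0$. Downward jumps only decrease $\dot X^2$ there, so $\dot X(T)^2\le M^2+4CL$. The lower bound on $v_t$ follows by tracking the slowest ancestor instead, or by the reflection symmetry.
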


    \subsection{Preliminary lemmas} \label{subsect:lemmas}
    \begin{lemma} \label{lem:momentbounds}
        Under the same assumptions as \Cref{thm:supportbound}, for all $0 \leq q \leq p+1$ there exists a $\Delta_{q}(M) > 0$ such that $$\int |y|^{q} \rho_t(dy) \leq \Delta_{q} \qquad \forall t,$$
        where $p$ is the growth exponent as in \Cref{defn:admissiblepotential}.
    \end{lemma}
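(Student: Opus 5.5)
Since $\rho_t$ is a probability measure, it suffices to bound the top moment $q=p+1$: for $q\le p+1$, Jensen (or $|y|^q\le 1+|y|^{p+1}$) gives $\int|y|^q\rho_t\le 1+\Delta_{p+1}$. The plan is to obtain the $(p+1)$-moment bound from energy dissipation. We use $\mathcal{H}(t)\le\mathcal{H}(0)$ along sticky-particle solutions (\eqref{eq:nonincresing-energy}). We also use conservation of the center of mass, $\int y\,\rho_t(dy)=0$ for all $t$. This holds because total momentum is conserved: collisions conserve momentum, and the interaction forces cancel pairwise since $W'$ is odd. Hence momentum stays $0$ and the barycenter stays at $0$.

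\emph{Step 1: bound on $\mathcal{H}(0)$.} Since $\rho_0$ is supported in $[-M,M]$ and $|v_0|\le M$, we have
\[
\mathcal{H}(0)\le \tfrac12 M^2+\tfrac12\max_{|z|\le 2M}W(z)=:E(M).
\]
This depends only on $M$, because $W$ is continuous.

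\emph{Step 2: coercivity of $W$.} By the admissibility condition there are $c,R>0$ with $W'(x)\ge c x^p$ for $x\ge R$. Integrating, and using $W\ge0$, evenness and local boundedness of $W$, we get
\[
W(z)\ge c'|z|^{p+1}-K \qquad\text{for all } z\in\mathbb{R},
\]
with constants $c',K$ depending only on $W$. Combining with $W\ge0$ and $\mathcal{H}(t)\le E(M)$ gives
\[
\iint |x-y|^{p+1}\rho_t(dx)\rho_t(dy)\le \frac{2E(M)+K}{c'}.
\]

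\emph{Step 3: from the double integral to the moment.} This is the step that needs the barycenter condition. Set $r=p+1\ge1$. The map $y\mapsto|x-y|^r$ is convex, so Jensen in $y$ together with $\int y\,\rho_t(dy)=0$ yields
\[
\int|x-y|^r\rho_t(dy)\ge \Big|x-\int y\,\rho_t(dy)\Big|^r=|x|^r.
\]
Integrating in $x$ gives
\[
\int|x|^{p+1}\rho_t(dx)\le \iint|x-y|^{p+1}\rho_t\rho_t\le \Delta_{p+1}(M).
\]
The main point to check carefully is the conservation of the barycenter through collisions. This follows because each particle trajectory is piecewise $C^2$ and the total momentum $\sum m_i y_i'$ satisfies
\[
\frac{d}{dt}\sum_i m_i y_i'=-\sum_{i,j} m_im_jW'(y_i-y_j)=0
\]
by oddness of the fixed representative of $W'$. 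The derivative vanishes for $W'(0)=0$ as well, and the momentum is preserved at collision times by construction.
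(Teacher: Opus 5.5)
Your proposal is correct and follows essentially the same route as the paper: monotonicity of $\mathcal{H}$ bounds $\iint W(x-y)\,\rho_t(dx)\rho_t(dy)$, a coercive lower bound $W(z)\ge c'|z|^{p+1}-K$ converts this into a bound on $\iint|x-y|^{p+1}\rho_t\rho_t$, and Jensen's inequality with the conserved zero barycenter gives the $(p+1)$-moment, with the lower moments following trivially. You are in fact more explicit than the paper about the bound on $\mathcal{H}(0)$, the derivation of coercivity, and why the center of mass is conserved through collisions.
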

    \begin{proof}
        By the growth condition and evenness of $W$, there exists some $R > 0$ and $\delta > 0$ such that $$\frac{W(x)}{1 + |x|^{p+1}} \geq \delta > 0 \qquad \forall |x| \geq R.$$
        Additionally, as $W \in W^{1, \infty}_{\mathrm{loc}}(\mathbb{R})$, there exists some constant $s$ (depending only on $W$) such that $$W(x) \geq \delta |x|^{p+1} + s\mbox{ for all }x\in\mathbb{R}.$$
        So by Jensen's inequality, we have that 
        \begin{align*}
            \iint W(x - y) \rho_t(dx) \rho_t(dy) &= \iint W(|x - y|) \rho_t(dx) \rho_t(dy)\\
            &\geq \delta \iint |x - y|^{p+1} \rho_t(dy) \rho_t(dx) + s\\
            &\geq \delta \int \left|x - \left( \int y \rho_t(dy) \right) \right|^{p+1} \rho_t(dx) + s\\
            &= \delta \int |x|^{p+1} \rho_t(dx) + s,
        \end{align*}
        where the last equality follows because we assumed that $\int y \rho_0(dy) = 0$ and $\int v_0(y) \rho_0(dy) = 0$, which combined with the evenness of the potential implies that the center of mass is zero for all time. Thus, 
        $$\int |x|^{p+1} \rho_t(dx) \leq \frac{1}{\delta} \left( -s + \iint W(x - y) \rho_t(dx) \rho_t(dy) \right).$$
        And now, because the Hamiltonian $\mathcal{H}(t)$ is non-increasing, $$\iint W(x - y) \rho_t(dx) \rho_t(dy) \leq 2 \mathcal{H}(0),$$ which has an upper bound depending on $M$ only (as long as $\rho_0$ is supported in $[-M,M]$ and $|v_0|\leq M$ on $[-M,M]$). Thus, there exists some $\Delta_{p+1}(M) > 0$ such that $\int |x|^{p+1} \rho_t(dx) \leq \Delta_{p+1}$ for all $t$.
        
        The fact that for all $1\leq q\leq p+1$ a suitable $\Delta_q$ exists follows by the existence of $\Delta_{p+1}$ and the standard truncation argument. Extending these bounds to cover the full range of $0 \leq q \leq p+1$ follows immediately by Jensen's inequality.
    \end{proof}
    We now make the following definition:
    \begin{definition} \label{defn:forceatapoint}
        Let $(\rho_t, v_t)$ be as in \Cref{thm:supportbound}. For any $t$, define the $\rho_t$-measurable function $F_t$ by $$F_t(x) := -\int_{\mathbb{R}} W'(x - y) \rho_t(dy).$$
    \end{definition}
    With the moment bound, we have the following estimates on $F_t$:
    \begin{lemma} \label{lem:globalforcebounds}
        Let $(\rho_t, v_t)$ be as in \Cref{thm:supportbound}. Then for any $R > 0$, there exists a $\Lambda_R > 0$ such that $$x \geq -R \implies F_t(x) \leq \Lambda_R.$$
    \end{lemma}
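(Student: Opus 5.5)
We bound $F_t(x) = -\int W'(x-y)\rho_t(dy)$ from above by splitting the integral according to the sign and size of $x-y$. By the growth condition in \Cref{defn:admissiblepotential} and local boundedness of $W'$ (recall $W\in W^{1,\infty}_{\mathrm{loc}}=C^{0,1}_{\mathrm{loc}}$, and we fixed an odd pointwise representative of $W'$), there exist $R_0>0$ and a constant $K$ (depending only on $W$) with the following properties: $W'(z)\geq 0$ for $z\geq R_0$, $|W'(z)|\leq K$ for $|z|\leq R_0$, and, when $p<1$ or an upper growth bound is assumed, $|W'(z)|\leq K(1+|z|^{p})$ for all $z$. By oddness, $-W'(z)=W'(-z)$.

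We split $\mathbb{R}$ into three regions in $y$.
\begin{itemize}
\item \emph{The region $x-y\geq R_0$.} Here $-W'(x-y)\leq 0$, so this region contributes nonpositively and can be discarded.
\item \emph{The region $|x-y|< R_0$.} Here the integrand is at most $K$ in absolute value, so the contribution is at most $K$.
\item \emph{The region $x-y\leq -R_0$, i.e.\ $y\geq x+R_0$.} Here $-W'(x-y)=W'(y-x)$ with $y-x\geq R_0>0$, which is positive and potentially large. This is the only dangerous term.
\end{itemize}

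For the dangerous term we use $x\geq -R$: then $y-x\leq y+R\leq |y|+R$, so
\[
W'(y-x)\leq K\bigl(1+(|y|+R)^p\bigr)\leq K\bigl(1+2^p R^p+2^p|y|^p\bigr).
\]
Integrating against $\rho_t$ and applying \Cref{lem:momentbounds} with $q=p\leq p+1$ gives a contribution of at most $K(1+2^pR^p+2^p\Delta_p(M))$, uniformly in $t$. Altogether we may take
\[
\Lambda_R = K + K\bigl(1+2^pR^p+2^p\Delta_p\bigr),
\]
which depends only on $R$, $M$ and $W$.

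The main obstacle is the endpoint case $p=1$ without an upper growth bound. There $W'(y-x)$ need not be controlled by any moment available from \Cref{lem:momentbounds}, which only bounds moments up to order $p+1=2$. One could try to use monotonicity tricks, but a genuinely superlinear $W'$ would require control of higher moments. Following the paper's stated convention for the preliminary subsections, we assume an upper bound $W'(z)\lesssim 1+|z|$ in that case; the second moment $\Delta_2$ then more than suffices. The fully general case is deferred to the argument of \Cref{subsect:comtracking}.
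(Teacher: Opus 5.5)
Your proposal is correct and is essentially the paper's argument. You discard the attractive contribution from $y \le x - R_0$ and bound the near region by the local bound on $W'$. You then control the remaining tail $y > x$ via the upper growth bound, $y - x \le |y| + R$, and the $p$-th moment bound of \Cref{lem:momentbounds}, assuming an upper growth bound when $p=1$ just as the paper does. The remaining differences, namely a three-way rather than two-way split and the factor $2^p$ instead of subadditivity of $t \mapsto t^p$, are cosmetic.
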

    \begin{proof}
        \thmendspace
        By the growth bound, there exists some $R_1 > 0$ such that $W'(x) > 0$ for all $x \geq R_1$. And as $W \in W^{1, \infty}_{\mathrm{loc}}(\mathbb{R})$, define $$Q := \norm{W'}_{L^\infty([-R_1, R_1])}.$$
        For any $x$ we can bound
        \begin{align*}
            F_t(x) &= -\int_{(-\infty, x]} W'(x - y) \rho_t(dy) + \int_{(x, \infty)} W'(y - x) \rho_t(dy)\\
            &\leq -\int_{(x - R_1, x]} W'(x - y) \rho_t(dy) + \int_{(x, \infty)} W'(y - x) \rho_t(dy)\\
            &\leq Q + \int_{(x, \infty)} W'(y - x) \rho_t(dy).
        \end{align*}
        Thus, this lemma is reduced to showing that the second term in the final inequality is uniformly bounded (depending only on $M, R$) whenever $x \geq -R$. To that end, note that the upper growth bound $$\limsup_{x \rightarrow \infty} \frac{W'(x)}{|x|^p} < \infty$$ combined with the local boundedness of $W'$ implies that there exists some $\delta > 0, K \in \mathbb{R}$ such that $$W'(x) \leq \delta |x|^p + K \qquad \forall x \geq 0.$$
        Now, letting $\Delta_{p}$ be as in \Cref{lem:momentbounds} we can write:
        \begin{align*}
            \int_{(x, \infty)} W'(y - x) \rho_t(dy) &\leq \delta \int_{(x, \infty)} (|y| + R)^p \rho_t(dy) + K\\
            &\leq \delta \int_{(x, \infty)} (|y|^p + |R|^p) \rho_t(dy) + K\\
            &\leq \delta \Delta_p + \delta |R|^p + K.
        \end{align*}
        Thus, setting $\Lambda_R := Q + \delta (\Delta_p + |R|^p) + K$ concludes the lemma.
    \end{proof}
    To bound the support diameter, we derive a sharper bound at the tail:
    \begin{lemma} \label{lem:tailforcebounds}
        Let $(\rho_t, v_t)$ be as in \Cref{thm:supportbound}. Then there exists some $R > 0$ and $\Lambda < 0$ depending only on $M$ such that $$x \geq R \implies F_t(x) \leq \Lambda < 0.$$ 
    \end{lemma}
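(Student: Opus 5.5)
The plan is to split $F_t(x)$, for $x$ large, into the contribution of mass far to the left of $x$, which is strongly attractive, and the remaining contributions, which are controlled by \Cref{lem:globalforcebounds} and \Cref{lem:momentbounds}. Write
\begin{align*}
F_t(x) = -\int_{(-\infty, x]} W'(x-y)\,\rho_t(dy) + \int_{(x,\infty)} W'(y-x)\,\rho_t(dy).
\end{align*}
We use the center of mass being zero together with the moment bound $\int |y|\,\rho_t(dy)\le \Delta_1$; if $p=0$, the bound on $\int |y|^{q}\,\rho_t(dy)$ with $q\in(0,1]$ serves the same purpose. By Chebyshev, $\rho_t([-L,L]) \ge 1 - \Delta_1/L$, so choosing $L=L(M)=2\Delta_1$ gives that at least half the mass lies in $[-L,L]$, uniformly in $t$.

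\emph{The attractive term.} Take $R_1$ as in the proof of \Cref{lem:globalforcebounds}, so $W'>0$ on $[R_1,\infty)$, and let $Q=\norm{W'}_{L^\infty([-R_1,R_1])}$. By the lower growth bound there are $c>0$ and $R_2\ge R_1$ with $W'(z)\ge c z^p$ for $z\ge R_2$. For $x\ge L+R_2$ and $y\in[-L,L]$ we have $x-y\ge x-L\ge R_2$, so
\begin{align*}
\int_{[-L,L]} W'(x-y)\,\rho_t(dy) \ge \tfrac12 c\,(x-L)^p.
\end{align*}
On the rest of $(-\infty,x]$, the integrand $W'(x-y)$ is either positive (when $x-y\ge R_1$) or bounded below by $-Q$. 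Hence
\begin{align*}
-\int_{(-\infty,x]} W'(x-y)\,\rho_t(dy) \le Q - \tfrac12 c\,(x-L)^p.
\end{align*}

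\emph{The tail term.} As in \Cref{lem:globalforcebounds}, use $W'(z)\le \delta z^p+K$ for $z\ge 0$. For $y>x\ge 0$ we have $y-x\le y$, so
\begin{align*}
\int_{(x,\infty)} W'(y-x)\,\rho_t(dy) \le \delta\int_{(x,\infty)} y^p\,\rho_t(dy) + K\,\rho_t((x,\infty)).
\end{align*}
By \Cref{lem:momentbounds} with $q=p+1$, both $\int_{(x,\infty)} y^p\,\rho_t(dy)\le \Delta_{p+1}/x$ and $\rho_t((x,\infty))\le \Delta_{p+1}/x^{p+1}$ tend to zero uniformly in $t$. Even the crude bound $\delta\Delta_p+K$ works whenever $p>0$, since the attractive term diverges.

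Altogether $F_t(x)\le Q + \delta\Delta_p + K - \tfrac12 c\,(x-L)^p$ for $p>0$, so choosing $R(M)$ large gives $F_t(x)\le -1=:\Lambda$ for all $x\ge R$.

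\emph{Main obstacle: the bounded-force case $p=0$.} Here the attraction is only $\tfrac12 c$ and does not grow, so it must dominate the local repulsion. The fix is to take $x$ so large that mass within distance $R_1$ of $x$ is negligible. Only $y\in(x-R_1,x+R_1)$ can contribute repulsively, and that contribution is at most $Q\,\rho_t((x-R_1,\infty))\le Q\Delta_1/(x-R_1)$. Mass with $y\ge x+R_1$ contributes at most $(\delta+K)\Delta_1/x$, since $W'$ is bounded. Meanwhile the mass in $[-L,L]$ contributes at least $\tfrac12\inf_{z\ge x-L}W'(z)$, and this exceeds $\tfrac12 c$ once $x-L\ge R_2$. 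So for $x\ge R(M)$ large, $F_t(x)\le -\tfrac12 c + o(1) \le -\tfrac14 c=:\Lambda$.

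This refined bookkeeping, bounding the repulsive part by the small tail mass rather than by $Q$, also handles $p>0$ uniformly. I would present the argument in this form throughout, and it is also consistent with \Cref{rem:boundedforceconfinement}.
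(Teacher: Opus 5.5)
Your proposal is correct, and the form you say you would present throughout is essentially the paper's argument: a fixed lower bound on the attraction from the bulk of the mass, the $O(1)$ local repulsion weighted by the vanishing tail mass $\rho_t((x-R_1,\infty))$, and the right-hand tail controlled by the moment bounds. The differences are cosmetic: the paper uses all the mass in $(-\infty,x-R')$ and H\"older's inequality for the right tail, where you use the half of the mass in $[-2\Delta_1,2\Delta_1]$ and the simpler bound $y^p\le y^{p+1}/x$, and your growth-based shortcut for $p>0$ is also valid but unnecessary.
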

    \begin{proof}        
        \thmendspace
        Let $\Delta_1, \Delta_{p+1}$ be as in \Cref{lem:momentbounds}. By Markov's inequality, for any $r > 0$ we have $$\rho_t(\{|y| \geq r\}) \leq \frac{\Delta_1}{r}.$$
        By the lower growth bound, there exists some $R' > 0$, $\delta > 0$ such that $W'(x) \geq \delta$ for all $x \geq R'$. Similar to the previous lemma, by the upper growth bound there exists some $Q > 0$ such that $$W'(x) \leq Q (|x|^p + 1) \qquad \forall x \geq 0.$$
        Now, for any $x, s$ satisfying $x \geq s > R'$, we have:
        \begin{align*}
            F_t(x) &= \int_{\mathbb{R}} W'(y - x) \rho_t(dy)\\
            &\leq -\delta \rho_t((-\infty, x - R')) + \norm{W'}_{L^\infty([-R', R'])} \rho_t([x-R', x]) + \int_{(x, \infty)} W'(y - x) \rho_t(dy).
        \end{align*}
        By Markov's inequality, we can upper bound the first term by $$-\delta \rho_t((-\infty, x - R')) \leq -\delta (1 - \rho_t([s - R', \infty))) \leq - \delta \left(1 - \frac{\Delta_1}{s - R'}\right),$$
        and the second term by $$\norm{W'}_{L^\infty([-R', R'])} \rho_t([x - R', x]) \leq \norm{W'}_{L^\infty([-R', R'])} \frac{\Delta_1}{s - R'}.$$
        To estimate the final term, we use H\"{o}lder's inequality:
        \begin{align*}
            \int_{(x, \infty)} W'(y - x) \rho_t(dy) &\leq \int_{(x, \infty)} Q((y - x)^p + 1) \rho_t(dy)\\
            &\leq Q \rho_t((x, \infty)) + Q \int_{(x, \infty)} |y-x|^p \rho_t(dy)\\
            &\leq Q \cdot \frac{\Delta_1}{s} + Q \left( \int_{(x, \infty)} |y - x|^{p+1} \rho_t(dy) \right)^{p/(1+p)} \left( \int_{(x, \infty)} 1^{1+p} \rho_t(dy) \right)^{1/(1+p)}\\
            &\leq Q \cdot \frac{\Delta_1}{s} + Q \left[\rho_t((x, \infty))\right]^{1 / (1+p)} \left( \int_{(x, \infty)} |y|^{p+1} \rho_t(dy) \right)^{p/(1+p)}\\
            &\leq Q \cdot \frac{\Delta_1}{s} + Q \left(\frac{\Delta_1}{s}\right)^{1 / (1+p)} \Delta_{p+1}^{p/(1+p)}.
        \end{align*}
        The fact that the H\"{o}lder exponent $\frac{p+1}{p}$ is undefined for $p=0$ is not a problem: in that case the final term reduces to just $2Q \cdot \rho_t((x, \infty))$, which can be bounded in terms of $s$ by Markov's inequality. 
        
        Putting this all together, we have that $$F_t(x) \leq -\delta + E(s),$$
        where $E(s)$ is a quantity depending only on $s$ (and data in the hypothesis of \Cref{thm:supportbound}), satisfying $E(s) \rightarrow 0$ as $s \rightarrow \infty$. Thus, it is clear that we can take $s$ large enough such that $F_t(x) \leq -\frac{\delta}{2}$ for all $x \geq s$.

        Thus, setting $R := s$ and $\Lambda := -\frac{\delta}{2}$ completes the lemma.
    \end{proof}

    \subsection{Proof of uniform confinement} \label{subsect:uniformconfinementproof}
    We are now prepared to prove uniform confinement of our SPS systems. The main obstacle is that although \Cref{lem:tailforcebounds} provides a restoring force for particles that are ``far'' from the origin, repeated collisions between particles may transfer additional momentum to the rightmost particle, preventing a uniform diameter bound from being trivially obtained by kinematics of the rightmost particle.
    \restatesupportbound*
    \begin{proof}
        Let $R, \Lambda$ be as in \Cref{lem:tailforcebounds}. Note that it will be convenient to take $R > \max{(1, M, 2 \Delta_1)}$: this can always be done, because the conclusion of \Cref{lem:tailforcebounds} still holds after increasing $R$. Define the function $$\Phi(x) = \begin{cases} 0 & x \leq R \\ -\Lambda(x - R) & x > R \end{cases}$$
        Note that as $\Lambda < 0$, this implies that $\Phi$ is monotone increasing (strictly so for $x \geq R$), and $\lim_{x \rightarrow \infty} \Phi(x) = +\infty$. 

        Now, suppose at any time $t$ the system $(\rho_t, v_t)$ has particles with position, velocity, mass (resp.) given by $$\{x_1(t), \ldots, x_{N(t)}(t)\}, \qquad \{m_1, \ldots, m_{N(t)}\}, \qquad \{v_1(t), \ldots, v_{N(t)}(t)\},$$
        where without loss of generality we can take $x_1(t) < \ldots < x_{N(t)}(t)$. 

        Define the position-adjusted energy to be
        \begin{equation} \label{eqn:modifiedenergy}
            E_j(t) := \frac{1}{2} (v_j(t))_+^2 + \Phi(x_j(t)), \qquad E(t) := \max_j [E_j(t) \cdot \mathds{1}_{[R, \infty)}(x_j(t))].
        \end{equation}
        We now claim that $E(t)$ is uniformly bounded above for all $t$, i.e. there exists some $C_1(M) > 0$ such that $E(t) \leq C_1$ for all $t$. Consider the following cases:
        \begin{enumerate}
            \item $t=0$. $\rho_0$ is supported in $[-M, M]$, and we took $R$ large enough such that $R > M$. So $E_j(0) = \frac{1}{2} (v_j(0))_+^2$ for all $j$, and as $\norm{v_0}_{L^\infty(\rho_0)} \leq M$, this means that $E(0) \leq \frac{1}{2} M^2$. So $E(0)$ is uniformly bounded.
            \item At collisions. Suppose some particles collide. Now, observe that:
            \begin{itemize}
                \item $\Phi$ is identical for all particles involved in the collision, because positions must coincide for a collision to happen. So the resulting sticky particle has the same ``position energy'' $\Phi$ as the incoming particles. 
                \item The velocity of the sticky particle resulting from the collision is a convex combination of the velocities of the incoming particles, and the map $v \mapsto \frac{1}{2} v_+^2$ is convex. Thus, the ``rightwards kinetic energy'' component of the resulting sticky particle is bounded above by the largest ``rightwards kinetic energy'' of any incoming particle.
            \end{itemize}
            Thus, $E(t)$ does not increase at collisions.
            \item Between collisions. As the max in the definition of $E$ only includes particles in the range $[R, \infty)$ at time $t$, the only two ways $E$ can increase are the following:
            \begin{itemize}
                \item Particle $x_i$ enters the region $[R, \infty)$ at time $t$. Then by \Cref{lem:tailvelocitybound}, there exists some universal $\overline{C}(M) > 0$ such that $$(v_i(t))_+ \leq \overline{C}.$$
                And so $$E_i(t) \leq \frac{1}{2} \overline{C}^2 + \Phi(x_i(t)) = \frac{1}{2} \overline{C}^2 + \Phi(R) = \frac{1}{2} \overline{C}^2.$$
                \item Particle $x_i$ satisfies $x_i(t) > R$. We will show that $\dot{E}_i(t) \leq 0$. If $v_i(t) \leq 0$ the situation is trivial: $(v_i(t))_+^2 = 0$ is constant, and $\Phi(x_i(t))$ is decreasing by monotonicity of $\Phi$. So $E_i(t)$ is decreasing at $t$. If $v_i(t) > 0$, we can directly compute $$\dot{E}_i(t) = v_i(t)(\dot{v}_i(t) + \Phi'(x_i(t))) \leq 0,$$
                where the inequality holds because $v_i(t) > 0$ and $\dot{v}_i(t) + \Phi'(x_i(t))$ is non-positive by \Cref{lem:tailforcebounds}.
            \end{itemize}
        \end{enumerate}
        Thus, $E_i(t) \leq \max{(\frac{1}{2} M^2, \frac{1}{2} \overline{C}^2)}$ for all $i$ and all $t$ such that $x_i(t) \geq R$. Therefore, $E(t)$ is uniformly bounded for all time $t$, which implies that $\Phi(x_i(t))$ is uniformly bounded for all $i$ and all $t$.
        
        By coercivity of $\Phi$, this implies that there is some $C(M) > 0$ such that $x_i(t) \leq C$ for all $i$ and $t$, which implies that $\sup (\mathrm{spt} (\rho_t)) \leq C$ for all $t$. 
        
        Since equations \eqref{eq:continuity} and \eqref{eq:momentum} are invariant under the reflection
        $$(\rho_t,v_t)\mapsto\bigl((-\mathrm{id})_\#\rho_t,\,-v_t(-\cdot)\bigr),$$
        the same uniform bound holds for \(\inf(\operatorname{spt}\rho_t)\), completing the proof.
    \end{proof}
    And now we prove the lemma, which completes the proof of \Cref{thm:supportbound}:
    \begin{lemma} \label{lem:tailvelocitybound}
        Let $(\rho_t, v_t), W, M$ be as in \Cref{thm:supportbound}, and let $R, \Lambda$ be as in \Cref{lem:tailforcebounds}. Without loss of generality, take $R > \max{(1, 2 \Delta_1)}$, where $\Delta_1$ is from \Cref{lem:momentbounds} (if we increase $R$, \Cref{lem:tailforcebounds} still holds).
        
        Then there exists some $\tilde{C}(M) > 0$ such that for all time $t$, the following holds: $$v_t(x) \leq \tilde{C} \qquad \forall x \in \mathrm{spt} (\rho_t) \cap [R, \infty).$$
    \end{lemma}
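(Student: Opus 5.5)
The plan is to compare the velocity of a cluster in $[R,\infty)$ with mass-weighted averages of ``free'' Lagrangian velocities of the original particles it contains. Label the initial particles by $j$, with masses $m_j$, trajectories $y_j(t)$ and initial velocities $v_0(y_j(0))$, where $|v_0(y_j(0))|\le M$. Momentum is conserved at every sticky collision. Between collisions each cluster obeys $\dot v = F_t(x)$. Hence for a particle $x_i(t)$ made up of the set $J_i(t)$ of original particles,
\begin{equation*}
v_i(t)=\frac{1}{m_i}\sum_{j\in J_i(t)} m_j\Big(v_0(y_j(0))+\int_0^t F_s(y_j(s))\,ds\Big).
\end{equation*}
This is a convex combination, so it suffices to bound $v_0(y_j(0))+\int_0^t F_s(y_j(s))\,ds$ from above uniformly in $j$, $t$ and the data, for every $j\in J_i(t)$ with $y_j(t)\ge R$. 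The bound must use only $M$, through $\Delta_1,\Delta_{p+1}$, $\Lambda$ and $\Lambda_{R}$, and never the individual masses. This is the whole point: the kinetic-energy bound $\sum m_i v_i^2\le 2\mathcal H(0)$ degenerates for small masses.

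To bound a single path integral, I split $[0,t]$ according to the region occupied by $y_j(s)$. On $\{y_j\ge R\}$, \Cref{lem:tailforcebounds} gives $F_s\le\Lambda<0$, so these times only decrease the integral. On $\{-R\le y_j< R\}$, \Cref{lem:globalforcebounds} gives $F_s\le\Lambda_R$. Since $y_j(t)\ge R$, I look at the last time $\sigma_j$ at which $y_j$ is at most some lower level, say $-R$ (or $0$ if it never goes that low). I then need to control the time that $y_j$ spends in $[-R,R]$ after $\sigma_j$ while its velocity is positive. The natural tool is the same monotone quantity used in the proof of \Cref{thm:supportbound}, now with $\Phi$ replaced by the linear weight $-\Lambda_R\,x$ on $[-R,\infty)$. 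The quantity $\tfrac12 v_+^2-\Lambda_R x$ is nonincreasing along positive-velocity motion in $[-R,\infty)$. It does not increase at collisions, because $v\mapsto \frac12 v_+^2$ is convex and the colliding particles share a common position. So the maximum of this quantity over clusters in $[-R,\infty)$ is controlled by its initial value and by its values at entry through $x=-R$. An upward entry at $-R$ followed by travel across $[-R,R]$ then gains at most $2R\Lambda_R$ in $\tfrac12 v_+^2$.

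The main obstacle is the entry velocity at the lower threshold $-R$, since on $(-\infty,-R)$ the force is only bounded below, by reflection of \Cref{lem:globalforcebounds}, and may push strongly to the right. My first attempt here is a mass argument. Because $R>2\Delta_1$, Markov's inequality gives $\rho_t(\{|y|\ge R\})\le \Delta_1/R<1/2$, so the far-left and far-right regions each carry less than half the mass. I would then estimate the total momentum $P_-(t)$ carried by clusters in $(-\infty,-R]$ together with the impulse they receive. Combining $\int v_t\,d\rho_t=0$, the kinetic-energy bound and Cauchy--Schwarz on the light tails, I expect to obtain a bound on the rightward velocity of any cluster crossing $-R$ that depends only on $M$. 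If this fails, the fallback is to iterate the same two-region comparison with a sequence of thresholds. I would use the symmetric statement (reflection invariance) at the left tail: on $(-\infty,-R]$, the reflected \Cref{lem:tailforcebounds} gives $F\ge-\Lambda>0$. There, a particle moving right is being accelerated, but only for at most the time it takes to cross a bounded region, and that time is itself controlled by the energy bound. Once the entry velocity at $-R$ is bounded by some $C_0(M)$, the monotone quantity gives $\tfrac12 (v_i)_+^2\le \tfrac12 C_0^2+2R\Lambda_R+\tfrac12M^2$ at every point of $[R,\infty)$. This yields $\tilde C(M)$.
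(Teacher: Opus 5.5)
\textbf{There is a genuine gap.} It is exactly the step you flag as the main obstacle: a bound $C_0(M)$ on the rightward speed of clusters entering $[-R,\infty)$ through $-R$. Neither of your suggested routes supplies it.
\begin{enumerate}
\item Kinetic energy plus Cauchy--Schwarz only controls the momentum of the left tail, $\int_{(-\infty,-R]}|v_t|\,d\rho_t\le\sqrt{\rho_t((-\infty,-R])}\sqrt{2\mathcal{H}(0)}$. It does not control the speed of an individual light cluster there, for which you get only $\sqrt{2\mathcal{H}(0)/m}$. That is the very degeneration you set out to avoid.
\item The fallback is circular. On $(-\infty,-R]$ the reflected \Cref{lem:tailforcebounds} gives $F_t\ge-\Lambda>0$, so a light cluster falling in from far to the left gains speed from the work done on it. Bounding its entry speed therefore requires knowing how far left the support can reach. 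That is the reflected form of \Cref{thm:supportbound}, which is itself proved from this lemma.
\item The max-over-clusters bookkeeping cannot absorb such a cluster afterwards. The cluster carries its large value of $\tfrac12 v_+^2-\Lambda_R x$ into its collisions, and the rule ``merged value $\le$ largest incoming value'' discards precisely the slowing caused by merging with heavy matter.
\end{enumerate}
Two lesser points. First, the opening reduction to the free velocities $v_0(y_j(0))+\int_0^tF_s(y_j(s))\,ds$ is not what you go on to prove. These integrate $F$ in time, including arbitrarily long stays in $[-R,R]$ at nonpositive velocity, whereas your comparison only controls work done during rightward motion. Second, the weight $-\Lambda_R x$ has to be frozen for $x\ge R$, using $F_t\le\Lambda<0$ there; otherwise your bound on $\tfrac12 v_+^2$ grows with $x$.

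\textbf{The missing idea} is to bound the mass of the cluster instead of its entry speed, using that trajectories cannot cross. Fix $T$ and a cluster $j$ with $x_j(T)\ge R$. Trace it backwards, choosing at each collision the incoming particle with the largest velocity. This gives a continuous path $X$ whose velocity only jumps downward.

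If $X(t^*)\le-R$ for some $t^*\le T$, then all mass in $[X(t^*),\infty)$ at time $t^*$ lies in $[X(T),\infty)$ at time $T$. Hence $m_j\ge\rho_{t^*}([X(t^*),\infty))-\rho_T((X(T),\infty))$, which by Markov's inequality is at least $1-2\Delta_1/R>0$; this is what $R>2\Delta_1$ is for. The energy bound $\tfrac12 m_j v_j(T)^2\le\mathcal{H}(0)$ then bounds $v_j(T)$ in terms of $M$ alone.

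If instead $X\ge-R$ on $[0,T]$, no entry speed is needed. Let $s$ be the last time at which $\dot X\le 0$, or $s=0$, where $|\dot X|\le M$. On $(s,T]$ the path is increasing and the jumps only lower $\dot X^2$. Integrate $\ddot X\le\Lambda_R$ (from \Cref{lem:globalforcebounds}) against $dX$ over $[X(s),R]$, and $\ddot X\le\Lambda<0$ over $[R,X(T)]$. This gives $(\dot X(T))_+^2\le M^2+4R\Lambda_R$. This second case is essentially the part of your argument that does work.
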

    \begin{proof}
        \thmendspace
        Fix a time $T$, and suppose at time $T$ the system $(\rho_T, v_T)$ has particles with position, velocity, mass (resp.) given by $$\{x_1(T), \ldots, x_{N(T)}(T)\}, \qquad \{m_1, \ldots, m_{N(T)}\}, \qquad \{v_1(T), \ldots, v_{N(T)}(T)\}.$$
        Choose a $j$ that maximizes $(v_j(T))_+$ over all $j$ that satisfy $x_j(T) \geq R$ (if there is no such $j$ then this lemma is vacuously true). In our system, particles collide stickily. Follow the path of collisions for $v_j$ back in time, and at each collision, track the particle involved in the collision that has the \emph{largest} velocity $v(t)$. Call this trajectory $X(t)$, and let the velocity be $V(t) := \dot{X}(t)$ when it exists (note that $V(t)$ exists except at finitely many $t$).
        
        So $X(t)$ is a continuous curve, and $V(t)$ is a piecewise continuous curve with finitely many jump discontinuities $\{t_1, \ldots, t_k\}$, satisfying $V(t_i+) \leq V(t_i-)$ for all $i$.

        Consider the following cases on $\inf_{t \leq T} X(t)$:
        \begin{enumerate}
            \item $\inf_{t \leq T} X(t) \leq -R$. Because $X(t)$ is continuous, let $t^*$ be time where this $\inf$ is attained. Because our center of mass is always $0$, and our first moment is bounded above by some $\Delta_1(M) < \infty$ (\Cref{lem:momentbounds}), Markov's inequality gives us $$\rho_{t^*}((-\infty, -R]) \leq \frac{\Delta_1}{R} \implies \rho_{t^*}((X(t^*), \infty)) \geq 1 - \frac{\Delta_1}{R}.$$
            At time $T > t^*$, we know that $X(T) \geq R$. By Markov's inequality again,
            $$\rho_T((X(T), \infty)) \leq \rho_T((R, \infty)) \leq \frac{\Delta_1}{R}.$$
            Since trajectories for particles cannot cross, we know that $$m_j \geq \rho_{t^*}([X(t^*), \infty)) - \rho_T((X(T), \infty)) \geq 1 - 2 \frac{\Delta_1}{R} > 0.$$
            Now, notice this \emph{lower bound} on $m_j$ is actually uniform. We consequently use a trivial energy bound: $$\frac{1}{2} \left(v_j(T)\right)^2 m_j \leq \mathcal{H}(T) \implies (v_j(T))_+ \leq \sqrt{2 \cdot \left(1 - \frac{2 \Delta_1}{R}\right)^{-1} \cdot \mathcal{H}(T)}.$$
            But, of course, the Hamiltonian is non-increasing so $\mathcal{H}(T) \leq \mathcal{H}(0)$, and $\mathcal{H}(0)$ is bounded above in terms of just $M$. And so $(v_j(T))_+$ is also bounded above by some $C'(M) > 0$ in this case.
            \item $\inf_{t \leq T} X(t) \geq -R$. Then let $\Lambda_R$ be as in \Cref{lem:globalforcebounds}, and as such $X(t)$ solves the differential inequalities 
            \[\begin{cases}
                |X(0)|, |\dot{X}(0)| \leq M\\
                X(t) \geq -R \qquad \forall t\\
                \ddot{X}(t) \leq \Lambda_R \qquad \forall t\\
                \ddot{X}(t) \leq \Lambda < 0 \qquad \text{when } X(t) \geq R,
            \end{cases}\]
            except at finitely many discontinuities $\{t_1, \ldots, t_k\}$, where, by construction, $\dot{X}(t_\ell-) > \dot{X}(t_\ell+)$ for all $\ell \in \{1, \ldots, k\}$. Now, let $$s := \sup \{\tau \leq T : \tau = 0 \text{ or } \dot{X}(\tau) \leq 0\}.$$
            We know that $\dot{X}(t) > 0$ on $(s, T]$, i.e. $X(t)$ is strictly increasing on $t \in (s, T]$ and thus invertible. Furthermore, $\dot{X}(s) \leq M$ and so $$(\dot{X}(T))^2 \leq (\dot{X}(s))^2 + 2 \int_{X(s)}^{X(T)} \ddot{X}(X^{-1}(z)) dz \leq M^2 + 2 \int_{X(s)}^{X(T)} \ddot{X}(X^{-1}(z)) dz.$$
            Notice that whenever $X(t) \geq R$ we have $\ddot{X}(X^{-1}(X(t))) \leq \Lambda < 0$, and as $X(T) \geq R$, we can thus bound
            $$\int_{X(s)}^{X(T)} \ddot{X}(X^{-1}(z)) dz \leq \int_{X(s)}^{R} \ddot{X}(X^{-1}(z)) dz.$$
            And so $(\dot{X}(T))^2\leq M^2 + 4 \Lambda_R R$, which implies $(\dot{X}(T))_+ \leq \sqrt{M^2 + 4 \Lambda_R R}$.
        \end{enumerate}
        We have a uniform bound in each case, and as such we are done.
    \end{proof}
    In light of the proof of \Cref{lem:tailvelocitybound}, the proof of \Cref{corr:velocitybound} is fairly immediate, using the same backwards trajectory tracking trick:
    \restatevelocitybound*
    \begin{proof}
        Let $C(M) > 0$ be as in \Cref{thm:supportbound}, and let $F_t$ be as in \Cref{defn:forceatapoint}. Note that as a posteriori $\mathrm{spt} (\rho_t) \subset [-C, C]$ for all time $t$,  by the fact that $W \in W^{1, \infty}_{\mathrm{loc}}(\mathbb{R})$ we know that there exists some $L > 0$ such that $$|F_t(x)| \leq L \qquad \forall x \in \mathrm{spt} (\rho_t) \qquad \forall t.$$
        Now, we show that there exists some $C'(M) > 0$ such that $v_t(x) \leq C'$ for all $x \in \mathrm{spt} (\rho_t)$ and $t \geq 0$. A symmetric argument will then establish the velocity bound.

        Fix any time $T > 0$, and represent the system at time $T$ as having particles $\{x_1(T), \ldots, x_{N(T)}(T)\}$ with velocities $\{v_1(T), \ldots, v_{N(T)}(T)\}$. Pick an index $j^*$ that maximizes $v_j(T)$ over all $j$. Similar to \Cref{lem:tailvelocitybound}, track the trajectory of particle $j^*$ backwards in time, and at each collision selecting the highest-velocity ancestor.

        This yields a continuous curve $X: [0, T] \rightarrow [-C, C]$, such that $X$ is differentiable on $[0, T]$ except at finitely many times (the collisions): at each such time $s$, we have by construction that $\dot{X}(s-) \geq \dot{X}(s+)$. And moreover, between collisions we know that $\ddot{X}(t) \leq L$. Thus, by a work-energy theorem argument similar to that in \Cref{lem:tailvelocitybound}, we know that $$v_{j^*}(T) = \dot{X}(T) \leq \sqrt{M^2 + 4 C L},$$
        which is a uniform bound.
    \end{proof}
  
    \subsection{Sharper estimates for $p = 1$} \label{subsect:comtracking}
    Throughout this section we assume \emph{only} the following growth bound: $$\liminf_{x \rightarrow \infty} \frac{W'(x)}{|x|} > 0.$$
    We emphasize that the arguments in earlier sections still hold for $p=1$ if both a lower and upper bound on the growth of $W'$ are assumed; here we show that in the case of $p=1$ the upper bound on the growth of $W'$ is superfluous.

    Recall that in the proof of \Cref{thm:supportbound}, we obtained (in \Cref{subsect:lemmas} and \Cref{subsect:uniformconfinementproof}) bounds on velocities $v_i(t)$ and accelerations $\dot{v}_i(t)$ of particles when the position $x_i(t)$ is large. We then defined the position-adjusted energy \Cref{eqn:modifiedenergy}, and utilized earlier bounds to show that such a modified energy is bounded. In this section, we will develop an appropriate analogue to the position-adjusted energy \Cref{eqn:modifiedenergy} that remains uniformly bounded in the $p=1$ case even \emph{without} assuming an upper bound on the growth of $W'$.
    
    The following object will be important in the sequel:
    \begin{definition} \label{defn:rightcenterofmass}
        Let our system at time $t$ have particles with position, mass, velocity (resp.) given by $$\{x_1(t), \ldots, x_{N(t)}(t)\}, \qquad \{m_1, \ldots, m_{N(t)}\}, \qquad \{v_1(t), \ldots, v_{N(t)}(t)\},$$
        and without loss of generality assume that $x_1(t) < \ldots < x_{N(t)}(t)$.
        Define the quantities $c_j(t)$ and $M_j(t)$ as follows:
        $$c_j(t) := \fint_{[x_j(t), \infty)} x \, \rho_t(dx) \qquad M_j := \rho_t([x_j(t), \infty)).$$
        Roughly, this is the ``rightwards center of mass'' of $x_j$. Note that we do not notate $M_j$ to depend on time, as it is constant between sticky collisions, and at sticky collisions we re-index.
    \end{definition}
    
    The following lemma is analogous to \Cref{lem:tailforcebounds}.
    \begin{lemma} \label{lem:clustertailforcebounds}
        Let $(\rho_t, v_t)$ be as in \Cref{thm:supportbound}, and $$\{x_1(t), \ldots, x_{N(t)}(t)\}, \{m_1, \ldots, m_{N(t)}\}, \{v_1(t), \ldots, v_{N(t)}(t)\}, \{c_1(t), \ldots, c_{N(t)}(t)\}, \{M_1, \ldots, M_{N(t)}\}$$ be as in \Cref{defn:rightcenterofmass}. Then there exists some $R > 0$ and $\Lambda < 0$ such that for all $j$ satisfying $M_j \leq \frac{1}{2}$, we have $$c_j(t) \geq R \implies \ddot{c}_j(t) \leq \Lambda < 0.$$
    \end{lemma}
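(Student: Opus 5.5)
The plan is to compute $\ddot{c}_j$ directly from Newton's law between collisions and to exploit two facts: the forces internal to the cluster $\{x_i : i \geq j\}$ cancel because $W'$ is odd, and the global center of mass vanishes. Together these turn the tail force into an exact expression in which the linear lower growth of $W'$ produces a restoring term proportional to $c_j$. No upper growth bound and no moment estimate should be needed.

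\textbf{Step 1 (dynamics of $c_j$).} On any time interval free of collisions the set $\{i : i \geq j\}$ is fixed and $M_j$ is constant. Writing $M_j c_j = \sum_{i \geq j} m_i x_i$, we get
\[
M_j \ddot{c}_j = -\sum_{i \geq j} \sum_{k} m_i m_k W'(x_i - x_k).
\]
By oddness of the fixed representative of $W'$, the terms with both $i, k \geq j$ cancel pairwise. This leaves
\[
\ddot{c}_j(t) = -\frac{1}{M_j} \int_{[x_j, \infty)} \int_{(-\infty, x_j)} W'(x - y)\, \rho_t(dy)\, \rho_t(dx).
\]
Collisions among particles of the cluster conserve momentum, so $\dot{c}_j$ is continuous across them. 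Collisions with $x_{j-1}$ trigger the re-indexing noted after \Cref{defn:rightcenterofmass}. The claim is thus understood at non-collision times.

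\textbf{Step 2 (linear lower bound on $W'$).} By the hypothesis $\liminf_{x \to \infty} W'(x)/x > 0$, there exist $\delta > 0$ and $R' > 0$ with $W'(z) \geq \delta z$ for $z \geq R'$. Since $W \in W^{1,\infty}_{\mathrm{loc}}$, $W'$ is bounded on $[0, R']$. Hence there is $K \geq 0$ with
\[
W'(z) \geq \delta z - K \qquad \text{for all } z \geq 0.
\]
In the integral of Step 1 we always have $x \geq x_j > y$, so $z = x - y > 0$ and the bound applies.

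\textbf{Step 3 (exact evaluation via zero center of mass).} Inserting the bound from Step 2 gives
\[
\iint W'(x - y) \geq \delta \iint (x - y) - K M_j (1 - M_j).
\]
Next, $\iint (x - y) = (1 - M_j) M_j c_j - M_j \int_{(-\infty, x_j)} y\, \rho_t(dy)$. Because $\int y\, \rho_t(dy) = 0$ (as in the proof of \Cref{lem:momentbounds}), the last integral equals $-M_j c_j$, so $\iint (x - y) = M_j c_j$. Therefore
\[
\ddot{c}_j(t) \leq -\delta c_j(t) + K(1 - M_j) \leq -\delta c_j(t) + K.
\]
Choosing $R := (K + 1)/\delta$ and $\Lambda := -1$, we get $c_j(t) \geq R \implies \ddot{c}_j(t) \leq -1$.

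\textbf{Where the difficulty lies.} The argument has no genuinely hard analytic step; the main point requiring care is Step 1. One must justify that internal forces cancel, using a fixed odd representative of $W'$ everywhere, including $W'(0) = 0$ for coincident positions. One must also treat collision times consistently, so that $\ddot{c}_j$ is only claimed where it exists and $\dot{c}_j$ does not jump within the cluster. The assumption $M_j \leq \frac{1}{2}$ is not essential to this computation beyond guaranteeing $1 - M_j > 0$, so that there is outside mass to pull on. It is presumably retained for the subsequent use of the lemma.
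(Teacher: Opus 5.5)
Your proof is correct, and it takes a different, simpler route than the paper's. You both start from the same expression for $\ddot c_j$ and the same affine lower bound $W'(z)\geq \delta z-K$ on $[0,\infty)$ (the paper's $K=\delta R'+Q$).

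The paper then uses the first-moment bound $\Delta_1$ of \Cref{lem:momentbounds} and Markov's inequality to choose a threshold $s$. It splits into the cases $x_j(t)\geq s$ and $x_j(t)\leq s$. In both cases it needs $M_j\leq\frac12$, to guarantee that at least half the mass lies to the left of the cluster. In the second case it replaces the left positions $y$ by the crude bound $y\leq s$, arriving at $\ddot c_j\leq -\frac{\delta}{2}c_j+\delta(s+R')+2Q$.

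You instead keep exact track of where the left mass sits. Because the total center of mass vanishes, $\int_{[x_j,\infty)}\int_{(-\infty,x_j)}(x-y)\,\rho_t(dy)\,\rho_t(dx)=M_jc_j$ exactly, and integrating the affine bound gives $\ddot c_j\leq-\delta c_j+K(1-M_j)$ in one line. This buys you several things:
\begin{itemize}
\item you need no casework, no Markov inequality and no moment estimate, so $R$ and $\Lambda$ depend only on $W$ and not on the data bound $M$;
\item your bound holds for every $j$. This shows that the restriction $M_j\leq\frac12$ is not needed for this estimate, and neither is the paper's later remark that small $M_j$ is ``really'' needed to control the acceleration.
\end{itemize}

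The paper's argument essentially transplants the structure of \Cref{lem:tailforcebounds}: Markov-type tail control plus a sign condition on $W'$ at long range. It therefore never exploits the exact linearity of the leading term, which is precisely what makes your version work. A minor point: even $1-M_j>0$ is not needed, since $M_j=1$ forces $c_j=0$ and the implication is vacuous.
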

    \begin{proof}
        \thmendspace
        Let $\Delta_1 > 0$ be as in \Cref{lem:momentbounds}.
        
        We can write $$\ddot{c}_j(t) = -\fint_{[x_j(t), \infty)} \int_{(-\infty, x_j(t))} W'(x - y) \rho_t(dy) \rho_t(dx).$$
        By the $p=1$ growth bound, there exists some $R' > 1$ and $\delta > 0$ such that $W'(x) \geq \delta x$ for all $x \geq R'$. And as $W \in W^{1, \infty}_{\mathrm{loc}}(\mathbb{R})$, let $$Q := \norm{W'}_{L^\infty([-R', R'])},$$
        which also implies that $$W'(x) \geq \delta x - (\delta R' + Q) \qquad \forall x \geq 0.$$
        By Markov's inequality, for any $s > R'$ we have $\rho_t(\{|y| \geq s - R'\}) \leq \frac{\Delta_1}{s - R'}$. So we can choose $s$ large enough, such that $$\rho_t(\{|y| \geq s - R'\}) \leq \frac{\delta}{4(Q + \delta)}.$$
        Now, to bound $\ddot{c}_j(t)$, we can do casework on the leftmost point of the cluster $x_j(t)$:
        \begin{itemize}
            \item $x_j(t) \geq s$. Then we can bound 
            \begin{align*}
                \ddot{c}_j(t) \leq& -\fint_{[x_j(t), \infty)} \int_{(-\infty, x_j(t) - R']} W'(x - y) \rho_t(dy) \rho_t(dx)\\
                &- \fint_{[x_j(t), \infty)} \int_{(x_j(t) - R', x_j(t))} W'(x - y) \rho_t(dy) \rho_t(dx)\\
                \leq& \frac{\delta}{4(Q + \delta)} Q - \left(\frac{1}{2} - \frac{\delta}{4(Q + \delta)} \right) \delta = -\frac{\delta}{4}.
            \end{align*}
            And $x_j(t) \geq s$ implies $c_j(t) \geq s$, and so in this case the lemma holds for $R := s$ and $\Lambda = -\frac{\delta}{4}$.
            \item $x_j(t) \leq s$. As the repulsive component of $W'$ is bounded by $Q$, we can split and write:
            \begin{align*}
                \ddot{c}_j(t) \leq& -\frac{1}{M_j}\int_{[x_j(t), s)} \int_{(-\infty, x_j(t))} W'(x - y) \rho_t(dy) \rho_t(dx)\\
                &-\frac{1}{M_j}\int_{[s, \infty)} \int_{(-\infty, x_j(t))} W'(x - y) \rho_t(dy) \rho_t(dx).
            \end{align*}
            Write the first term as $A$, and second term as $B$, such that $\ddot{c}_j(t) \leq A + B$. Estimating $A$, we have the trivial bound
            $$A \leq \frac{1}{M_j} \int_{[x_j(t), s)} Q \rho_t(dx) \leq \frac{1}{M_j} Q \cdot \rho_t([x_j(t), s)) \leq Q.$$
            Estimating $B$, we get 
            \begin{align*}
                B &\leq -\frac{\delta}{M_j} \int_{[s, \infty)} \int_{(-\infty, x_j(t))} (x - y) \rho_t(dy) \rho_t(dx) + \frac{1}{M_j} (\delta R' + Q) \cdot \rho_t([s, \infty))\\
                &\leq -\frac{\delta}{M_j} \int_{[s, \infty)} \rho_t((-\infty, x_j(t))) \cdot (x - s) \rho_t(dx) + \frac{1}{M_j} (\delta R' + Q) \cdot M_j\\
                &\leq -\frac{\delta}{M_j} \int_{[s, \infty)} \frac{1}{2} \cdot (x - s) \rho_t(dx) + (\delta R' + Q)\\
                &\leq -\frac{\delta}{2} \frac{1}{M_j} \int_{[s, \infty)} x \rho_t(dx) + \frac{\delta}{2} \cdot s + (\delta R' + Q).
            \end{align*}
            We want $c_j(t)$ to be ``coercive'' on $B$. So we can re-write:
            \begin{align*}
                B &\leq -\frac{\delta}{2} c_j(t) + \frac{\delta}{2} \left( c_j(t) - \frac{1}{M_j} \int_{[s, \infty)} x \rho_t(dx) \right) + \frac{\delta}{2} s + (\delta R' + Q)\\
                &\leq -\frac{\delta}{2} c_j(t) + \frac{\delta}{2} \frac{1}{M_j} \left( \int_{[x_j(t), s)} x d\rho_t(x) \right) + \frac{\delta}{2} s + (\delta R' + Q)\\
                &\leq -\frac{\delta}{2} c_j(t) + \frac{\delta}{2} s + \frac{\delta}{2} s + (\delta R' + Q).
            \end{align*}
            And thus we arrive at $$\ddot{c}_j(t) \leq -\frac{\delta}{2} c_j(t) + \delta (s + R') + 2Q.$$
            So we can choose \emph{some} $R > s$ (uniformly) such that $c_j(t) \geq R \implies \ddot{c}_j(t) \leq \Lambda := -\frac{\delta}{2}$.
        \end{itemize}
        And so taking the max of $R$ and $\Lambda$ over both cases completes the lemma.
    \end{proof}
    The only thing that is different in this case is that we really do need $M_j$ to be uniformly small (say, $M_j \leq \frac{1}{2}$) to have any control over its acceleration. This is not an obstruction to using an analogous argument as in \Cref{thm:supportbound}: in the case of $M_j > \frac{1}{2}$ a straightforward energy argument establishes the following:
    \begin{lemma}
        Let $(\rho_t, v_t)$ be as in \Cref{thm:supportbound}, and $$\{x_1(t), \ldots, x_{N(t)}(t)\}, \{m_1, \ldots, m_{N(t)}\}, \{v_1(t), \ldots, v_{N(t)}(t)\}, \{c_1(t), \ldots, c_{N(t)}(t)\}, \{M_1, \ldots, M_{N(t)}\}$$ be as in \Cref{defn:rightcenterofmass}. Then there exists some $L > 0$ such that for all $j$ satisfying $M_j > \frac{1}{2}$, the following must be true: $$c_j(t) \leq L \qquad \text{and} \qquad \dot{c}_j(t) \leq L.$$
    \end{lemma}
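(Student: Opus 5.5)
The plan is to use only the two conserved-or-decreasing quantities already available: the moment bounds of \Cref{lem:momentbounds} (in the $p=1$ setting, only the lower growth bound is used, and that argument only needed the lower bound on $W$) and the non-increasing Hamiltonian $\mathcal{H}(t)\le\mathcal{H}(0)$, which is bounded in terms of $M$. Throughout, fix $j$ with $M_j>\frac12$, so the cluster $[x_j(t),\infty)$ carries more than half of the total mass.

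\emph{Position bound.} First, $c_j(t)\le \frac{1}{M_j}\int_{[x_j,\infty)}|x|\,\rho_t(dx)\le 2\Delta_1$, so $L\ge 2\Delta_1$ suffices for the first inequality. This step is immediate.

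\emph{Velocity bound.} I would write $\dot c_j(t)$ as the mass-average of the velocities on the cluster,
\[
\dot c_j(t)=\frac{1}{M_j}\sum_{i\ge j} m_i v_i(t).
\]
This holds between collisions, since $M_j$ and the index set are then fixed and $\dot x_i=v_i$. At collision times it holds for one-sided derivatives as well: momentum is conserved, so the mass-weighted velocity sum is unchanged whenever a colliding group lies entirely inside or entirely outside the cluster. The one case needing care is a collision in which $x_{j-1}$ merges with $x_j$. Then the cluster is re-indexed, and $M_j$ jumps to a new value that is still larger than $\frac12$, so the bound still applies to the new index. Having the formula, Cauchy--Schwarz gives
\[
|\dot c_j(t)|\le \frac{1}{M_j}\Bigl(\sum_{i\ge j} m_i\Bigr)^{1/2}\Bigl(\sum_{i} m_i v_i^2\Bigr)^{1/2}\le \frac{1}{\sqrt{M_j}}\sqrt{2\mathcal{H}(t)}\le \sqrt{2}\,\sqrt{2\mathcal{H}(0)},
\]
using $W\ge0$, so that the kinetic energy is at most $\mathcal{H}$. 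Taking $L:=\max\bigl(2\Delta_1,\,2\sqrt{\mathcal{H}(0)}\bigr)$ then works, and since $\mathcal{H}(0)$ is bounded by a constant depending only on $M$, so is $L$.

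\emph{Main obstacle.} The only delicate point is justifying that $\dot c_j$ makes sense and equals the averaged momentum, given that $c_j$ is defined relative to a moving, re-indexed left endpoint. I would handle this by interpreting $\dot c_j$ as the one-sided derivative between collisions, which is the only sense in which it is used in the subsequent argument. The bound is then pointwise in $t$ and needs no integration in time, so the jumps at collisions cause no further difficulty.
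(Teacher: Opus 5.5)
Your proof is correct and follows essentially the same route as the paper: the position bound $c_j\le 2\Delta_1$ comes from the first-moment estimate, and the velocity bound comes from writing $\dot c_j$ as the tail's average momentum, applying Cauchy--Schwarz, and using $\mathcal{H}(t)\le\mathcal{H}(0)$ with $W\ge 0$; your factor $1/\sqrt{M_j}$ only gives a slightly sharper constant than the paper's $8\mathcal{H}(0)$. Your remark that after re-indexing ``$M_j$ jumps to a new value still larger than $\frac12$'' is imprecise, since index $j$ then refers to a different, lighter tail, but it is harmless because, as you note, the estimate is pointwise in $t$.
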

    \begin{proof}
        Let $\Delta_1$ be as in \Cref{lem:momentbounds}, which gives $$|c_j(t)| \leq 2 \left| \int_{[x_j(t), \infty)} y \rho_t(dy) \right| \leq 2 \int_{[x_j(t), \infty)} |y| \rho_t(dy) \leq 2 \Delta_1.$$
        Thus, $c_j(t)$ is uniformly bounded whenever $M_j > \frac{1}{2}$. Next, we can bound $$|\dot{c}_j(t)|^2 \leq \left( 2 \int_{[x_j(t), \infty)} v_t(y) \rho_t(dy) \right)^2 \leq 4 \int_{\mathbb{R}} |v_t(y)|^2 \rho_t(dy).$$
        But the right hand side is bounded above by $8\mathcal{H}(t) \leq 8 \mathcal{H}(0)$, which is uniformly bounded in terms of $M$.
    \end{proof}
    In particular this implies that for ``massive'' clusters, the position-adjusted energy (as in \Cref{eqn:modifiedenergy}) is uniformly bounded.
    
    To run the case split in \Cref{lem:tailvelocitybound}, we need the following lemma. This is analogous to \Cref{lem:globalforcebounds}.
    \begin{lemma}\label{lem:clusterglobalforcebounds}
        Let $(\rho_t, v_t)$ be as in \Cref{thm:supportbound}, and $\{c_1(t), \ldots, c_{N(t)}(t)\}$ be as in \Cref{defn:rightcenterofmass}. Then there exists some $\overline{\Lambda} > 0$ such that $$\ddot{c}_j(t) \leq \overline{\Lambda} \qquad \forall t \quad \forall j.$$
    \end{lemma}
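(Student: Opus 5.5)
Starting from the formula obtained in the proof of \Cref{lem:clustertailforcebounds}, we have
$$\ddot{c}_j(t) = -\fint_{[x_j(t), \infty)} \int_{(-\infty, x_j(t))} W'(x - y)\, \rho_t(dy)\, \rho_t(dx).$$
Internal forces within the cluster $[x_j(t),\infty)$ cancel by oddness of $W'$. Only the interaction with mass strictly to the left survives, and in it we always have $x - y > 0$. The idea is therefore to bound $-W'(x-y)$ from above using only the repulsive part of $W'$ on positive arguments. No upper growth bound is needed, which is why this works in the $p=1$ setting.

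First, by the $p=1$ growth assumption there is $R' > 0$ with $W'(z) \geq 0$ (indeed $\geq \delta z$) for all $z \geq R'$. Set $Q := \norm{W'}_{L^\infty([-R',R'])}$, finite since $W \in W^{1,\infty}_{\mathrm{loc}}$. Then for every $z > 0$ we have $-W'(z) \leq Q$. This holds trivially for $z \leq R'$, and for $z > R'$ because $-W'(z) \leq 0$.

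Second, substitute this pointwise bound into the double integral:
$$\ddot{c}_j(t) \leq \fint_{[x_j(t),\infty)} Q\, \rho_t((-\infty, x_j(t)))\, \rho_t(dx) \leq Q.$$
So one may take $\overline{\Lambda} := \max(Q,1)$, which is uniform in $t$, $j$ and the initial data.

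The remaining point, and the only mild obstacle, is justifying the formula for $\ddot{c}_j$ between collisions. Note that $M_j$ is constant between collisions, so $\dot{c}_j = \frac{1}{M_j}\sum_{x_i \geq x_j} m_i v_i$ and $\ddot{c}_j = \frac{1}{M_j}\sum_{x_i\geq x_j} m_i F_t(x_i)$. The cancellation of the self-interaction terms $i,k \geq j$ uses oddness of the fixed representative of $W'$, including $W'(0)=0$.

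At collision times $c_j$ is understood piecewise, as the paper does elsewhere. Collisions internal to the cluster preserve its momentum. Collisions with particles entering from the left re-index $j$. Since the lemma asserts a bound only where $\ddot{c}_j$ exists, these events need no further treatment.
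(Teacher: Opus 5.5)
Your proposal is correct and follows the paper's argument. After cancelling the internal forces by oddness of $W'$, you bound $-W'(x-y)$ for $x-y>0$ by $\norm{W'}_{L^\infty([-R',R'])}$, using that $W'>0$ beyond the growth threshold, which is exactly what the paper does. Your pointwise form ($-W'(z)\le Q$ for every $z>0$) is slightly cleaner than the paper's intermediate restriction of $y$ to $(x_j(t)-R,x_j(t))$, since it also covers pairs in that window with $x-y\geq R$; and taking $\max(Q,1)$ guarantees $\overline{\Lambda}>0$.
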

    \begin{proof}
        By the growth condition, there exists some $R > 0$ such that $W'(x) > 0$ for all $x \geq R$. Let $$\overline{\Lambda} := \norm{W'}_{L^\infty([-R, R])},$$
        and thus $$\ddot{c}_j(t) \leq -\fint_{[x_j(t), \infty)} \int_{(x_j(t) - R, x_j(t))} W'(x - y) \rho_t(dy) \rho_t(dx) \leq \overline{\Lambda}.$$
    \end{proof}
    For any sticky collisions, the leftmost particle has the highest velocity. This immediately yields the following observation:
    \begin{lemma} \label{lem:comatcollisions}
        Let $(\rho_t, v_t)$ be a sticky-particle solution to pressureless Euler as in \Cref{thm:supportbound}, and suppose at time $t$ the particles $x_i, \ldots, x_j$ collide, for any $i < j$. Let the state of the system before collision be $$\{x_1(t-), \ldots, x_{N(t-)}(t-)\}, \quad \{m_1, \ldots, m_{N(t-)}\}, \quad \{v_1(t-), \ldots, v_{N(t-)}(t-)\}, \quad \{c_1(t-), \ldots, c_{N(t-)}(t-)\}$$ as defined in \Cref{defn:rightcenterofmass}, and similarly let the state of the system after collision be  $$\{x_1(t), \ldots, x_{N(t)}(t)\}, \quad \{m^*_1, \ldots, m^*_{N(t)}\}, \quad \{v_1(t), \ldots, v_{N(t)}(t)\}, \quad \{c_1(t), \ldots, c_{N(t)}(t)\}$$
        That is, particles $\{x_i, \ldots, x_j\}$ collide to form the new particle $i$. Then the following holds: $$c_i(t-) = c_i(t) \qquad \text{and} \qquad \dot{c}_i(t-) = \dot{c}_i(t).$$
    \end{lemma}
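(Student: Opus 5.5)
The plan is to compute both quantities directly from their definitions. The only inputs will be that positions are continuous in time and that momentum is conserved at collisions; no estimate from the earlier lemmas is needed.

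\emph{Continuity of $c_i$.} Write $c_i(t) = \frac{1}{M_i}\int_{[x_i(t),\infty)} x\,\rho_t(dx)$. Before the collision, the set $[x_i(t-),\infty)$ carries the particles $x_i,\ldots,x_{N(t-)}$. After the collision it carries the merged particle (the new index $i$) together with all particles to its right.

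\begin{itemize}
\item \emph{Mass.} The particles to the right of the colliding group, i.e.\ $x_{j+1},\ldots$ before and correspondingly re-indexed after, are unaffected by the collision. The merged mass is $m^*_i = m_i + \cdots + m_j$. Hence $M_i$ is the same before and after.
\item \emph{First moment.} At the collision time all of $x_i,\ldots,x_j$ occupy one common location $x^*$. The contribution of the group before the collision is $\sum_{k=i}^j m_k x_k(t-) = m^*_i x^*$, which is exactly the contribution of the merged particle after.
\end{itemize}

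By continuity of trajectories the remaining particles contribute identically on both sides. If several disjoint groups collide at the same time $t$, each group is handled the same way. This gives $c_i(t-) = c_i(t)$.

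\emph{Continuity of $\dot c_i$.} Between collisions the masses are fixed, so
\[
\dot c_i(t) = \frac{1}{M_i}\sum_{k\ge i} m_k v_k(t),
\]
and the one-sided limits are given by the same formula with $v_k(t\pm)$. The sticky rule sets the merged velocity to $v_i(t) = \frac{1}{m^*_i}\sum_{k=i}^j m_k v_k(t-)$, so the group's momentum is preserved: $m^*_i v_i(t) = \sum_{k=i}^j m_k v_k(t-)$. The particles outside the colliding group keep their velocities. Hence the sum defining $\dot c_i$ is unchanged across $t$, which gives $\dot c_i(t-) = \dot c_i(t)$, interpreted as the right derivative at $t$.

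\emph{Main obstacle.} The only delicate point is bookkeeping: the set $[x_i(t),\infty)$ must contain exactly the same mass before and after the collision. This holds because particle $i$ is the leftmost member of the colliding group, which is why the paper remarks that the leftmost particle is the relevant one. Had we tracked the center of mass of $[x_k,\infty)$ for some $i<k\le j$, it would jump, since part of the group would enter or leave that set. With the leftmost index, both conservation laws apply to the whole group at once and the argument is complete.
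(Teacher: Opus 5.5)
Your proposal is correct and follows essentially the same argument as the paper. Because the merged particle keeps the leftmost index $i$, the tail $[x_i,\infty)$ has the same mass before and after the collision, and by the sticky rule the same total momentum; continuity of positions gives the same first moment. Your version writes out the first-moment bookkeeping that the paper dismisses as ``analogous,'' and it also covers several disjoint groups colliding at the same time.
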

    \begin{proof}
        Between collisions, $\dot{c}_i(t)$ is the total momentum of the tail divided by its mass $M_i$. At a collision at time $t$ of particles $i, \ldots, j$ (recall that we take particles ordered left-to-right, and this collision produces a new particle also indexed $i$), the sticky collision rule gives $$m_i^* v_i(t) = \sum_{k=i}^j m_kv_k(t-) \qquad m_i^* = \sum_{k=i}^j m_k.$$
        Since the total mass and momentum of the right tail are unchanged across the collision, we obtain $\dot{c}_i(t) = \dot{c}_i(t-)$. The continuity of $c_i(t)$ across collisions follows analogously.
    \end{proof}
    With these lemmas, the same proof strategy used for \Cref{thm:supportbound} and \Cref{lem:tailvelocitybound} applies, and only requires us to replace $x_j(t)$ with $c_j(t)$, and track collisions by using \Cref{lem:comatcollisions}. In particular, one can replace the position-adjusted energy \Cref{eqn:modifiedenergy} with
    \begin{equation}
        E_j(t) := \frac{1}{2} (\dot{c}_j(t))_+^2 + \Phi(c_j(t)), \qquad E(t) := \max_j [E_j(t) \cdot \mathds{1}_{[R, \infty)}(c_j(t))],
    \end{equation}
    Upon concluding that $E(t)$ is uniformly bounded for all time, the uniform boundedness of the support is immediate by the fact that $E_j(t)$ is coercive on $c_j(t)$ for all $j$, and for a system with $n$ particles the rightmost particle satisfies $c_n(t) = x_n(t)$, i.e. the rightmost center of mass is just the particle position.

\section{Conditional expectation formula for discrete solutions}
In this section we revisit the so-called ``conditional expectation formula" from \cite{Hynd2019}, which encapsulates the connection between the evolution of the Lagrangian map $X(t,y_i)$ which tracks the location at time $t$ of the particle initially sitting at location $y_i$ and the initial data $(\rho_0,v_0)$. 
Consider 
$$\rho_0:=\sum_{i=1}^nm_i\delta_{y_i},$$
where $y_1<...<y_n$, $m_i>0$, $m_1+...+m_n=1$. Set $v_i:=v_0(y_i)$ for $i=1,...,n$. As we mentioned in the introduction, while no collision occurs, the trajectory $\gamma_i$ of the particle starting at $y_i$ with initial velocity $v_i$ satisfies
\begin{equation}\label{ODEs}\ddot{\gamma}_i(t)=-\sum m_jW'(\gamma_i(t)-\gamma_j(t)), \ \gamma_i(0)=y_i,\ \dot\gamma_i(0)=v_i.
\end{equation}
 When trajectories intersect, these paths $\gamma_i$ must be modified. Due to the requirement that particles undergo perfectly inelastic collisions upon collision,  we impose that $\gamma_i\equiv\gamma_j $ for all subsequent times. Taken from \cite{Hynd2019}, the following proposition spells out some basic properties of these trajectories.
\begin{proposition}\label{gammas} There exist continuous, piecewise $C^2$ paths
$\gamma_1$,..., $\gamma_n:[0,\infty)\rightarrow\mathbb{R}$ with the following properties:\\
\noindent (i) For $i=1,...,n$ and all but finitely many $t\in(0,\infty)$, \eqref{ODEs} holds.\\
\noindent (ii) For $i=1,...,n$ we have $\gamma_i(0)=y_i$ and $\dot\gamma_i(0+)=v_i$.\\
\noindent (iii) If $\gamma_i(s)=\gamma_j(s)$ for some $s\geq0$, then $\gamma_i(t)=\gamma_j(t)$ for all $t\geq s$.\\
\noindent (iv) If $t>0$, $\{i_1,...,i_k\}\subset\{1,...,n\}$, and $\gamma_{i_1}(t)=...=\gamma_{i_k}(t)\neq\gamma_i(t)$ for $i\notin\{i_1,...,i_k\}$, then
$$\dot\gamma_{i_j}(t+)=\frac{m_{i_1}\dot\gamma_{i_1}(t-)+...+m_{i_k}\dot\gamma_{i_k}(t-)}{m_{i_1}+...+m_{i_k}}.$$
\end{proposition}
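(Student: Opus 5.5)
We construct the paths by induction on collision times, using only local existence and uniqueness for the ODE system \eqref{ODEs} between collisions. Since $W\in C^{0,1}_{\mathrm{loc}}$ and the current section takes $W\in C^1$, the map $W'$ is continuous (Lipschitz if $W\in C^{1,1}_{\mathrm{loc}}$). Even with mere continuity we can use Peano's theorem and select a solution; this is all that the existence statement requires.

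\emph{Step 1: one stage.} Start with $n$ distinct ordered particles $y_1<\dots<y_n$ with velocities $v_i$. We solve \eqref{ODEs} on a maximal interval on which the trajectories stay strictly ordered. The right-hand side is bounded on bounded sets, so the solution cannot blow up in finite time while the particles stay in a bounded region. Moreover \Cref{thm:supportbound} and \Cref{corr:velocitybound} (or simply a Gr\"onwall bound using local boundedness of $W'$) give a priori bounds on positions and velocities. Hence the solution extends until the first time $t_1\in(0,\infty]$ at which $\gamma_i(t_1)=\gamma_{i+1}(t_1)$ for some $i$. The limits $\dot\gamma_i(t_1-)$ exist because $\ddot\gamma_i$ is bounded on $[0,t_1)$.

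\emph{Step 2: restart.} At $t_1$ we group together the particles that coincide. Each group becomes a single particle with the combined mass, and its velocity is given by the momentum-averaging formula in (iv). We then restart Step 1 with strictly fewer particles. In this restarted system the force on a cluster of mass $m_{i_1}+\dots+m_{i_k}$ located at $\gamma$ is $-\sum_j m_j W'(\gamma-\gamma_j)$, summed over all original labels $j$. We check that this sum is consistent with each $\gamma_{i_\ell}$ obeying \eqref{ODEs} individually. This holds because $W'$ is odd with the fixed representative $W'(0)=0$, so the self-interaction terms within a cluster vanish. We define $\gamma_{i_\ell}\equiv\gamma$ after $t_1$, which gives (iii) at this stage.

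\emph{Step 3: termination.} Each collision strictly reduces the number of distinct particles, so at most $n-1$ collision times occur. The paths are therefore continuous and $C^2$ off a finite set, which gives (i) and (ii). Property (iii) holds because we never separate merged particles and because between collisions the ordering is strict. Property (iv) is exactly the restart rule.

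The main obstacle is Step 1 when $W'$ is only continuous, where uniqueness can fail. We handle it by noting that the proposition asserts existence only, so any Peano solution can be used at each stage. A second, minor point is that simultaneous multi-particle collisions must be grouped correctly; this is done by partitioning the particles into the equivalence classes of equal positions at $t_1$.
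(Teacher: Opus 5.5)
The paper gives no proof of this proposition; it imports it from \cite{Hynd2019}. Your collision-by-collision construction is the standard argument behind it. Steps 2 and 3 are fine: $W$ even and $C^1$ gives $W'$ odd and continuous, hence $W'(0)=0$, so every label in a merged cluster satisfies \eqref{ODEs}; the restart rule gives (iv); and at most $n-1$ merges occur. Your restriction to $W\in C^1$ is essential for the Peano step, since for the merely Lipschitz potentials of Section 2 the right-hand side of \eqref{ODEs} is discontinuous.

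The gap is in Step 1, in the claim that each stage runs either forever or until a collision. Local boundedness of $W'$ gives no Gr\"onwall bound, and without a structural assumption the statement is false. Take $W(x)=-x^4/4$ and two particles of mass $\tfrac12$ at $0$ and $1$, at rest. The separation $d=\gamma_2-\gamma_1$ solves $\ddot d=-W'(d)=d^3$, which blows up in finite time with no collision.

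In the setting of \cite{Hynd2020}, $W'$ grows at most linearly, and then Gr\"onwall does work. In this paper's setting ($W$ admissible and normalized so that $W\ge 0$), Gr\"onwall is unavailable for $p=1$, because $W'$ may grow superlinearly. The correct ingredient there is the energy. For $W\in C^1$ with $W'$ odd, the quantity $\tfrac12\sum_i m_i\dot\gamma_i^2+\tfrac12\sum_{i,j}m_im_jW(\gamma_i-\gamma_j)$ is constant along \eqref{ODEs} between collisions. It does not increase at a merge, by convexity of $v\mapsto v^2$. Hence $\tfrac12 m_i\dot\gamma_i^2\le\mathcal H(0)$ on the whole existence interval. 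This bounds the velocities, so the positions grow at most linearly and each stage can be continued.

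Citing \Cref{thm:supportbound} and \Cref{corr:velocitybound} instead is circular as stated. Those results concern the sticky particle solutions that this proposition constructs, and they also need admissibility and zero mean. Their proofs could be rerun as a priori estimates on the current maximal interval, but they rest on exactly this energy bound, which you should invoke directly.
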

An immediate consequence of the above proposition is the {\it averaging property}, which is an assertion about the conservation of momentum:
\begin{corollary}\label{cons-momentum}
Suppose $\varphi:\mathbb{R}\rightarrow\mathbb{R}$ is a Borel function and let $0\leq s<t$. Then 
\begin{equation}\label{eq:cons-momentum}
\sum_{i=1}^nm_i\varphi(\gamma_i(t))\dot\gamma_i(t+)=\sum_{i=1}^nm_i\varphi(\gamma_i(t))\bigg[\dot\gamma_i(s+)-\int_s^t\bigg(\sum m_jW'(\gamma_i(\tau)-\gamma_j(\tau))\bigg)d\tau\bigg].
\end{equation}
\end{corollary}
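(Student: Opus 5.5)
The plan is to prove the averaging identity \eqref{eq:cons-momentum} by splitting the interval $[s,t]$ at collision times and using only Proposition~\ref{gammas}. By Proposition~\ref{gammas}(i), there are only finitely many exceptional times in $(s,t)$. Let $s=\tau_0<\tau_1<\dots<\tau_m<\tau_{m+1}=t$ be $s$, $t$ and the collision times in between. The key observation is that $\varphi(\gamma_i(t))$ depends on $i$ only through the cluster to which particle $i$ belongs at time $t$. By (iii), that cluster is a union of clusters existing at any earlier time $\tau\in[s,t]$. Consequently, for each fixed value $a$ of $\gamma_\cdot(t)$, the set $I_a:=\{i:\gamma_i(t)=a\}$ is a union of collision groups at every time $\tau\le t$.

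It therefore suffices to prove, for each such index set $I=I_a$,
\[
\sum_{i\in I}m_i\dot\gamma_i(t+)=\sum_{i\in I}m_i\Big[\dot\gamma_i(s+)-\int_s^t\sum_j m_jW'(\gamma_i(\tau)-\gamma_j(\tau))\,d\tau\Big].
\]
One then multiplies by $\varphi(a)$ and sums over the finitely many values $a$. To prove the displayed identity, define the cluster momentum $P_I(\tau):=\sum_{i\in I}m_i\dot\gamma_i(\tau+)$ and telescope it across the $\tau_k$. There are two ingredients.

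First, on each open interval $(\tau_k,\tau_{k+1})$ each $\gamma_i$ is $C^2$. The ODE \eqref{ODEs} holds there in the sticky sense: when particles have already merged, their common trajectory satisfies the mass-weighted average of the individual forces, and summing over a cluster reproduces $\sum_{i}m_i\sum_j m_jW'(\gamma_i-\gamma_j)$. Hence
\[
\sum_{i\in I}m_i\big(\dot\gamma_i(\tau_{k+1}-)-\dot\gamma_i(\tau_k+)\big)=-\int_{\tau_k}^{\tau_{k+1}}\sum_{i\in I}m_i\sum_j m_jW'(\gamma_i-\gamma_j)\,d\tau.
\]
Second, at each collision time $\tau_k$, Proposition~\ref{gammas}(iv) gives, for each colliding group $G$, $\sum_{i\in G}m_i\dot\gamma_i(\tau_k+)=\sum_{i\in G}m_i\dot\gamma_i(\tau_k-)$. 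Non-colliding particles have continuous velocity. Since $I$ is a union of such groups, $\sum_{i\in I}m_i\dot\gamma_i(\tau_k+)=\sum_{i\in I}m_i\dot\gamma_i(\tau_k-)$. Summing over $k$ gives the claim. At the endpoint $t$ itself, a collision occurring at time $t$ is absorbed into the $t+$ limit by the same momentum conservation.

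The main obstacle is the first ingredient. Read literally, (i) asserts the individual-particle ODE, which fails for particles inside a merged cluster: their accelerations are equal, so they cannot all equal the distinct individual forces. I would resolve this by interpreting (i), as in \cite{Hynd2019}, as holding after the sum over the cluster. Equivalently, one can prove directly from the construction that a merged cluster obeys Newton's law with the total force. Since $I$ is a union of clusters, only the summed form is needed.
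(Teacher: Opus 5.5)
Your argument is correct and is essentially the standard unpacking of what the paper simply calls an immediate consequence of Proposition~\ref{gammas}: by (iii) each set $\{i:\gamma_i(t)=a\}$ is a union of colliding groups at every earlier time, so you telescope the group momentum over the finitely many exceptional times, with (iv) giving conservation at each of them (including at $t$ itself, which yields the $t+$ limit), and then multiply by $\varphi(a)$ and sum. One correction: your ``main obstacle'' is not real, because the force $-\sum_j m_jW'(\gamma_i(\tau)-\gamma_j(\tau))$ depends on $i$ only through the position $\gamma_i(\tau)$, so all members of a merged cluster feel the same force and (i) holds literally for every particle, which gives your first ingredient directly with no reinterpretation needed.
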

An inequality, which is a consequence of the obvious fact that kinetic energy decreases through collisions, reads off
\begin{equation}\label{eq:discrete-energy}
\frac{1}{2}\sum_{i=1}^nm_i\dot\gamma_i(t+)^2+\frac{1}{2}\sum_{i,j=1}^n m_im_jW(\gamma_i(t)-\gamma_j(t))\leq \frac{1}{2}\sum_{i=1}^nm_i\dot\gamma_i(s+)^2+\frac{1}{2}\sum_{i,j=1}^n m_im_jW(\gamma_i(s)-\gamma_j(s)).
\end{equation}
 Now set $X(t,y_i)=\gamma_i(t)$ for $t\geq0$. Also, define $v$ by
 $v(t,x)=\dot\gamma_i(t+)$ if $x=\gamma_i(t)$ or $v(t,x)=0$ else. By Proposition \ref{gammas}, $v$ is well-defined and it is routine to check that $v$ is Borel as a function of $(t,x)$. Furthermore, we clearly have $\dot X(t,y)=v(t,X(t,y))$ on the support of $\rho_0$ for all but finitely many $t\geq0$. In light of this, equations \eqref{eq:cons-momentum} and \eqref{eq:discrete-energy} can be re-written as
 \begin{equation}\label{eq:cond-expect}
 \int\varphi(y)v(t,y)\rho_t(dy)=\int\varphi(X(t,y))\bigg[v_0(y)-\int_0^t(W'\ast\rho_\tau)(X(\tau,y))d\tau\bigg]\rho_0(dy)
 \end{equation}
 and 
 \begin{equation}\label{eq:nonincresing-energy}
 \frac{1}{2}\int v^2(t,y)\rho_t(dy)+\frac{1}{2}\int\!\!\!\int W(y-z)\rho_t(dy)\rho_t(dz)\leq \frac{1}{2}\int v^2(s,y)\rho_s(dy)+\frac{1}{2}\int\!\!\!\int W(y-z)\rho_s(dy)\rho_s(dz),
 \end{equation}
 respectively. Here $\rho_t:=X_{t\#}\rho_0$. Let $\chi$ denote the Lebesgue measure restricted to the unit interval $(0,1)$. If $N_0$ is the right-continuous optimal map pushing $\chi$ forward to $\rho_0$, we get $\rho_t=X(t,N_0)_\#\chi$, and since $X_t$ and $N_0$ are monotone nondecreasing, we infer that $N_t:=X_t\circ N_0$ is the right-continuous optimal map pushing $\chi$ forward to $\rho_t$. 

 Since our potential $W$ is not assumed to be $c$-convex, the so-called ``quantitative stickiness" property \cite{Hynd2019} that 
 $$\frac{|\gamma_i(t)-\gamma_j(t)|}{\mathrm{sinh}(\sqrt{c}t)}\leq \frac{|\gamma_i(s)-\gamma_j(s)|}{\mathrm{sinh}(\sqrt{c}s)}\mbox{ for all }0<s\leq t$$
 is not available to us. In \cite{Hynd2019} this estimate is used to prove that the passage to the limit of the approximating maps $X^n$ (built for the discrete approximations $\rho^n_0$ to $\rho_0$) provides a limiting map $X$ that satisfies the same estimate and the pair $(\rho,v)$ solves the system. In particular, $\rho_t=X_{t\#}\rho_0$, so the solution at time $t>0$ is the push forward of $\rho_0$ through the map $X_t$. In fact, in \cite{Hynd2019} it is shown that there exist Lipschitz maps $f_{s,t}$ such that $f_{s,t}\circ X_s=X_t$ if $0\leq s\leq t$.  With $c$-convexity out of the picture, we are not able to reach similar conclusions. In particular, we do not know how to obtain a limiting map $X$, which ultimately prevents us from constructing a solution $(\rho,v)$ by setting $\rho_t=X_{t\#}\rho_0$. However, if we forgo the goal of constructing a solution that satisfies a stickiness condition of any kind, we can do that by passing to the limit at the level of the maps $N^n_t:=X^n_t\circ N^n_0$, which push forward $\chi$ to $\rho^n_t$. The tradeoff is that the limiting maps $N_t$ are not necessarily compositions with $N_0$, i.e. we will not obtain a map $X_t$ such that $N_t=X_{t}\circ N_0$. 

 It is therefore advantageous to our approach to rewrite \eqref{eq:cond-expect} as
 \begin{equation}\label{eq:cond-expect-N}
 \int_0^1\varphi(N(t,x))\dot N(t,x)dx=\int_0^1\varphi(N(t,x))\bigg[v_0(N_0(x))-\int_0^t\int_0^1W'(N(s,x)-N(s,z))dzds\bigg]dx.
 \end{equation}

\section{Existence of solutions from compactly supported initial data}
   \begin{theorem} \label{thm:existence}
        Let $W \in C^1(\R)$ be an even potential that satisfies the globally-in-time uniform boundedness condition, i.e. the sticky particle solution (SPS) starting at a discrete measure in $\mathcal{P}(\R)$ supported in $[-R, R]$ with initial velocity $v_0 \in C(\R)$ satisfies
        \begin{enumerate}
            \item $\supp \rho_t \subset [-M, M]$
            \item $\norm{v_t}_{L^\infty(\rho_t)} \leq M$
        \end{enumerate}
        for all $t \geq 0$ and some $M \in \R$. Let $\rho_0 \in \mathcal{P}(\R)$ be supported in $[-R, R]$. Then there exists a distributional solution $(\rho_t, v_t)$ starting at $(\rho_0, v_0)$ that obeys the same bounds as above.
    \end{theorem}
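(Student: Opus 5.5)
We approximate $\rho_0$ by discrete measures, pass to the limit in the monotone rearrangement (Lagrangian) variables $N^n_t$ of the discrete sticky particle solutions, and recover the equation from the conditional expectation formula \eqref{eq:cond-expect-N}.

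\textbf{Discrete approximation.} We choose discrete probability measures $\rho^n_0$ supported in $[-R,R]$ with $W_2(\rho^n_0,\rho_0)\to0$. For instance, $\rho^n_0:=(N_0^n)_\#\chi$, where $N_0^n$ is the average of $N_0$ on the dyadic intervals $[k2^{-n},(k+1)2^{-n})$; then $N_0^n\to N_0$ in $L^2(0,1)$ and a.e. We keep the same $v_0$ for every $n$. Since $v_0$ is continuous and bounded on $[-R,R]$, $v_0\circ N^n_0\to v_0\circ N_0$ a.e., hence in $L^2(0,1)$ by dominated convergence. Let $(\rho^n,v^n)$ be the discrete SPS and $N^n_t=X^n_t\circ N^n_0$. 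By hypothesis, $|N^n_t|\le M$ and $|\dot N^n_t|=|v^n(t,N^n_t)|\le M$ for all but finitely many $t$. Thus $t\mapsto N^n_t\in L^2(0,1)$ is $M$-Lipschitz, with values in the set $\mathcal K_M$ of nondecreasing functions bounded by $M$.

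\textbf{Compactness.} By Helly's selection theorem, $\mathcal K_M$ is compact in $L^2(0,1)$. Arzel\`a--Ascoli then gives a subsequence with $N^n\to N$ in $C([0,T];L^2(0,1))$ for every $T$, after a diagonal argument. The limit $N_t$ is nondecreasing and right-continuous (after modification), $M$-Lipschitz in time, and bounded by $M$. We set $\rho_t:=N_{t\#}\chi$, so $\operatorname{spt}\rho_t\subset[-M,M]$ and $W_2(\rho^n_t,\rho_t)\to0$ uniformly on $[0,T]$. The momenta $\dot N^n$ are bounded by $M$ in $L^\infty((0,T)\times(0,1))$, so a further subsequence converges weakly-$*$. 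The weak-$*$ limit is necessarily $\partial_tN$, since the time primitives converge. By the Markov-chain/measurability argument below, it can be written as $v(t,N_t)$ with $|v|\le M$.

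\textbf{Passing to the limit in \eqref{eq:cond-expect-N}.} The force term $F^n(s,x)=\int_0^1W'(N^n(s,x)-N^n(s,z))\,dz$ converges strongly, because $W'$ is uniformly continuous on $[-2M,2M]$ and $N^n\to N$ a.e. along a subsequence. Hence the bracket $B^n(t,x)=v_0(N^n_0)-\int_0^tF^n\,ds$ converges strongly in $L^2$. We take test functions $\varphi\in C_c(\mathbb R)$, multiplied by $\eta(t)\in C_c(0,T)$. On the left, $\varphi(N^n)\to\varphi(N)$ strongly, while $\dot N^n$ converges weakly-$*$; on the right, everything converges strongly. This gives
$$\int_0^T\!\!\eta\int_0^1\varphi(N)\,\partial_tN\,dx\,dt=\int_0^T\!\!\eta\int_0^1\varphi(N)\Big[v_0(N_0)-\int_0^tF\,ds\Big]dx\,dt.$$
Since this holds for all $\varphi$, $\partial_tN_t$ is the conditional expectation, with respect to $\chi$, of the bracket given $\sigma(N_t)$. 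It is therefore of the form $v_t\circ N_t$ with $v_t$ Borel and $|v_t|\le M$, which defines $v$.

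\textbf{Verifying the equations.} For $\zeta\in C_c^\infty(\mathbb R)$ we have
$$\tfrac{d}{dt}\int\zeta\,d\rho_t=\int_0^1\zeta'(N_t)\,\partial_tN_t\,dx=\int\zeta' v_t\,d\rho_t,$$
which is \eqref{eq:continuity}. For \eqref{eq:momentum}, apply the identity above with $\varphi=\zeta'$; this is not enough, because we need $\int\zeta(N_t)\partial_tN_t\,dx$ as a function of $t$ and its time derivative. Differentiating the right side in $t$ formally produces $\int\zeta'(N_t)\partial_tN_t\,B_t\,dx-\int\zeta(N_t)F_t\,dx$.

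\textbf{Main obstacle.} The difficulty is to identify $\int\zeta'(N)\,\partial_tN\,B$ with $\int\zeta'(N)(\partial_tN)^2=\int\zeta' v^2\,d\rho$. This holds only if $B_t$ is itself $\sigma(N_t)$-measurable up to its conditional expectation, which is where stickiness would normally enter. The alternative I would try first is to pass to the limit directly in the weak momentum equation satisfied by each discrete SPS,
$$\int_0^T\!\!\int_0^1\big[\eta'\zeta(N^n)\dot N^n+\eta\,\zeta'(N^n)(\dot N^n)^2-\eta\,\zeta(N^n)F^n\big]\,dx\,dt=0.$$
The only nonlinear term is $(\dot N^n)^2$, so the key step is strong $L^2$ convergence of $\dot N^n$. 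I would try to prove it by showing $\limsup\|\dot N^n\|_{L^2}\le\|\partial_tN\|_{L^2}$. The tools would be the energy inequality \eqref{eq:nonincresing-energy}, the conditional expectation identity tested with $\varphi=$ the velocity itself, and a one-sided Oleinik-type bound inherited from the sticky dynamics. If this fails, I would settle for a measure-valued limit of $(\dot N^n)^2$ and argue that its defect is supported where $N_t$ is locally constant, namely on the clusters. There the velocities are averaged, so the defect can be absorbed by the conditional expectation structure.
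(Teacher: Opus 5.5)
\textbf{The gap is at the end: the momentum equation is never proved.} Your compactness step and your passage to the limit in the conditional-expectation identity are sound and essentially match the paper's proof. Arzel\`a--Ascoli on the compact set of monotone maps replaces the paper's rational-times diagonal argument. The problem is that the step you call the main obstacle is not one. You then replace it with a plan that is neither carried out nor available here.

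The identification $\int_0^1\zeta'(N_t)\,\partial_tN_t\,B_t\,dx=\int_0^1\zeta'(N_t)(\partial_tN_t)^2\,dx$ needs no stickiness. It follows from the tower property:
\begin{itemize}
\item You have $\int_0^1\varphi(N_t)\partial_tN_t\,dx=\int_0^1\varphi(N_t)B_t\,dx$ for a.e.\ $t$ and all $\varphi$ in a countable dense subset of $C_c(\R)$.
\item This extends by Cauchy--Schwarz to every $\varphi\in L^2(\rho_t)$, since both sides are bounded by $\|\varphi\|_{L^2(\rho_t)}$ times an $L^2(0,1)$ norm.
\item Take $\varphi=\zeta'v_t$, where $\partial_tN_t=v_t\circ N_t$. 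This gives the identification directly.
\end{itemize}
The argument uses only that $\zeta'(N_t)\partial_tN_t$ is $\sigma(N_t)$-measurable, not that $B_t$ is. This is exactly what the paper does: it approximates $v_t$ in $L^2(\rho_t)$ by smooth compactly supported $\psi$ and applies the limiting identity to an antiderivative of $\zeta'\psi$.

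The differentiation you call formal is also rigorous. The map $t\mapsto N_t$ is Lipschitz into $L^2(0,1)$, and $t\mapsto B_t$ is Lipschitz into $L^\infty(0,1)$ with $\partial_tB_t=-F_t$. Hence $t\mapsto\int_0^1\zeta(N_t)B_t\,dx$ is Lipschitz, with the product-rule derivative. It agrees a.e.\ with $\int\zeta v_t\,d\rho_t$, which gives the weak momentum equation and the initial condition.

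\textbf{Your proposed replacement is not available.} Strong $L^2$ convergence of $\dot N^n$ via a one-sided Oleinik bound is exactly the quantitative stickiness that $c$-convexity provides in Hynd's work. The theorem targets $C^1$ potentials that are not even locally semiconvex, such as $|x|^2-|x|^{4/3}$, for which no such bound holds. The measure-valued-defect fallback is not an argument.

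\textbf{What actually needs proof is $\partial_tN_t=v_t\circ N_t$.} You assert it, but it does not follow from the conditional-expectation identity. That identity only gives $\mathbb{E}[\partial_tN_t\,|\,\sigma(N_t)]=\mathbb{E}[B_t\,|\,\sigma(N_t)]$. You need a separate argument, which the paper takes from Tudorascu (2019):
\begin{itemize}
\item Fix a time where $s\mapsto N_s$ is $L^2$-differentiable, and an interval where $N_t$ is constant.
\item On that interval the forward quotients $(N_{t+h}-N_t)/h$, $h>0$, are nondecreasing in $x$, because $N_{t+h}$ is nondecreasing.
\item The backward quotients $(N_t-N_{t-h})/h$ are nonincreasing in $x$.
\item Both converge to $\dot N_t$, so $\dot N_t$ is a.e.\ constant on each interval of constancy of $N_t$. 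That is, $\dot N_t$ is $\sigma(N_t)$-measurable.
\end{itemize}
With this fact in hand, the tower-property step above closes the proof.
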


    \begin{proof}
        Let $\{\rho_0^n\}_n \subseteq \mathcal{P}(\R)$ be a sequence of discrete measures which converges to $\rho_0$ in the $W_2$ metric. Denote by $(\rho_t^n, v_t^n)$ the resulting (SPS). By the uniform (in $t$ and $n$) bound on the support of $\rho_t^n$, we can extract, via a standard diagonal argument, a subsequence $\rho_t^{n_k}$ such that $\rho_t^{n_k} \xrightarrow[k \to \infty]{W_2} \rho_t$ for some $\rho_t \in \mathcal{P}(\R)$ and all $t \in \Q \cap [0, \infty)$. Since 
        \[
            W_2(\rho_t^n, \rho_s^n) \leq \int_s^t \norm{v_\tau^n}_{L^2(\rho_\tau^n)} d\tau \qquad \text{for all } n \text{ and } 0 \leq s < t < \infty\text{,}
        \]
        we use the uniform bound $\norm{v_t^n}_{L^\infty(\rho_t^n)} \leq M$ to deduce that
        \[
            W_2(\rho_t^n, \rho_s^n) \leq M|t - s| \qquad \text{ for all } n \text{ and } 0 \leq s < t < \infty\text{.}
        \]
        Let $t \in (0, \infty)$. Consider any sequence of rationals $\{t_j\}_{j \geq 1}$ such that $t_j \xrightarrow[j \to \infty]{} t$. Since $W_2(\rho_{t_i}^{n_k}, \rho_{t_j}^{n_k}) \leq M|t_i - t_j|$ for all $i, j \in \N$, we see that $\{\rho_{t_i}^{n_k}\}_{i \geq 1}$ is a Cauchy sequence in $(\mathcal{P}_2(\R), W_2)$. Since $(\mathcal{P}_2(\R), W_2)$ is a complete metric space, there exists $\rho_t^{n_k} \in \mathcal{P}(\R)$ such that $\lim_{j \to \infty} W_2(\rho_{t_j}^{n_k}, \rho_{t}^{n_k}) = 0$. Take a different sequence $\{\overline{t}_j\}_{j \geq 1}$ with $\overline{t}_j \xrightarrow[j \to \infty]{} t$. By constructing a new sequence that alternates between $t_i$ and $\overline{t}_i$, we observe that the limit $\rho_{t}^{n_k}$ is independent of the choice of the sequence $\{t_j\}_{j \geq 1}$.

        By the triangle inequality,
        \[
            W_2(\rho_{t}^{n_k}, \rho_{t}^{n_j}) \leq W_2(\rho_{t}^{n_k}, \rho_{t_i}^{n_k}) + W_2(\rho_{t_i}^{n_k}, \rho_{t_i}^{n_j}) + W_2(\rho_{t_i}^{n_j}, \rho_{t}^{n_j}) \leq 2M|t_i - t| + W_2(\rho_{t_i}^{n_k}, \rho_{t_i}^{n_j})\text{.}
        \]
        Let $\varepsilon > 0$ and choose $t_{i_0} \in \Q \cap [0, \infty)$ such that $|t_{i_0} - t| < \varepsilon/(4M)$. For sufficiently large $n_j$ and $n_k$, we have
        $W_2(\rho_{t_{i_0}}^{n_k}, \rho_{t_{i_0}}^{n_j}) \leq \varepsilon/2$ and since $2M|t_{i_0} - t| < \varepsilon/2$, 
        it follows that $W_2(\rho_{t}^{n_k}, \rho_{t}^{n_j}) < \varepsilon$. Thus, $\{\rho_t^{n_k}\}_{k \geq 1}$ is a Cauchy sequence. Again, since $(\mathcal{P}_2(\R), W_2)$ is complete, we have $\rho_t \in \mathcal{P}(\R)$ such that $\lim_{k \to \infty} W_2(\rho_{t}^{n_k}, \rho_t) = 0$. Therefore, we have a sequence $\rho_{t}^{n_k}$ such that
        \[
            \lim_{k \to \infty} W_2(\rho_{t}^{n_k}, \rho_t) = 0 \qquad \text{for all } t \in [0, \infty)\text{.}
        \]

        Let $N_t^n$ and $N_t$ be the optimal maps such that $N_{t\#}^n\chi = \rho_t^n$ and $N_{t\#}\chi = \rho_t$ for all $n \geq 1$ and $t \geq 0$, where $\chi = \mathcal{L}|_{[0, 1]}$. But $W_2(\rho_{t}^{n_k}, \rho_t) = \norm{N_{t}^{n_k} - N_t}_{L^2(0, 1)}$, so
        \begin{align}
            \lim_{k \to \infty} \norm{N_{t}^{n_k} - N_t} = 0\text{.} \label{eq:lim}
        \end{align}
        Since
        \[
            \mathrm{spt} (\rho_{t}^{n_k}), \mathrm{spt} (\rho_t) \subseteq [-M, M]\text{,}
        \]
        we have
        \begin{equation}\label{full-cv}
            \norm{N_{t}^{n_k}}_{L^\infty(0, 1)} \leq M\text{,} \qquad \norm{N_t}_{L^\infty(0, 1)} \leq M\text{.}
        \end{equation}
        Hence, by the dominated convergence theorem, \eqref{eq:lim} implies
        \[
            \int_0^T \norm{N_{t}^{n_k} - N_t}^2dt \xrightarrow[k \to \infty]{} 0 \qquad \text{for all } T > 0\text{.}
        \]

        We next consider
        \[
            \int_0^T\int_0^1\int_0^1 |N_{t}^{n_k}(x) - N_t(x) - N_{t}^{n_k}(z) + N_t(z)|^2dzdxdt \leq 4\int_0^T \norm{N_{t}^{n_k} - N_t}^2dt \xrightarrow[k \to \infty]{} 0\text{,}
        \]
        consequently deducing that there exists $A \subseteq [0, T] \times (0, 1) \times (0, 1)$ such that $\mathcal{L}^3(A) = T$ (full measure) and a subsequence of $\{n_k\}_{k \geq 1}$, not relabeled, such that 
        \[
            N_{t}^{n_k}(x) - N_{t}^{n_k}(z) \xrightarrow[k \to \infty]{} N_t(x) - N_t(z) \qquad \text{for all } (t, x, z) \in A\text{.}
        \]
        This implies
        \begin{align}
            W'(N_{t}^{n_k}(x) - N_{t}^{n_k}(z)) \xrightarrow[k \to \infty]{} W'(N_t(x) - N_t(z)) \qquad \text{for all } (t, x, z) \in A\text{.} \label{eq:wprimeina}
        \end{align}

        Now, let $\varphi \in C^2(\R)$ and denote
        \begin{align*}
            J_k &= \int_0^T\int_0^1\int_0^1 \varphi'(N_{T}^{n_k}(x))W'(N_{t}^{n_k}(x) - N_{t}^{n_k}(z))dzdxdt,
            \\ 
            J &= \int_0^T\int_0^1\int_0^1 \varphi'(N_T(x))W'(N_t(x) - N_t(z))dzdxdt\text{.}
        \end{align*}

     By the oddness of $W'$,
        \begin{align*}
            J &= \int_0^T\int_0^1\int_0^1 \varphi'(N_T(z))W'(N_t(z) - N_t(x))dzdxdt \\
            &= -\int_0^T\int_0^1\int_0^1 \varphi'(N_T(z))W'(N_t(x)-N_t(z))dzdxdt\text{,}
        \end{align*}
        so
        \[
            2J = \int_0^T\int_0^1\int_0^1 \left [ \varphi'(N_T(x)) - \varphi'(N_T(z))\right ] W'(N_t(x) - N_t(z))dzdxdt\text{.}
        \]
        Likewise,
        \[
            2J_k = \int_0^T\int_0^1\int_0^1 \left [ \varphi'(N_T^{n_k}(x)) - \varphi'(N_T^{n_k}(z))\right]W'(N_t^{n_k}(x) - N_t^{n_k}(z))dzdxdt\text{.}
        \]
        Combining these, we get that $2(J_k-J)$ is equal to
        \begin{align*} \label{eq:firstint}
             & \int_0^T\int_0^1\int_0^1 \left[\varphi'(N_{T}^{n_k}(x)) - \varphi'(N_{T}^{n_k}(z)) - \varphi'(N_T(x)) + \varphi'(N_T(z))\right]W'(N_{t}^{n_k}(x) - N_{t}^{n_k}(z))dzdxdt \\
            &+ \int_0^T\int_0^1\int_0^1 \left[\varphi'(N_T(x)) - \varphi'(N_T(z))\right]\left[W'(N_{t}^{n_k}(x) - N_{t}^{n_k}(z)) - W'(N_t(x) - N_t(z))\right]dzdxdt\text{.} 
        \end{align*}
        Since
        \[
            |\varphi'(N_{T}^{n_k}(x)) - \varphi'(N_T(x))| \leq \norm{\varphi''}_{L^\infty(-M, M)}|N_{T}^{n_k}(x) - N_T(x)|\text{,}
        \]
        the first integral in the expression above for $2(J_k-J)$ is bounded above in absolute value by
        \begin{align*}
            &2T\norm{W'}_{L^\infty(-2M, 2M)}\norm{\varphi''}_{L^\infty(-M, M)}\int_0^1|N_{T}^{n_k}(x) - N_T(x)|dx \\
            &\leq 2T\norm{W'}_{L^\infty(-2M, 2M)}\norm{\varphi''}_{L^\infty(-M, M)}\norm{N_{T}^{n_k} - N_T}_{L^2(0, 1)} \\
            &= 2T\norm{W'}_{L^\infty(-2M, 2M)}\norm{\varphi''}_{L^\infty(-M, M)}W_2(\rho_{T}^{n_k}, \rho_T)\text{.}
        \end{align*}
        For the second integral in that expression, we use \eqref{eq:wprimeina} and the dominated convergence theorem to conclude that it converges to $0$ as $k \to \infty$. Therefore, 
        \begin{align}
            \lim_{k \to \infty} J_k = J\text{.} \label{eq:limj}
        \end{align}
        
        Furthermore, the uniform (w.r.t. $t$ and $n$) bound on $\|v^n_t\|_{L^\infty(\rho^n_t)}$ implies a uniform (w.r.t. $t$ and $n$) bound on $\|v^n_t\|_{L^2(\rho^n_t)}$, which in turn implies that $$\sup_n \|\dot N^n_t\|_{L^2(0,1)} < \infty \qquad \forall T > 0.$$
        Thus, there exists some $U \in L^2((0, T) \times (0, 1))$ such that $$\dot{N}^{n_k} \xrightharpoonup[k \to \infty]{} U \qquad \text{in } L^2((0, T) \times (0, 1)),$$ after passing to a further subsequence (which we will not notate). Since $N^{n_k} \rightarrow N$ strongly in that same space, we can identify $\dot{N} = U$, and thus have that $N \in H^1(0, T; L^2(0, 1))$. By a diagonal argument (which we will not notate) we can ensure that this holds on any finite interval, i.e. on $[0, T]$ for all $T$.
        
        Now, note that by \eqref{eq:cond-expect-N} we have
        \begin{equation} \label{momentum-N-k}
            \frac{d}{dt}\int_0^1 \varphi(N_{t}^{n_k}(x))dx = \int_0^1 \varphi'(N_{t}^{n_k}(x))\left[v_0(N_{0}^{n_k}(x)) - \int_0^t\int_0^1 W'(N_{s}^{n_k}(x) - N_{s}^{n_k}(z))dzds\right]dx.
        \end{equation}
        Set
        \[
        F_k(t):=\int_0^1\varphi(N_t^{n_k}(x))\,dx,
        \qquad
        F(t):=\int_0^1\varphi(N_t(x))\,dx,
        \]
        and denote the right hand sides of \eqref{momentum-N-k} and of \eqref{momentum-N} by $G_k(t)$ and $G(t)$, respectively. Then $F_k'=G_k$ a.e. on $[0,T]$. For every $t\in[0,T]$, we have
        \[
        F_k(t) \longrightarrow F(t) \qquad G_k(t) \longrightarrow G(t),
        \]
        where the latter follows from \eqref{eq:limj}, applied with $t$ in place of $T$, and the convergence of $v_0(N_0^{n_k})$. Moreover,
        \[
        \sup_k\|G_k\|_{L^\infty(0,T)}
        \le
        \|\varphi'\|_{L^\infty(-M,M)}
        \left(\|v_0\|_{L^\infty(-R,R)}
        +T\|W'\|_{L^\infty(-2M,2M)}\right)<\infty.
        \]
        The fundamental theorem of calculus and dominated convergence therefore yield $F'=G$ a.e. on $[0,T]$, which is precisely \eqref{momentum-N}:
        \begin{equation}\label{momentum-N}
            \frac{d}{dt}\int_0^1 \varphi(N_t(x))dx = \int_0^1 \varphi'(N_t(x))\left[v_0(N_0(x)) - \int_0^t\int_0^1 W'(N_s(x) - N_s(z))dzds\right]dx
        \end{equation}
        for $\mathcal{L}-\text{a.e. } t \in [0, T]$.
        This identity, as in \cite{Hynd2019}, encodes the information that is sufficient to conclude that $(\rho_t, v_t)$, where $v_t$ is the unique function in $L^2(\rho_t)$ such that $\dot{N_t} = v_t \circ N_t$ as in \cite{Tudorascu2019}, solves our system with initial data $(\rho_0, v_0)$. Indeed, the last displayed identity shows 
        \[
            [0,T]\ni t\rightarrow \int_0^1\varphi(N_t(x))\dot N_t(x)dx \text{ lies in } W^{1,1}(0,T)
        \]
        and 
        \begin{eqnarray}\label{momentum-eqn}
            \frac{d}{dt}\int_0^1 \varphi(N_t(x))\dot N_t(x)dx &=& \int_0^1 \varphi'(N_t(x))\dot N_t(x)\left[v_0(N_0(x)) - \int_0^t\int_0^1 W'(N_s(x) - N_s(z))dzds\right]dx\nonumber\\
            &-&\int_0^1 \varphi(N_t(x))\int_0^1 W'(N_t(x) - N_t(z))dzdx.
        \end{eqnarray}
        
        But 
        \begin{eqnarray*}
            \int_0^1 \varphi'(N_t(x))\dot N_t(x)\left[v_0(N_0(x)) - \int_0^t\int_0^1 W'(N_s(x) - N_s(z))dzds\right]dx=\int_0^1 \varphi'(N_t(x))|\dot N_t(x)|^2dx\text{,}
        \end{eqnarray*}
        which is obtained by first approximating $v_t$ in $L^2(\rho_t)$ strongly by smooth and compactly supported functions $\psi$ and noting that  
        \begin{eqnarray*}
            \int_0^1 \varphi'(N_t(x))\psi(N_t(x))\left[v_0(N_0(x)) - \int_0^t\int_0^1 W'(N_s(x) - N_s(z))dzds\right]dx\\
            =\int_0^1 \varphi'(N_t(x))\psi(N_t(x))\dot N_t(x)dx.
        \end{eqnarray*}
    This last equation exhibits \eqref{momentum-N} applied to an antiderivative of $\varphi'\psi$ instead of $\varphi$.
    Finally, if we use the identity displayed after \eqref{momentum-eqn} and rewrite \eqref{momentum-eqn} in terms of $\rho_t$ and $v_t$ (by taking into account $\dot N_t=v_t\circ N_t$ \cite{Tudorascu2019} and $\rho_t=N_{t\#}\chi$), we get the momentum equation.
    \end{proof}

    \begin{remark}
        By \Cref{thm:supportbound} and \Cref{corr:velocitybound}, any admissible potential (in the sense of \Cref{defn:admissiblepotential}) will satisfy the global-in-time uniform boundedness condition required in \Cref{thm:existence}.

        We also observe that a $C^1$ admissible potential need not be $c$-convex: for example, consider the double-welled potential $W(x) = |x|^2 - |x|^{4/3}$. We note that such an example is not even locally $c$-convex.
    \end{remark}

\section{Asymptotic examples for general potentials}
    In \cite{Tudorascu2024REU} the authors showed that for the purely attractive potential $W(x)=|x|$, the SPS originating in a compact set collapses into the Dirac delta originating at the initial distribution's center of mass, moving with constant velocity equal to the weighted average of the initial velocities. Additionally, it was proven that the time to collapse can be explicitly written in terms of $\rho_0$ and $v_0$. Similarly, our admissible potentials include long range attractiveness and thus we briefly explore what can be expected in terms of the asymptotic behavior of solutions.    

    For a smooth even potential and initial configuration of two particles of total mass one, the separation distance between the two particles $d(t)$ satisfies the differential equation 
    \begin{equation}
    \ddot d(t)=-W'(d(t)).\label{eq:smooth-separation}
    \end{equation}
    Thus, any nonzero critical point $d_*$ of $W$ yields a noncollapsed stationary two-particle solution at separation $d_*$. 
    \begin{example}(Unique global minimum is insufficient.)
    Let
    \begin{equation}
    W_1(x)=x^6-3x^4+3x^2=1+(x^2-1)^3.\label{eq:poly1}
    \end{equation}
    Then \(W_1\) is even, smooth, coercive, and has a unique global minimum. However,
    $W_1'(x)=6x(x^2-1)^2$, 
    so \(W_1'(1)=0\). Thus, two particles at separation one with zero relative velocity remain at that separation forever. Therefore an even coercive potential with unique global minimum at zero need not force collapse to a single Dirac mass.
    \end{example}

    The nonzero critical point here is degenerate and not a local minimum. The next example shows that even adding attraction near zero and excluding nearby nonzero critical points is insufficient.
    \begin{example}(Coercivity, near-zero attraction, and a nonzero local well)
    Let
    \begin{equation}
    W_2(x)=\frac{x^6}{6}-\frac{13x^4}{16}+\frac{9x^2}{8}.\label{eq:poly2}
    \end{equation}
    Then
    \begin{equation}
    W_2'(x)=x(x^2-1)\left(x^2-\frac94\right).\label{eq:poly2-der}
    \end{equation}
    The potential is even, smooth, and coercive and zero is the unique global minimizer.

    For \(x>0\), the force derivative has signs
    \begin{equation}
    W_2'(x)
    \begin{cases}
    >0,&0<x<1,\\
    <0,&1<x<3/2,\\
    >0,&x>3/2.
    \end{cases}\label{eq:poly2-sign}
    \end{equation}
    Thus the interaction is attractive at small positive separations, has no nonzero critical point in $(0,1)$, and has a nonzero local minimum at $d=3/2$. Two particles placed at separation $3/2$ with zero relative velocity remain there and do not collapse.
    \end{example}

    This disproves the sufficiency of the combined assumptions: evenness, coercivity, unique global minimum at zero, and attraction throughout a neighborhood of zero.
    We also have noncollapsing solutions that aren't stationary:
    \begin{example} (A non-stationary, noncollapsing example)
    \begin{equation}
    W_3(x)=\frac{x^6}{6}-\frac{3x^4}{4}+x^2,
    \quad
    W_3'(x)=x(x^2-1)(x^2-2).\label{eq:poly3}
    \end{equation}
    This potential is even, coercive, and has a unique global minimum at zero. By choosing the initial relative speed so that the two-particle energy equals the value at the nonzero critical separation $d=1$, one obtains an orbit approaching $d=1$ with vanishing relative velocity rather than collapsing. 
    \end{example}

    \pgfplotsset{
        potential plot/.style={
            width=\textwidth,
            height=0.75\textwidth,
            axis lines=middle,
            xlabel={$x$},
            ylabel={$y$},
            ymin=-2, ymax=10,
            samples=200,
            grid=both,
            enlarge x limits=0.05,
            enlarge y limits=0.05,
            legend style={
                at={(0.5,0.97)}, 
                anchor=north,
                font=\footnotesize,
                inner sep=2pt,
                row sep=0pt,
                column sep=2pt,
            },
            legend image post style={xscale=0.7},
            tick label style={font=\footnotesize},
            label style={font=\small},
            xlabel style={anchor=west},
            ylabel style={anchor=south},
        }
    }

    \begin{figure}[H]
        \centering

        \begin{minipage}[t]{0.32\textwidth}
            \centering
            \begin{tikzpicture}
                \begin{axis}[
                    potential plot,
                    xmin=-2, xmax=2,
                    domain=-2:2,
                    xtick={-2,-1,1,2},
                    ytick={5,10}
                ]
                    \addplot[red, thick] {x^6 - 3*x^4 + 3*x^2};
                    \addplot[blue, thick] {6*x*(x^2-1)^2};
                \end{axis}
            \end{tikzpicture}

            \par\smallskip
            \textbf{$W_1$}
        \end{minipage}
        \hfill
        \begin{minipage}[t]{0.32\textwidth}
            \centering
            \begin{tikzpicture}
                \begin{axis}[
                    potential plot,
                    xmin=-3, xmax=3,
                    domain=-2.5:2.5,
                    xtick={-2,2},
                    ytick={5,10}
                ]
                    \addplot[red, thick]
                        {(1/6)*x^6 - (13/16)*x^4 + (9/8)*x^2};
                    \addplot[blue, thick]
                        {x*(x^2-1)*(x^2-9/4)};
                \end{axis}
            \end{tikzpicture}

            \par\smallskip
            \textbf{$W_2$}
        \end{minipage}
        \hfill
        \begin{minipage}[t]{0.32\textwidth}
            \centering
            \begin{tikzpicture}
                \begin{axis}[
                    potential plot,
                    xmin=-3, xmax=3,
                    domain=-2.5:2.5,
                    xtick={-2,2},
                    ytick={5,10}
                ]
                    \addplot[red, thick]
                        {(1/6)*x^6 - (3/4)*x^4 + x^2};
                    \addplot[blue, thick]
                        {x*(x^2-1)*(x^2-2)};
                \end{axis}
            \end{tikzpicture}

            \par\smallskip
            \textbf{$W_3$}
        \end{minipage}

        \caption{The potentials $W_i$ (red) and their derivatives $W_i'$ (blue).}
        \label{fig:poly-comparison}
    \end{figure}

\appendix
\section{Application of confinement to $c$-convex potentials} \label{app:semiconvexasymptote}
    We conclude by discussing an application of \Cref{thm:supportbound} and \Cref{corr:velocitybound} in the setting of $c$-convex potentials. 
    We will show for the solutions of \cite{Hynd2020} started in a compact interval with absolutely continuous velocity, that the results in \Cref{sec:uniformconfinement} allow us to take limits of time-shifted solutions as the shift tends to infinity, and that such limits are constant-Hamiltonian \emph{eternal solutions}, i.e. solutions defined for all $t \in \mathbb{R}$. 
    
    As an application, we show that the asymptotic statement \Cref{corr:convergencetodelta} is immediate. Thus, the confinement estimates in \Cref{sec:uniformconfinement} provide additional information even in the $c$-convex setting.
    
    To avoid contending with questions of uniqueness, we will consider the following class of solutions:
    \begin{definition}[Trajectory map solutions \cite{Hynd2020}] \label{defn:trajectorymapsoln}
        Fix an admissible potential $W$ that is also $C^1$ and $c$-convex. For $\rho_0 \in \mathcal{P}(\mathbb{R})$ compactly supported and $v_0$ absolutely continuous, by \cite[Theorem 1.1, Corollary 4.4]{Hynd2020} there exists a SPS $(\rho_t, v_t)$ corresponding to this initial data, with an associated trajectory map $X(t, x)$. We say that such a triple $(\rho_t, v_t, X(t, \cdot))$ is a trajectory map solution, and without loss of generality require that $\int x d \rho_0 = \int v_0 d \rho_0 = 0$.
    \end{definition}
    \begin{remark}
        Unlike \cite{Hynd2020}, for a trajectory map solution we do not require that the potential $W$ has derivative growing at most linearly. By our assumptions on initial data, \Cref{thm:supportbound} and \Cref{corr:velocitybound} apply to the discrete particle approximations used in the proof of \cite[Theorem 1.1]{Hynd2020}, which are confined to $[-C, C]$ for some $C > 0$. So we can use an even cutoff function to make $x \mapsto \sgn(x) W'(x)$ constant on $\mathbb{R} \setminus [-4C, 4C]$ without modifying $W$ on $[-2C, 2C]$, and just work with this new $W$ instead.
    \end{remark}

    The following objects will be important in the sequel. For $Y \in L^\infty(0, 1)$ put $$\mathcal{F}[Y](a) := -\int_0^1 W'(Y(a) - Y(b)) db \qquad \mathcal{P}[Y] := \frac{1}{2} \int_0^1 \int_0^1 W(Y(a) - Y(b)) \, da \, db.$$
    When the space is not specified $\norm{\cdot}$ and $\langle \cdot, \cdot \rangle$ will denote the norm and inner product on $L^2(0, 1)$, respectively.

    The following definition will be important, as they are the limiting objects addressed in this section:
    \begin{definition}[Eternal solutions] \label{defn:perfecteternal}
        We say that $Y \in C^2(\mathbb{R}; L^2(0, 1)) \cap L^\infty(\mathbb{R} \times (0, 1))$ is an \emph{eternal solution} if:
        \begin{enumerate}
            \item For every $t \in \mathbb{R}$, $Y_t$ is nondecreasing
            \item For all $t$, we have that $$\ddot{Y}_t = \mathcal{F}[Y_t] := -\int_0^1 W'(Y_t(a) - Y_t(b)) db.$$
        \end{enumerate}
        Note that by \Cref{lem:forcepotential} below, $\frac{1}{2} \norm{\dot{Y}_t}^2 + \mathcal{P}[Y_t]$ is constant; in other words the Hamiltonian of an eternal solution must be constant.
    \end{definition}    
    We first prove the following technical lemma:
    \begin{lemma} \label{lem:forcepotential}
        Let $R > 0$ and $L_R := \norm{W'}_{L^\infty(-2R, 2R)}$. The following assertions hold:
        \begin{enumerate}
            \item Suppose $Y, Z \in L^2(0, 1)$ satisfies $\norm{Y}_{L^\infty}, \norm{Z}_{L^\infty} \leq R$. 
            Then $|\mathcal{F}[Y]| \leq L_R$ and $$|\mathcal{P}[Y] - \mathcal{P}[Z]| \leq L_R \norm{Y - Z}_{L^1}.$$
            \item Let $Y^n, Y \in L^2(0, 1)$. If $Y^n \rightarrow Y$ in $L^2(0, 1)$ with $\norm{Y^n}_{L^\infty} \leq R$, then $\mathcal{F}[Y^n] \rightarrow \mathcal{F}[Y]$ in $L^2(0, 1)$.
            \item Let $J \subset \mathbb{R}$ be an interval and let $J \ni t \mapsto Y_t \in L^2(0, 1)$ be a map such that $\norm{Y_t}_{L^\infty} \leq R$ for all $t$. If this map is differentiable at $t \in J$ then $\tau \mapsto \mathcal{P}[Y_\tau]$ is also differentiable at $t$, and moreover we can recognize the derivative as $$\frac{d}{d\tau}\Big|_{\tau = t} \mathcal{P}[Y_\tau] = -\langle \mathcal{F}[Y_t], \dot{Y}_t \rangle.$$
            Here, differentiability is understood in the sense of maps into $L^2(0, 1)$.
        \end{enumerate}
    \end{lemma}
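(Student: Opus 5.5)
The plan is to prove the three assertions in order, using only that $W$ is $C^1$ (so $W'$ is continuous) and even, with $W'$ odd. Everything reduces to pointwise estimates on the unit square $(0,1)^2$, followed by integration.

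For (1), suppose $\norm{Y}_{L^\infty},\norm{Z}_{L^\infty}\le R$. Then $|Y(a)-Y(b)|\le 2R$ for a.e. $(a,b)$, so $|W'(Y(a)-Y(b))|\le L_R$. Integrating in $b$ gives $|\mathcal{F}[Y](a)|\le L_R$. For the potential, I will apply the mean value theorem on $[-2R,2R]$:
\[
|W(Y(a)-Y(b))-W(Z(a)-Z(b))|\le L_R\bigl(|Y(a)-Z(a)|+|Y(b)-Z(b)|\bigr).
\]
Integrating over $(a,b)$ and multiplying by $\tfrac12$ yields $|\mathcal{P}[Y]-\mathcal{P}[Z]|\le L_R\norm{Y-Z}_{L^1}$.

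For (2), first note that $\norm{Y}_{L^\infty}\le R$ as well, since an a.e. subsequence of $Y^n$ converges to $Y$. I will argue by subsequences. Given any subsequence, extract a further one with $Y^n\to Y$ a.e. on $(0,1)$. Then $Y^n(a)-Y^n(b)\to Y(a)-Y(b)$ for a.e. $(a,b)$, and continuity of $W'$ gives pointwise convergence of the integrands. These are dominated by $L_R$, so dominated convergence gives $\mathcal{F}[Y^n](a)\to\mathcal{F}[Y](a)$ for a.e. $a$. A second application of dominated convergence, using $|\mathcal{F}|\le L_R$, gives convergence in $L^2(0,1)$. Since every subsequence has a further subsequence converging to the same limit, the whole sequence converges.

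For (3), write $h\mapsto Y_{t+h}=Y_t+h\dot Y_t+o(h)$ in $L^2$. The key is the first-order expansion of $W$. Set $D_h(a,b)=Y_{t+h}(a)-Y_{t+h}(b)$ and $D_0=Y_t(a)-Y_t(b)$. Then
\[
W(D_h)-W(D_0)=\int_0^1 W'\bigl(D_0+\theta(D_h-D_0)\bigr)\,d\theta\;(D_h-D_0).
\]
Divide by $h$. The factor $(D_h-D_0)/h$ converges in $L^2((0,1)^2)$ to $\dot Y_t(a)-\dot Y_t(b)$. The $W'$-average factor is bounded by $L_R$. It converges to $W'(D_0)$ in $L^2((0,1)^2)$, by the subsequence and dominated convergence argument of (2), since $D_h\to D_0$ in $L^2$.

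This step is the main delicate point: it is the product of a strongly $L^2$-convergent bounded family and a strongly $L^2$-convergent family. Strong convergence of both factors in $L^2$ gives convergence of the integral of their product. Hence
\[
\frac{d}{d\tau}\Big|_{\tau=t}\mathcal{P}[Y_\tau]=\frac12\iint W'(Y_t(a)-Y_t(b))\bigl(\dot Y_t(a)-\dot Y_t(b)\bigr)\,da\,db.
\]
Finally, oddness of $W'$ lets me swap $a\leftrightarrow b$ in the second term. That term then equals the first, and the right-hand side becomes $\int_0^1\dot Y_t(a)\int_0^1W'(Y_t(a)-Y_t(b))\,db\,da=-\langle\mathcal{F}[Y_t],\dot Y_t\rangle$.
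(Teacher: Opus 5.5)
Your proposal is correct and follows the paper's proof essentially step by step: the same pointwise Lipschitz bound for (1), the same a.e.-subsequence plus dominated convergence argument for (2), and the same first-order expansion of $W$ with integral remainder, followed by the oddness of $W'$, for (3). In (2), your subsequence-of-subsequence step and the observation that $\norm{Y}_{L^\infty}\le R$ fill in details the paper leaves implicit. The only real difference is in (3): the paper bounds the remainder uniformly through the modulus of continuity, $|h|\omega(|h|)\le\omega(\delta)|h|+2L_R\delta^{-1}|h|^2$, to get an $o(\norm{H}_{L^2(0,1)})$ expansion and then applies the chain rule, whereas you pass to the limit directly in the difference quotient by dominated convergence; both arguments are valid.
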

    \begin{proof}
        \thmendspace
        Item (1) is immediate by the following bound: $$|W(Y(a) - Y(b)) - W(Z(a) - Z(b))| \leq L_R \left( |Y(a) - Z(a)| + |Y(b) - Z(b)| \right).$$
        From item (1), we can obtain (2) by dominated convergence, using the fact that $W$ is $C^1$ and passing to a subsequence where $Y^n \rightarrow Y$ pointwise almost everywhere.

        For item (3), let $\omega(r)$ be the modulus of continuity of $W'$ on $[-2R, 2R]$. Note that for any $x, h$ such that both $x$ and $x + h$ are in $[-2R, 2R]$ we have $$W(x + h) - W(x) = W'(x) h + \int_0^1 (W'(x + \ell h) - W'(x)) h \cdot d \ell,$$
        where the last integral is bounded by $|h| \omega(|h|)$. For any $\tau \in J$ set $H = Y_\tau - Y_t$, and because for any $\delta > 0$ we can write $|h| \omega(|h|) \leq \omega(\delta) |h| + 2 L_R \delta^{-1} |h|^2$, we get $$\mathcal{P}[Y_\tau] - \mathcal{P}[Y_t] = \frac{1}{2} \iint W'(Y_t(a) - Y_t(b)) (H(a) - H(b)) \,da\, d b + o(\norm{H}_{L^2(0, 1)}).$$ 
        By oddness of $W'$, we can simplify the double integral, which gives $$\mathcal{P}[Y_\tau] - \mathcal{P}[Y_t] = -\langle \mathcal{F}[Y_t], H \rangle + o(\norm{H}_{L^2(0, 1)}).$$
        from which item (3) is immediate by chain rule.
    \end{proof}
    We can now state the main result of this section:
    \begin{theorem} \label{thm:shiftedlimits}
        Consider a potential $W$ and a trajectory map solution $(\rho_t, v_t, X(t, \cdot))$ consistent with \Cref{defn:trajectorymapsoln}. Set $N_0$ to be the optimal map such that $(N_0)_\# \chi = \rho_0$, and for any $t \geq 0$ define $N_t := X(t, N_0(\cdot))$.

        Then there is a set $G \subset (0, \infty)$ of full measure such that $\mathcal{H}$ (see \eqref{eq:hamiltonian}) is well-defined and nonincreasing on $G$. 
        Moreover, let $\mathcal{H}_\infty := \lim_{t \rightarrow \infty, t \in G} \mathcal{H}(t)$, and for every sequence $\{t_n\} \subset G$ with $t_n \rightarrow \infty$ there exists up to a subsequence (which we will suppress) an eternal solution $Y$ such that $$N_{t_n + \bullet} \rightarrow Y \quad \text{in } C([-T, T]; L^2(0, 1)) \quad \forall T > 0 \quad \text{and} \quad \dot{N}_{t_n} \rightarrow \dot{Y}_0 \quad \text{strongly in } L^2(0, 1),$$
        and $\frac{1}{2} \norm{\dot{Y}_t}^2 + \mathcal{P}[Y_t] = \mathcal{H}_\infty$ for every $t \in \mathbb{R}$.
    \end{theorem}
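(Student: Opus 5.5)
We plan to derive everything from the uniform confinement in the discrete setting, then from Hynd's structure of trajectory map solutions in the $c$-convex case. First, since the discrete approximations used in \cite[Theorem 1.1]{Hynd2020} obey \Cref{thm:supportbound} and \Cref{corr:velocitybound} with constants depending only on the initial support and velocity bound, passing to the limit gives $\norm{N_t}_{L^\infty} \leq C$ for all $t \geq 0$. It also gives $\norm{\dot N_t}_{L^\infty} \leq C'$ for a.e.\ $t$, using $\dot N_t = v_t \circ N_t$. The energy inequality \eqref{eq:nonincresing-energy} is stable under the limit (lower semicontinuity of kinetic energy, continuity of $\mathcal{P}$ by \Cref{lem:forcepotential}(1)). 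Together with Hynd's representation, in which $\dot N_t$ exists as a right derivative for every $t$ off a null set, this lets us take $G$ to be the set of $t$ where $\dot N_t$ is defined and \eqref{eq:nonincresing-energy} holds between any two points of $G$. Then $\mathcal{H}$ is nonincreasing on $G$ and bounded below, since $W\geq 0$, so $\mathcal{H}_\infty$ exists.

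Next, set $Y^n_t := N_{t_n+t}$ for $t \geq -t_n$. This family is uniformly bounded in $L^\infty$ and uniformly Lipschitz in $L^2$ (constant $C'$). To get compactness in $C([-T,T];L^2)$ we use that each $Y^n_t$ is nondecreasing with values in $[-C,C]$. Monotone functions bounded by $C$ form a compact subset of $L^2(0,1)$ by Helly's theorem, so Arzel\`a--Ascoli with a diagonal argument in $T$ yields a subsequence $Y^n \to Y$ in $C([-T,T];L^2)$ for every $T$. Each $Y_t$ is then nondecreasing and $Y \in L^\infty$.

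To identify the equation, we use the momentum/conditional expectation identity from \cite{Hynd2020}. In integrated form, for $s<t$ in $G$,
\[
\dot N_t = \mathbb{E}\Big[\dot N_s + \int_s^t \mathcal{F}[N_\tau]\,d\tau \,\Big|\, N_t\Big],
\]
where the conditional expectation is with respect to the $\sigma$-algebra generated by $N_t$, i.e.\ a projection onto functions constant on intervals where $N_t$ is constant. Weak-$*$ limits of $\dot Y^n$ give some $U$ with $Y_t - Y_s = \int_s^t U$. The crucial point is to upgrade to $\ddot Y = \mathcal{F}[Y]$ with no dissipation, which we do via energy. For every fixed $t$, $\mathcal{P}[Y^n_t] \to \mathcal{P}[Y_t]$ by \Cref{lem:forcepotential}(1), and the energies $\mathcal{H}(t_n+t)$ tend to $\mathcal{H}_\infty$. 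Hence along a.e.\ $t$ the kinetic energy $\tfrac12\norm{\dot Y^n_t}^2$ converges to $\mathcal{H}_\infty - \mathcal{P}[Y_t]$. Lower semicontinuity then forces $\tfrac12\norm{U_t}^2 + \mathcal{P}[Y_t] \leq \mathcal{H}_\infty$.

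The main obstacle is the reverse inequality, which rules out further collisions in the limit and gives strong convergence of velocities. We would argue as follows. Limits of sticky solutions for $c$-convex $W$ are still sticky, by Hynd's stability: quantitative stickiness $|Y^n_t(a)-Y^n_t(b)|/\sinh(\sqrt c(t+t_n))$ is monotone and passes to the limit. The dissipated energy between times $t_n+s$ and $t_n+t$ is $\mathcal{H}(t_n+s)-\mathcal{H}(t_n+t) \to 0$. The limit is therefore a sticky solution, on $[-T,T]$, whose energy balance has zero defect, so the inequality above is an equality for a.e.\ $t$. Energy equality with weak convergence gives $\dot Y^n_t \to U_t$ strongly in $L^2$. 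Conservation of energy for a sticky flow also means no mass merges along new intervals, so the conditional expectation acts trivially on $\int_s^t\mathcal{F}[Y_\tau]\,d\tau$, yielding $U_t - U_s = \int_s^t \mathcal{F}[Y_\tau]\,d\tau$. Since $\tau\mapsto\mathcal{F}[Y_\tau]$ is continuous into $L^2$ by \Cref{lem:forcepotential}(2), we get $Y \in C^2$ with $\ddot Y = \mathcal{F}[Y]$. \Cref{lem:forcepotential}(3) then gives constant Hamiltonian, equal to $\mathcal{H}_\infty$, at every $t\in\mathbb{R}$. Finally, $t_n\in G$ together with the strong convergence at $t=0$ (using right-continuity of $\dot N$ at points of $G$) gives $\dot N_{t_n}\to \dot Y_0$.
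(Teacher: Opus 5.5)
\textbf{Genuine gap.} The gap is at the step you flag as the main obstacle, the reverse inequality $\tfrac12\norm{U_t}^2+\mathcal{P}[Y_t]\ge\mathcal{H}_\infty$.

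Vanishing of $\mathcal{H}(t_n+s)-\mathcal{H}(t_n+t)$ and stickiness of the limit only control the \emph{collisional} loss. Your own conditional expectation identity, read as an orthogonal projection, gives by Pythagoras
\[
\norm{\dot N_{t_n+t}-n_{t_n+s,t_n+t}}^2=2\bigl(\mathcal{H}(t_n+s)-\mathcal{H}(t_n+t)\bigr),\qquad n_{t_n+s,t_n+t}:=\dot N_{t_n+s}+\int_s^t\mathcal{F}[N_{t_n+\tau}]\,d\tau .
\]
This says nothing about kinetic energy lost in the weak limit through oscillations of $a\mapsto\dot N_{t_n+t}(a)$. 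Combining this identity with weak convergence of the velocities, one finds, along a subsequence and for a.e.\ $t$,
\[
\lim_{n\to\infty}\norm{\dot N_{t_n+t}-\dot Y_t}^2=2\bigl(\mathcal{H}_\infty-E_Y\bigr),
\]
where $E_Y$ is the energy of the limit, which is in fact constant in $t$. So the defect is the same at all times, and no energy-balance or stickiness argument can show it vanishes. Your step ``energy equality with weak convergence gives strong convergence'' is therefore circular.

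The same problem affects two other steps:
\begin{itemize}
\item Your claim that the energy inequality passes to the limit between two arbitrary times by lower semicontinuity needs $\mathcal{H}^n(s)\to\mathcal{H}(s)$ at the earlier time, which is exactly the missing velocity convergence.
\item Right-continuity of $\dot N$ at points of $G$ is not uniform in $n$, so it cannot give $\dot N_{t_n}\to\dot Y_0$.
\end{itemize}

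\textbf{Missing ingredient.} You need a compactness estimate for the velocities. The paper takes it from Hynd's one-sided Lipschitz (Oleinik) bound
\[
\dot N_t(b)-\dot N_t(a)\le\frac{\sqrt c}{\tanh(\sqrt c\, t)}\bigl(N_t(b)-N_t(a)\bigr).
\]
For $t$ bounded away from $0$ this makes $\dot N_t-CN_t$ monotone. Hence $\dot N_{t_n}$ is uniformly of bounded variation and strongly precompact in $L^2(0,1)$ (\Cref{lem:strongvelocityconvergence}). This is also why $G$ is closed under integer shifts: it gives strong limits $Z_k$ of $\dot N_{t_n-k}$.

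With those strong limits in hand, the Pythagorean identity above (\Cref{lem:dissipation}) propagates strong convergence forward, $\dot N_{t_n+t}\to Z_k+\int_{-k}^t\mathcal{F}[Y_\tau]\,d\tau$ for a.e.\ $t$. This yields:
\begin{itemize}
\item $\ddot Y=\mathcal{F}[Y]$ directly, with no need to pass the conditional expectation to the limit or to argue that no new merging occurs;
\item $\dot N_{t_n}\to\dot Y_0$;
\item the identification of the constant energy of $Y$ with $\mathcal{H}_\infty$.
\end{itemize}
Monotonicity of $\mathcal{H}$ on $G$ also comes from this identity, not from lower semicontinuity.
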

    \begin{remark}
        Note that the solution operator for pressureless Euler with such a potential may not have a semigroup property. As such, \Cref{thm:shiftedlimits} must be stated in terms of limits of shifted trajectories, instead of ``restarting'' the PDE at a sequence of times. Also note that the limiting $Y$ solves an equation different from the original trajectory map solution.
    \end{remark}

    \begin{framed}
        Fix once and for all a potential $W$, a trajectory map solution $(\rho_t, v_t, X(t, \cdot))$, and $N_t$, as in \Cref{thm:shiftedlimits}. It remains to construct $G$ and establish all the conclusions of \Cref{thm:shiftedlimits}.
    \end{framed}
    By \Cref{thm:supportbound} and \Cref{corr:velocitybound}, there exists some $M > 0$ such that
    \begin{equation} \label{eqn:appendixsupportbounds}
        \mathrm{spt} (\rho_t) \subset [-M, M] \qquad \norm{v_t}_{L^\infty(\rho_t)} \leq M \qquad \forall t \geq 0.
    \end{equation}
    This is because the solutions in \cite{Hynd2020} are limits of discrete SPS, for which the uniform bounds in \Cref{sec:uniformconfinement} apply, and thus pass to the limit.

    Put $V_0 := v_0 \circ N_0$, and let $L := L_M$ be as in \Cref{lem:forcepotential}, so that $|\mathcal{F}[N_t]| \leq L$. 
    Let $\mathcal{M}$ denote the space of non-decreasing functions on $(0, 1)$ bounded in the $L^\infty$ norm by $M$. Finally, for $t \geq 0$ let $$\mathcal{K}_t := \{h \circ N_t : h \text{ Borel}, h \circ N_t \in L^2(0, 1)\},$$
    which is a closed subspace of $L^2(0, 1)$ that is isometric to $L^2(\rho_t)$, and so there is a well-defined orthogonal projection $\mathrm{Proj}_{\mathcal{K}_t}: L^2(0, 1) \rightarrow \mathcal{K}_t$.
    \begin{lemma} \label{lem:goodtimes}
        There exists a set $G \subset (0, \infty)$ of full measure such that if $t \in G$ and $k \in \mathbb{Z}_{\geq 0}$ satisfying $k < t$, then $t - k \in G$. And additionally, for every $t \in G$ we have:
        \begin{itemize}
            \item $s \mapsto N_s$ is differentiable at $s=t$ as a map into $L^2(0, 1)$ with $\dot{N}_t = v_t \circ N_t$, and $\norm{\dot{N}_t}_{L^\infty} \leq M$
            \item $\dot{N}_t = \mathrm{Proj}_{\mathcal{K}_t} \left[ V_0 + \int_0^t \mathcal{F}[N_\tau] d\tau \right]$
            \item For almost every $a, b$ with $a < b$ we have $$\dot{N}_t(b) - \dot{N}_t(a) \leq \frac{\sqrt{c}}{\tanh(\sqrt{c} t)} (N_t(b) - N_t(a)).$$
        \end{itemize}
    \end{lemma}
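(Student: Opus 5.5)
The plan is to build $G$ in two stages. First take a full-measure set $G_0 \subset (0,\infty)$ on which the three bulleted properties hold. Then shrink $G_0$ to make it closed under backward integer shifts. Precisely, put $B := (0,\infty)\setminus G_0$, which is Lebesgue-null, and define
\[
G := \{ t \in G_0 : t-k \in G_0 \text{ for every } k \in \mathbb{Z}_{\geq 0} \text{ with } k<t \}.
\]
Then $(0,\infty)\setminus G \subset \bigcup_{k\geq 0}(B+k)$, a countable union of null sets, so $G$ has full measure. Closure under shifts is immediate: if $t\in G$ and $k<t$, then for every integer $j\ge 0$ with $j<t-k$ we have $(t-k)-j = t-(k+j)\in G_0$, so $t-k\in G$. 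The real content is therefore producing $G_0$.

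For the first bullet, I would use \eqref{eqn:appendixsupportbounds}. The velocity bound $\norm{v_t}_{L^\infty(\rho_t)}\le M$ passes to the limit from the discrete SPS and makes $t\mapsto X(t,\cdot)$ $M$-Lipschitz into $L^2(\rho_0)$. Indeed, this holds for the discrete approximations, since $|\gamma_i(t)-\gamma_i(s)|\le M|t-s|$, and survives the limit in \cite{Hynd2020}. Since $N_t = X(t,N_0)$ and $(N_0)_\#\chi=\rho_0$, the map $t\mapsto N_t$ is $M$-Lipschitz into $L^2(0,1)$, and in fact into $L^\infty$ with the same constant. Lipschitz maps into a Hilbert space are differentiable a.e., by the Radon--Nikodym property. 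At such points the difference quotients are bounded by $M$ in $L^\infty$, so the limit satisfies $\norm{\dot N_t}_{L^\infty}\le M$. The identification $\dot N_t = v_t\circ N_t$ for a.e.\ $t$ comes from the trajectory equation $\dot X(t,\cdot)=v_t(X(t,\cdot))$ $\rho_0$-a.e.\ in \cite[Theorem 1.1]{Hynd2020}, composed with $N_0$.

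For the second bullet, I would invoke Hynd's conditional-expectation formula, the continuum limit of \eqref{eq:cond-expect}. It states that for a.e.\ $t$,
\[
\dot X(t,\cdot) = \mathbb{E}_{\rho_0}\Big[ v_0 - \int_0^t (W'*\rho_\tau)(X(\tau,\cdot))\,d\tau \,\Big|\, X(t,\cdot)\Big].
\]
Pulling back by $N_0$ converts conditional expectation with respect to $\sigma(X_t)$ under $\rho_0$ into orthogonal projection onto $\{h\circ X_t\circ N_0\} = \mathcal{K}_t$ in $L^2(0,1)$. It also converts $-(W'*\rho_\tau)(N_\tau(a))$ into $\mathcal{F}[N_\tau](a)$, since $\rho_\tau = (N_\tau)_\#\chi$. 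One checks that the Bochner integral $\int_0^t\mathcal{F}[N_\tau]\,d\tau$ is well defined using \Cref{lem:forcepotential}(1)--(2) and the continuity of $\tau\mapsto N_\tau$. Equivalently, the projection identity can be read off from \eqref{eq:cond-expect-N}, which expresses that the right-hand side and $\dot N_t$ have the same inner products against every element of $\mathcal{K}_t$, together with $\dot N_t = v_t\circ N_t\in\mathcal{K}_t$.

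For the third bullet, I would use the quantitative Oleinik/stickiness estimate of \cite{Hynd2019,Hynd2020} for $c$-convex potentials. It states that for a.e.\ $t$ and $\rho_t\otimes\rho_t$-a.e.\ $x<y$,
\[
v_t(y)-v_t(x)\le \frac{\sqrt c}{\tanh(\sqrt c t)}(y-x),
\]
obtained as the derivative form of the $\sinh$-monotonicity of $|\gamma_i-\gamma_j|$, which is stable under the limit. Composing with the nondecreasing map $N_t$, and using $(N_t\times N_t)_\#(\chi\otimes\chi)=\rho_t\otimes\rho_t$, gives the inequality for a.e.\ $a<b$. When $N_t(a)=N_t(b)$, both sides vanish because $\dot N_t = v_t\circ N_t$.

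I expect the main obstacle to be this last step. Hynd's estimate is naturally stated for a fixed pair of Lagrangian labels, or on the support, so care is needed to transfer it to an a.e.\ statement on $(0,1)^2$. One must also ensure that a single exceptional null set of times serves all pairs simultaneously. I would handle this by first fixing the $\rho_0$-a.e.\ representative of $v_t\circ X_t$ at good times, and then applying Fubini on $(0,1)^2$. Finally, $G_0$ is the intersection of the three full-measure sets of times.
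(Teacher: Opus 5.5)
Your proposal is correct and follows the paper's proof. The paper likewise takes $G_0$ to be the full-measure set of times at which the three properties hold, citing \cite[Theorem 1.1, Equation (1.3), Corollary 4.4]{Hynd2020}, and closes it under backward integer shifts via $G := \{t \in G_0 : t-k \in G_0 \ \forall k \in \mathbb{Z}\cap(0,t)\}$, exactly as you do; your arguments for the individual bullets (a.e.\ differentiability of the Lipschitz map, conditional expectation as projection onto $\mathcal{K}_t$, and pulling the Oleinik bound back through $N_t$) simply spell out what the paper leaves to that citation.
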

    \begin{proof}
        \thmendspace
        Let $G_0$ be the set of $t > 0$ satisfying the last three properties. Then $G_0$ has full measure, by \cite[Theorem 1.1, Equation (1.3), Corollary 4.4]{Hynd2020}. We just need to close under integer shifts. To that end, define $$G := \{t \in G_0 : t - k \in G_0 \quad \forall k \in \mathbb{Z} \cap (0, t)\}.$$
        Then $G$ is full measure and satisfies all the requirements.
    \end{proof}
    Going forward, we will fix $G$ to be the set of good times, as in \Cref{lem:goodtimes}. 

    We know that $t \mapsto N_t$ is locally Lipschitz into $L^2(0, 1)$, and thus for all $s, t \in G$ we have $N_t - N_s = \int_s^t \dot{N}_\tau d \tau$. So by earlier velocity bounds, this also gives 
    \begin{equation} \label{eqn:lipschitzofN}
        \norm{N_t - N_s}_{L^2(0, 1)} \leq M |t - s| \qquad \forall s, t \in G.
    \end{equation}
    
    Also, for $t \in G$ we have $\mathcal{H}(t) = \frac{1}{2} \norm{\dot{N}_t}^2 + \mathcal{P}[N_t]$, as $(N_t)_\# \chi = \rho_t$ and $\dot{N}_t = v_t \circ N_t$. This will be of use later.

    We first show that along sequences in $G$ (i.e. ``generic'' sequences) we can obtain \emph{strong} subsequential limits of the velocity.
    \begin{lemma} \label{lem:strongvelocityconvergence}
        For every sequence $\{t_n\} \subset G$ with $t_n \rightarrow \infty$, up to a subsequence $\dot{N}_{t_n}$ converges strongly in $L^2(0, 1)$.
    \end{lemma}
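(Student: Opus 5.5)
The plan is to use the Oleinik-type one-sided bound from \Cref{lem:goodtimes} to write $\dot{N}_{t_n}$ as a bounded monotone function plus a multiple of $N_{t_n}$. Then Helly's selection theorem applies to both pieces, and dominated convergence upgrades pointwise convergence to strong $L^2$ convergence. Since $t_n \to \infty$, we may discard finitely many terms and assume $t_n \geq 1$ for all $n$. Because $s \mapsto \frac{\sqrt{c}}{\tanh(\sqrt{c}s)}$ is nonincreasing, the third item of \Cref{lem:goodtimes} then holds with the uniform constant $K := \frac{\sqrt{c}}{\tanh(\sqrt{c})}$. That is, for every $n$ and almost every $a < b$,
\[
\dot{N}_{t_n}(b) - K N_{t_n}(b) \leq \dot{N}_{t_n}(a) - K N_{t_n}(a).
\]

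\textbf{Step 1 (positions).} Each $N_{t_n}$ is nondecreasing with $\norm{N_{t_n}}_{L^\infty} \leq M$ by \eqref{eqn:appendixsupportbounds}. By Helly's selection theorem we pass to a subsequence along which $N_{t_n} \to N_\infty$ pointwise on $(0,1)$, except possibly on a countable set. Hence the convergence holds almost everywhere.

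\textbf{Step 2 (monotone remainder).} Set $D_n := \dot{N}_{t_n} - K N_{t_n}$. By \Cref{lem:goodtimes}, $\norm{\dot{N}_{t_n}}_{L^\infty} \leq M$, so $\norm{D_n}_{L^\infty} \leq (1+K)M$. By the display above, $D_n(b) \leq D_n(a)$ for almost every pair $a<b$.

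The one technical point is to replace $D_n$ by a genuinely nonincreasing representative. To do this, take the set of $a$ for which the inequality holds for almost every $b > a$ and almost every $b < a$; by Fubini this set has full measure. On it, define
\[
\tilde{D}_n(a) := \operatorname{ess\,sup}_{b > a} D_n(b).
\]
One checks that $\tilde{D}_n$ is nonincreasing. It also agrees with $D_n$ at its Lebesgue points. Indeed, at such a point $a$, for almost every $b>a$ we have $D_n(b) \leq D_n(a)$, and averaging $D_n$ over small intervals just to the right of $a$ forces $\operatorname{ess\,sup}_{b>a} D_n(b)$ to equal $D_n(a)$. Alternatively, one can mollify, use that the mollified functions are nonincreasing, and pass to the a.e. limit. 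Either way, $D_n$ has a nonincreasing representative bounded by $(1+K)M$.

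Applying Helly again, a further subsequence gives $D_n \to D_\infty$ almost everywhere.

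\textbf{Step 3 (conclusion).} Combining Steps 1 and 2, $\dot{N}_{t_n} = D_n + K N_{t_n} \to D_\infty + K N_\infty$ almost everywhere on $(0,1)$. The functions are uniformly bounded by $M$, so dominated convergence gives convergence in $L^2(0,1)$.

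I expect the main obstacle to be the representative issue in Step 2: the Oleinik inequality only holds for almost every pair $(a,b)$, and Helly needs everywhere-monotone functions. Everything else is routine compactness. The key structural input is that $t_n \geq 1$ makes the one-sided Lipschitz constant uniform, which is exactly where $c$-convexity is used.
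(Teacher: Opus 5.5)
Your proof is correct and takes essentially the same route as the paper. Both arguments use a uniform Oleinik constant once $t_n$ is bounded away from $0$, subtract $C N_{t_n}$ to obtain uniformly bounded monotone functions, and extract a convergent subsequence; the paper does this through uniform total-variation bounds and the compact embedding $\mathrm{BV}(0,1)\hookrightarrow L^1(0,1)$ followed by the $L^\infty$ bound, whereas you use Helly's theorem and dominated convergence. Your essential-supremum construction of a genuinely nonincreasing representative also spells out a point about a.e.\ monotonicity that the paper passes over silently.
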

    \begin{proof}
        \thmendspace
        After perhaps discarding finitely many terms, we can assume that $\{t_n\}$ is bounded strictly away from zero, and so there exists some $C > 0$ such that for all $n$ and almost every $a, b$ with $a < b$ we have by \Cref{lem:goodtimes} that $$\dot{N}_{t_n}(b) - \dot{N}_{t_n}(a) \leq C (N_{t_n}(b) - N_{t_n}(a)).$$
        In particular, if we define $Z_n := \dot{N}_{t_n} - C \cdot N_{t_n}$ it is clear that all of the $Z_n$ are monotone on $(0, 1)$.
        And moreover, $\norm{Z_n}_{L^\infty} \leq (1 + C) M$. 
        Thus, $\sup_n \mathrm{TV}(Z_n) < \infty$, and as $N_{t_n} \in \mathcal{M}$ we arrive at $\sup_n \mathrm{TV}(\dot{N}_{t_n}) < \infty$. 
        By the compactness of the embedding $\mathrm{BV}(0, 1) \hookrightarrow L^1(0, 1)$, this implies that up to a subsequence $\dot{N}_{t_n}$ converges strongly in $L^1(0, 1)$. 
        By the $L^\infty$ bound on $\dot{N}_{t_n}$, this upgrades to $L^2$ convergence, i.e. $\dot{N}_{t_n}$ converges strongly in $L^2(0, 1)$.
    \end{proof}

    We will take limits of $t \mapsto N_{t_n + t}$ for $t_n \rightarrow \infty$. 
    To that end, for $s \in G$ and $t \geq s$ we put $$n_{s, t} := \dot{N}_s + \int_s^t \mathcal{F}[N_\tau] d\tau.$$

    Note that $n_{s, t}$ and $\dot{N}_t$ differ only in that $n_{s, t}$ evolves without the projections. It would be helpful to quantify how much of a difference this projection makes; to that end we have the following identity:
    \begin{lemma} \label{lem:dissipation}
        For $s, t \in G$ with $s < t$ we have $\dot{N}_t = \mathrm{Proj}_{\mathcal{K}_t} n_{s, t}$ and $$\norm{n_{s, t} - \dot{N}_t}^2 = 2 \left( \mathcal{H}(s) - \mathcal{H}(t) \right).$$
        This also implies $\mathcal{H}$ is nonincreasing on $G$, and that $\mathcal{H}$ is constant if and only if $n_{s, t} = \dot{N}_t$ for all such $s, t$.
    \end{lemma}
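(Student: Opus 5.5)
We prove \Cref{lem:dissipation} in two steps: first identify $\dot N_t$ as the projection of $n_{s,t}$, then compute $\|n_{s,t}-\dot N_t\|^2$ by Pythagoras together with an energy identity.

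\textbf{Step 1: the projection identity.} By \Cref{lem:goodtimes}, for $s,t\in G$ we have
\[
\dot N_t=\mathrm{Proj}_{\mathcal{K}_t}\Bigl[V_0+\int_0^t\mathcal{F}[N_\tau]\,d\tau\Bigr],\qquad \dot N_s=\mathrm{Proj}_{\mathcal{K}_s}\Bigl[V_0+\int_0^s\mathcal{F}[N_\tau]\,d\tau\Bigr].
\]
Write $A_s:=V_0+\int_0^s\mathcal{F}[N_\tau]d\tau$, so that $A_t=A_s+\int_s^t\mathcal{F}[N_\tau]d\tau$ and $n_{s,t}=\dot N_s+\int_s^t\mathcal{F}[N_\tau]d\tau$. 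Hence $A_t-n_{s,t}=A_s-\dot N_s$, which is orthogonal to $\mathcal{K}_s$.

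The key structural fact is the nesting $\mathcal{K}_t\subset\mathcal{K}_s$ for $s\le t$. This holds because $N_t=X(t,N_0)$ factors through $N_s$ via the Lipschitz maps $f_{s,t}$ of \cite{Hynd2020}: $N_t=f_{s,t}\circ N_s$, so $h\circ N_t=(h\circ f_{s,t})\circ N_s$. (Physically, sticky particles that have merged remain merged.) It follows that $A_s-\dot N_s\perp\mathcal{K}_t$, and therefore
\[
\mathrm{Proj}_{\mathcal{K}_t}n_{s,t}=\mathrm{Proj}_{\mathcal{K}_t}A_t=\dot N_t.
\]

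\textbf{Step 2: the energy identity.} By Pythagoras,
\[
\|n_{s,t}-\dot N_t\|^2=\|n_{s,t}\|^2-\|\dot N_t\|^2 .
\]
It therefore suffices to show
\[
\tfrac12\|n_{s,t}\|^2=\tfrac12\|\dot N_s\|^2+\mathcal P[N_s]-\mathcal P[N_t].
\]
Differentiate in $t$ to get $\frac{d}{dt}\tfrac12\|n_{s,t}\|^2=\langle n_{s,t},\mathcal F[N_t]\rangle$. Since $\mathcal F[N_t]=\tilde h\circ N_t\in\mathcal K_t$ (it is a function of $N_t$, namely $-(W'*\rho_t)\circ N_t$), this equals $\langle \mathrm{Proj}_{\mathcal K_t}n_{s,t},\mathcal F[N_t]\rangle=\langle\dot N_t,\mathcal F[N_t]\rangle$, which by \Cref{lem:forcepotential}(3) is $-\frac{d}{dt}\mathcal P[N_t]$ for a.e.\ $t$. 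Integrating from $s$ to $t$ is legitimate: $t\mapsto n_{s,t}$ is Lipschitz, and $\mathcal P[N_\cdot]$ is Lipschitz by \Cref{lem:forcepotential}(1) and \eqref{eqn:lipschitzofN}. Using $\mathcal H(\tau)=\tfrac12\|\dot N_\tau\|^2+\mathcal P[N_\tau]$ on $G$, we obtain $\|n_{s,t}-\dot N_t\|^2=2(\mathcal H(s)-\mathcal H(t))$.

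\textbf{Consequences.} Since the left side is nonnegative, $\mathcal H$ is nonincreasing on $G$. Moreover, $\mathcal H(s)=\mathcal H(t)$ exactly when $n_{s,t}=\dot N_t$, which gives the stated characterization of constancy.

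The main obstacle is the nesting $\mathcal K_t\subset\mathcal K_s$. If the factorization through $f_{s,t}$ is not directly available for the limiting solution, the fallback is to argue at the discrete level and pass to the limit: each $N_t^n$ is a function of $N_s^n$ because of stickiness, and one would transfer this to the limit using the Lipschitz estimate from $c$-convexity in \cite{Hynd2019}.
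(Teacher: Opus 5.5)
Your proposal is correct and follows essentially the same route as the paper. You use stickiness to get $\mathcal{K}_t\subset\mathcal{K}_s$ and hence $\dot N_t=\mathrm{Proj}_{\mathcal K_t}n_{s,t}$. You then derive the energy identity for $\|n_{s,\tau}\|^2$ from $\mathcal F[N_\tau]\in\mathcal K_\tau$, the projection identity at a.e.\ intermediate $\tau\in G$, and \Cref{lem:forcepotential}(3), and conclude with Pythagoras. The only cosmetic differences are these: the paper cites \cite[Proposition 4.1(iii)]{Hynd2020} directly for a Borel factorization $N_t=\psi\circ N_s$, so your discrete-level fallback is unnecessary, and it obtains $C^1$ regularity of $\tau\mapsto n_{s,\tau}$ from \Cref{lem:forcepotential}(2) rather than working with mere Lipschitz continuity.
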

    \begin{proof}
        \thmendspace
        Write $\tilde{V}_t := V_0 + \int_0^t \mathcal{F}[N_\tau] d\tau$. In particular, note that $\dot{N}_t = \mathrm{Proj}_{\mathcal{K}_t} \tilde{V}_t$ and $\tilde{V}_t = \tilde{V}_s + \int_s^t \mathcal{F}[N_\tau] d\tau$.

        First, note that by stickiness (\cite[Proposition 4.1(iii)]{Hynd2020}) there is some Borel function $\psi$ such that $N_t = \psi \circ N_s$, and thus $\mathcal{K}_t \subset \mathcal{K}_s$. So $\mathrm{Proj}_{\mathcal{K}_t} = \mathrm{Proj}_{\mathcal{K}_t} \mathrm{Proj}_{\mathcal{K}_s}$. And thus $$\dot{N}_t = \mathrm{Proj}_{\mathcal{K}_t} \tilde{V}_t = \mathrm{Proj}_{\mathcal{K}_t} \left( \tilde{V}_s + \int_s^t \mathcal{F}[N_\tau] d \tau \right) = \mathrm{Proj}_{\mathcal{K}_t} \left( \dot{N}_s + \int_s^t \mathcal{F}[N_\tau] d \tau \right) = \mathrm{Proj}_{\mathcal{K}_t} n_{s, t}.$$
        
        By \Cref{lem:forcepotential}(2) and \eqref{eqn:lipschitzofN}, $\tau \mapsto n_{s,\tau}$ is $C^1$ with derivative $\mathcal{F}[N_\tau]$, and we get $\norm{n_{s, t}}^2 = \norm{\dot{N}_s}^2 + 2 \int_s^t \langle n_{s, \tau}, \mathcal{F}[N_\tau] \rangle d\tau$.
        For a.e. $\tau \in (s, t)$ we have $\tau \in G$, and $\mathcal{F}[N_\tau] = -(W' * \rho_\tau) \circ N_\tau$ which is clearly in $\mathcal{K}_\tau$. So by the first part of this lemma (applied to the pair $s < \tau$) and \Cref{lem:forcepotential}(3), $$\langle n_{s, \tau}, \mathcal{F}[N_\tau] \rangle = \langle \mathrm{Proj}_{\mathcal{K}_\tau} n_{s, \tau}, \mathcal{F}[N_\tau] \rangle = \langle \dot{N}_\tau, \mathcal{F}[N_\tau] \rangle = -\frac{d}{d\tau} \mathcal{P}[N_\tau].$$
        Since $\tau \mapsto \mathcal{P}[N_\tau]$ is Lipschitz (by \Cref{lem:forcepotential}(1) and \eqref{eqn:lipschitzofN}), the fundamental theorem of calculus gives $$\norm{n_{s, t}}^2 = \norm{\dot{N}_s}^2 - 2 (\mathcal{P}[N_t] - \mathcal{P}[N_s]).$$
        Finally, since $\dot{N}_t = \mathrm{Proj}_{\mathcal{K}_t} n_{s, t}$, the Pythagorean theorem gives
        \[\norm{n_{s, t} - \dot{N}_t}^2 = \norm{n_{s, t}}^2 - \norm{\dot{N}_t}^2 = 2 \left( \mathcal{H}(s) - \mathcal{H}(t) \right). \qedhere\]
    \end{proof}
    We are now in a position to prove the main result of this appendix. The main difference between this and the proof of \Cref{thm:existence} is that here we work at the level of trajectory maps (which are available in the $c$-convex setting), and here we do additional work to recover in the limit \emph{eternal} solutions on $\mathbb{R}$ (as opposed to just $[0, \infty)$) that solve a simpler PDE.
    \begin{proof}[Proof of \Cref{thm:shiftedlimits}]
        \thmendspace
        Fix a sequence $\{t_n\} \subset G$ with $t_n \rightarrow \infty$. 

        Recall that we require (by convention) that $W \geq 0$, which we can always do without loss of generality. And so $\mathcal{H} \geq 0$. Thus $\mathcal{H}_\infty = \inf_{t \in G} \mathcal{H}(t)$ exists and is finite.
        
        \textit{Step 1}. We will now construct $Y$. First, by construction of $G$ (and the selection of $\{t_n\} \subset G$) we have ensured that for every $k \in \mathbb{Z}_{\geq 0}$ we have $t_n - k \in G$ whenever $t_n > k$. And so by \Cref{lem:strongvelocityconvergence} and a diagonal argument, we can pass to a subsequence of $\{t_n\}$ (which we will suppress, i.e. still refer to as $\{t_n\}$) so that there exists, for each $k \in \mathbb{Z}_{\geq 0}$, functions $Z_k \in L^2(0, 1)$ such that $\dot{N}_{t_n - k} \rightarrow Z_k$ strongly in $L^2(0, 1)$.

        Now, recall the fact that $N_{t_n + t} \in \mathcal{M}$ and $\norm{N_{t_n + t} - N_{t_n + s}} \leq M |t - s|$ (where $M$ is defined in \Cref{eqn:appendixsupportbounds}). So by the Arzel\`a--Ascoli theorem for Lipschitz maps applied to $[-T, T]$ for each $T \in \mathbb{N}$, followed by a diagonal argument to send $T \rightarrow \infty$, there is a subsequence (which we will again not notate) and a $M$-Lipschitz $Y: \mathbb{R} \rightarrow \mathcal{M}$ such that $$N_{t_n + \bullet} \rightarrow Y \qquad \text{in } C([-T, T]; L^2(0, 1)) \quad \forall T.$$
        Note that Arzel\`a--Ascoli applies here because $\mathcal{M}$ is a compact subset of $L^2(0, 1)$.
        
        Going forward, we will exclusively work with the subsequence that has the properties outlined above.

        \textit{Step 2}. We will now show that $Y \in C^2(\mathbb{R}; L^2(0, 1))$, and moreover that $Y$ satisfies the equation $\ddot{Y}_t = \mathcal{F}[Y_t]$.
        By \Cref{lem:forcepotential}(2), we know that $\tau \mapsto \mathcal{F}[Y_\tau]$ is continuous and $\mathcal{F}[N_{t_n + \tau}] \rightarrow \mathcal{F}[Y_\tau]$ in $L^2(0, 1)$ for every $\tau$. 
        For any $k \geq 0$ and $t \geq -k$ put $$\tilde{n}^n_t := n_{t_n - k, t_n + t} = \dot{N}_{t_n - k} + \int_{-k}^t \mathcal{F}[N_{t_n + \tau}] d\tau \qquad \tilde{n}_t := Z_k + \int_{-k}^t \mathcal{F}[Y_\tau] d\tau.$$
        Note that $\dot{N}_{t_n - k} \rightarrow Z_k$ \emph{strongly} in $L^2(0, 1)$, and by combining this with dominated convergence for the integral term (which applies because $|\mathcal{F}[N_{t_n + \tau}]| \leq L$), we have $\tilde{n}^n_t \rightarrow \tilde{n}_t$ in $L^2(0, 1)$ for every $t \geq -k$.

        Now let $\mathcal{S} = \bigcup_m \bigcap_{n \geq m} (G - t_n)$. As we have only countably many $t_n$, this has full measure on $\mathbb{R}$.
        For $t \in \mathcal{S}$ with $t > -k$, we can apply \Cref{lem:dissipation} to get
        $$\norm{\dot{N}_{t_n + t} - \tilde{n}^n_t}^2 = 2 \left( \mathcal{H}(t_n - k) - \mathcal{H}(t_n + t) \right) \leq 2 \left( \mathcal{H}(t_n - k) - \mathcal{H}_\infty \right) \rightarrow 0,$$
        and so $\dot{N}_{t_n + t} \rightarrow \tilde{n}_t$ strongly in $L^2(0, 1)$. 
        Clearly, $N_{t_n + t} - N_{t_n - k} = \int_{-k}^t \dot{N}_{t_n + \tau} d\tau$ with $\norm{\dot{N}_{t_n + \tau}} \leq M$ for $\tau \in \mathcal{S}$ (by \eqref{eqn:lipschitzofN}), so another application of dominated convergence gives $$Y_t - Y_{-k} = \int_{-k}^t \tilde{n}_\tau d\tau \qquad \forall t \geq -k.$$
        By construction, $\tilde{n} \in C^1([-k, \infty); L^2(0, 1))$ with $\dot{\tilde{n}}_t = \mathcal{F}[Y_t]$. By earlier analysis, we have now shown that $$\dot{Y} = \tilde{n} \qquad \ddot{Y}_t = \mathcal{F}[Y_t] \text{ on } [-k, \infty).$$ 
        And as $k$ is arbitrary, $Y \in C^2(\mathbb{R}; L^2(0, 1))$ and $\ddot{Y}_t = \mathcal{F}[Y_t]$ for all $t \in \mathbb{R}$. 
        In particular, taking $k=0$ and $t=0$ gives $\dot{Y}_0 = Z_0 = \lim_{n \rightarrow \infty} \dot{N}_{t_n}$. Thus, we have shown that $Y$ is an eternal solution. 

        \textit{Step 3.} Finally, we show that $Y$ is constant-Hamiltonian solution. By \Cref{lem:forcepotential}(3) and the results of the earlier step, $$\frac{d}{dt} \left( \frac{1}{2} \norm{\dot{Y}_t}^2 + \mathcal{P}[Y_t] \right) = \langle \ddot{Y}_t, \dot{Y}_t \rangle - \langle \mathcal{F}[Y_t], \dot{Y}_t \rangle = 0.$$
        So the Hamiltonian of $Y$ is constant, and we just need to check that the energy at $t=0$ is equal to the limiting Hamiltonian along $\{t_n\}$. Well, because velocities at $t=0$ converge strongly, we can apply \Cref{lem:forcepotential}(1) to get: \[\frac{1}{2} \norm{\dot{Y}_0}^2 + \mathcal{P}[Y_0] = \lim_{n \rightarrow \infty} \left( \frac{1}{2} \norm{\dot{N}_{t_n}}^2 + \mathcal{P}[N_{t_n}] \right) = \lim_{n \rightarrow \infty} \mathcal{H}(t_n) = \mathcal{H}_\infty.\qedhere\]
    \end{proof}
    It may be interesting to use the above result to analyze limiting behavior of SPS. For example, the following result is immediate in light of previous work:
    \begin{corollary} \label{corr:convergencetodelta}
        Consider a potential $W$ and a trajectory map solution $(\rho_t, v_t, X(t, \cdot))$ compatible with \Cref{defn:trajectorymapsoln}. Assume additionally that $W'(x) > 0$ for all $x \in (0, \infty)$. Then $$W_2(\rho_t, \delta_0) \rightarrow 0 \qquad \text{as } t \rightarrow \infty.$$
    \end{corollary}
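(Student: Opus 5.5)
The plan is to argue by contradiction via \Cref{thm:shiftedlimits}. Suppose $W_2(\rho_t,\delta_0)\not\to 0$. Since $t\mapsto N_t$ is $M$-Lipschitz into $L^2(0,1)$ by \eqref{eqn:lipschitzofN}, so is $t\mapsto W_2(\rho_t,\delta_0)=\norm{N_t}$. We may therefore pick $\{t_n\}\subset G$ with $t_n\to\infty$ and $\norm{N_{t_n}}\ge\varepsilon>0$. Passing to a subsequence, \Cref{thm:shiftedlimits} gives an eternal solution $Y$ with $N_{t_n+\bullet}\to Y$ locally uniformly in $L^2$ and Hamiltonian $\equiv\mathcal{H}_\infty$. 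By conservation of the center of mass, $\int_0^1 Y_t=0$ for all $t$. Also $\norm{Y_0}\ge\varepsilon$, so $Y_0$ is a nonconstant nondecreasing function.

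The first key step is to show that the limit dynamics carries no dissipation. On the eternal solution I would use the virial-type quantity $\frac{d^2}{dt^2}\frac12\norm{Y_t}^2=\norm{\dot Y_t}^2+\langle \mathcal{F}[Y_t],Y_t\rangle$. By oddness of $W'$,
\[\langle \mathcal{F}[Y_t],Y_t\rangle=-\tfrac12\iint W'(Y_t(a)-Y_t(b))(Y_t(a)-Y_t(b))\,da\,db\le 0.\]
This is strictly negative unless $Y_t$ is a.e. constant, i.e. $Y_t=0$, because $xW'(x)>0$ for $x\neq0$. This alone does not close the argument, since $\norm{\dot Y}^2$ has the opposite sign.

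I would instead exploit stickiness: $\mathcal{H}$ is constant along $Y$, but genuine collisions in the original flow dissipate energy. The cleanest route is to use the known result (Hynd's collapse for strictly attractive $c$-convex $W$), which is what "immediate in light of previous work" suggests. Alternatively, one can argue directly that $Y$ satisfies the Oleinik-type bound inherited from \Cref{lem:goodtimes}. Along the shifted times the constant $\sqrt{c}/\tanh(\sqrt{c}(t_n+t))$ tends to $\sqrt c$, so this bound holds uniformly for all $t\in\mathbb{R}$. Since $Y$ is defined for all negative times, the corresponding Oleinik bound with $t$ replaced by $t+T$ and $T\to\infty$ forces relative velocities to satisfy $\dot Y_t(b)-\dot Y_t(a)\le\sqrt c\,(Y_t(b)-Y_t(a))$.

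Next, consider the diameter $D(t)=Y_t(1^-)-Y_t(0^+)$, i.e. the extreme quantile gap, or better the gap between two fixed quantile levels $a<b$. The extreme particles feel a strictly inward total force, since $W'>0$ on $(0,\infty)$ and $Y_t$ is monotone and nonconstant. Hence $\ddot D\le-\kappa(D)<0$ whenever $D\ge\varepsilon'$, with $\kappa$ continuous and positive by compactness ($Y_t\in\mathcal{M}$). Combined with the uniform velocity bound $M$, a function on all of $\mathbb{R}$ that is bounded, has bounded derivative, and is uniformly concave while $D\ge\varepsilon'$ cannot stay above $\varepsilon'$ on a long interval. This is because concavity with $\ddot D\le-\kappa$ over an interval of length $\ell$ forces $|\dot D|\gtrsim\kappa\ell$ at an endpoint, contradicting $|\dot D|\le 2M$. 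So $D(t)<\varepsilon'$ on arbitrarily long stretches. Conservation of $\mathcal{H}_\infty$, together with the identity $\mathcal{H}_\infty=\lim\mathcal{H}(t)$, then shows that $\mathcal{H}_\infty=\mathcal{P}[0]=W(0)/2$ (taking $W(0)=\min W$). Indeed, the potential energy is close to $W(0)/2$ when $D$ is small, and the kinetic part must then vanish, since otherwise the cluster spreads, contradicting the preceding concavity argument applied in both time directions.

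Once $\mathcal{H}_\infty=W(0)/2$ is established, every limit $Y$ has $\mathcal{P}[Y_0]\le W(0)/2$. Since $W(x)>W(0)$ for $x\neq0$, this forces $Y_0$ to be constant, hence $Y_0=0$, contradicting $\norm{Y_0}\ge\varepsilon$.

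The main obstacle is the step ruling out nontrivial eternal orbits, namely getting the $\ddot D$ estimate rigorous on the level of $L^2$ maps (using essential extremes of monotone functions) and pinning $\mathcal{H}_\infty$. If that proves delicate, I would fall back on citing the collapse result from previous work directly.
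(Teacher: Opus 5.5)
Your reduction is the same as the paper's. A non-collapsing sequence $t_n\in G$ gives, via \Cref{thm:shiftedlimits}, a mean-zero eternal solution $Y$ with $\|Y_0\|\ge\varepsilon$. So everything hinges on showing that a mean-zero eternal solution must vanish, and that is where the proposal has a genuine gap.

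You use concavity of the diameter $D$ only on the set $\{D\ge\varepsilon'\}$. Together with $|\dot D|\le 2M$, this shows only that $D$ dips below $\varepsilon'$ somewhere in every time interval of length about $4M/\kappa(\varepsilon')$. It does not give ``arbitrarily long stretches'' with $D<\varepsilon'$. Nor can it exclude a breathing orbit in which $D$ oscillates periodically between values below and above $\varepsilon'$.

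Your next claim is that the kinetic energy must vanish at small diameter, ``since otherwise the cluster spreads, contradicting the concavity argument in both time directions.'' This is unsupported, because the concavity estimate says nothing at times when $D<\varepsilon'$. The step could be rescued by a different mechanism. Since $Y_t$ stays nondecreasing for all $t\in\mathbb{R}$, a relative velocity $u$ across a gap smaller than $\varepsilon'$ forces a crossing, in one of the two time directions, within time $|u|/(2L)$ unless $u^2\le 4L\varepsilon'$. Nothing of this sort is in the proposal.

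There are further problems:
\begin{itemize}
\item The map $t\mapsto \operatorname{ess\,sup}Y_t-\operatorname{ess\,inf}Y_t$ is not obviously even differentiable merely from $Y\in C^2(\mathbb{R};L^2(0,1))$, as you acknowledge.
\item The fallback of citing a collapse theorem of Hynd is not available. The remark after the corollary says no such argument is known in the semiconvex setting, and ``previous work'' there refers to \Cref{thm:shiftedlimits}.
\item The Oleinik bound you derive for $Y$ is never used.
\end{itemize}

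The missing idea is that your inward-force observation gives concavity at \emph{all} times, not just when the spread is large. You need to replace the extreme quantiles by block averages, which are continuous linear functionals of $Y_t$ and hence $C^2$ in $t$. The paper takes $R(t)=2\int_{1/2}^1Y_t(a)\,da$. By oddness of $W'$ the interaction within the upper half cancels, so
\[
R''(t)=-2\int_{1/2}^1\int_0^{1/2}W'\bigl(Y_t(a)-Y_t(b)\bigr)\,db\,da\le 0,
\]
because $Y_t(a)\ge Y_t(b)$ for $a>1/2>b$ and $W'\ge 0$ on $[0,\infty)$.

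Since $Y$ is bounded and defined on all of $\mathbb{R}$, $R$ is a bounded concave function on $\mathbb{R}$, hence constant, so $R''\equiv 0$. As $W'>0$ on $(0,\infty)$, this forces $Y_t(a)=Y_t(b)$ for a.e.\ $a>1/2>b$. So $Y_t$ is constant, and it equals $0$ by the mean-zero condition. This is exactly where eternality enters, and it makes determining $\mathcal{H}_\infty$ unnecessary.
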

    \begin{proof}
        \thmendspace
        In light of \Cref{thm:shiftedlimits}, it suffices to just show that the only eternal solution $Y$ with $\int_0^1 Y_t da = 0$ for all $t$ is $Y \equiv 0$. From this, the limit of $\rho_t$ along any sequence of times in $G$ must converge to $\delta_0$. As $G$ is of full measure in $(0, \infty)$ it must also be dense, and as $t \mapsto N_t$ is Lipschitz this shows that limits of $\rho_t$ along any sequence of times converges to $\delta_0$.

        Now, let $Y$ be as above, and put $R(t) := 2 \int_{0.5}^1 Y_t da$, which is bounded and $C^2$ as $Y \in C^2(\mathbb{R}; L^2(0, 1))$. 
        Then because $\ddot{Y}_t = \mathcal{F}[Y_t]$ we can write (by oddness of $W'$) the following: $$R''(t) = -2 \int_{1/2}^1 \int_0^1 W'(Y_t(a) - Y_t(b)) \, db \, da = -2 \int_{1/2}^1 \int_0^{1/2} W'(Y_t(a) - Y_t(b)) \, db \, da \leq 0.$$
        Thus $R$ is concave and bounded on all of $\mathbb{R}$, and so $R$ is constant. In particular $R'' \equiv 0$, and as $W' > 0$ on $(0, \infty)$ this forces $Y_t(a) = Y_t(b)$ for almost every $a > 1/2$ and $b < 1/2$. So $Y_t$ is constant, hence $Y_t \equiv 0$ as it has mean zero.
    \end{proof}
    
    \begin{remark}
        There are fairly simple proofs for the above corollary by ODE comparison if $W$ is assumed to be convex, but to our knowledge none of these approaches can be easily generalized to the semiconvex setting. The key difficulty is controlling collisions (for example, the quantity $R(t)$ considered in the proof above loses concavity at the first collision time). However, in taking limits using \Cref{thm:shiftedlimits} we obtain eternal solutions for which there are no ``collisions''.
    \end{remark}

    At a higher level, \Cref{thm:shiftedlimits} provides an avenue to analyze the $\omega$-limit set of a SPS, by understanding the possible eternal solutions for each limiting energy value. In particular, stationary and periodic solutions are all eternal solutions. We will not pursue this angle here.

\section*{Acknowledgements}
This work was partially supported by the National Science Foundation under Grant Number DMS-2349040 (PI: C. Tsikkou).
	Any opinions, findings, and conclusions or recommendations expressed in this
material are those of the authors and do not necessarily reflect the views of the National Science Foundation.

\bibliographystyle{plain}
\bibliography{references}
\end{document}